\theoremstyle{plain}             
\newtheorem{theorem}{Theorem}[section]
\newtheorem{lemma}[theorem]{Lemma}
\newtheorem{definition}[theorem]{Definition}
\newtheorem{remark}[theorem]{Remark}
\newcommand{\red}[1]{\textcolor{red}{#1}}
\newcommand{\blue}[1]{\textcolor{blue}{#1}}
\newcommand{\children}{\operatorname{child}}
\newcommand{\isdef}{\mathrel{\mathrel{\mathop:}=}}
\newcommand{\defis}{\mathrel{=\mathrel{\mathop:}}}
\newcommand{\rank}{\operatorname{rank}}
\newcommand{\spann}{\operatorname{span}}
\newcommand{\de}{\operatorname{d}\!}
\newcommand{\peq}{{\text{\tt+=}}}
\newcommand\numberthis{\addtocounter{equation}{1}\tag{\theequation}}
\newcommand{\scalefig}[1]{\scalebox{0.75}{#1}}
\newcommand{\sumexpr}{\texttt{sum}}
\newcommand{\true}{\mathrm{true}}
\newcommand{\false}{\mathrm{false}}
\newcommand{\adm}{\operatorname{adm}}
\newcommand{\parent}{\operatorname{parent}}
\newcommand{\level}{\operatorname{level}}
\newcommand{\depth}{\operatorname{depth}}
\newcommand{\Csp}{C_{\operatorname{sp}}}
\newcommand{\Cid}{C_{\operatorname{id}}}
\newcommand{\tikzlrmat}[2]{
	\draw (#1,#2-0.25) rectangle (#1+0.1,#2+0.25);
	\draw (#1+0.2, #2+0.18) node {$\cdot$};
	\draw (#1+0.3,#2+0.15) rectangle (#1+0.8,#2+0.25);
	}
\newcommand{\tikzHmatS}[3]{
	\foreach \tikzHmatSx in {0,1,2,3,4} {
		\draw (#1+#3*0.5*\tikzHmatSx/4,#2-#3*0.25) -- (#1+#3*0.5*\tikzHmatSx/4,#2+#3*0.25);
		\draw (#1,#2-#3*0.25+#3*0.5*\tikzHmatSx/4) -- (#1+#3*0.5,#2-#3*0.25+#3*0.5*\tikzHmatSx/4);
	}
}
\newcommand{\tikzHmat}[2]{
	\tikzHmatS{#1}{#2}{1}
}
\newcommand{\tikzHmatbig}[2]{
	\tikzHmatS{#1}{#2}{4}
}
\newcommand{\tikzHHmat}[2]{
	\tikzHmat{#1}{#2}
	\draw (#1+0.6,#2) node {$\cdot$};
	\tikzHmat{#1+0.7}{#2}
}
\definecolor{mycolor1}{rgb}{0.000000,0.000000,0.901961}
\definecolor{mycolor2}{rgb}{0.281863,0.000000,0.620098}
\definecolor{mycolor3}{rgb}{0.563726,0.000000,0.338235}
\definecolor{mycolor4}{rgb}{0.845588,0.000000,0.056373}
\definecolor{mycolor5}{rgb}{0.901961,0.225490,0.000000}
\definecolor{mycolor6}{rgb}{0.901961,0.507353,0.000000}
\definecolor{mycolor7}{rgb}{0.901961,0.789216,0.000000}
\begin{document}
\title{On the best approximation of the hierarchical matrix product}
\author{J\"urgen D\"olz}
\address{
J\"urgen D\"olz,
Technical University of Darmstadt,
Graduate School CE,
Dolivostra\ss{}e 15, 64293 Darmstadt, Germany}
\email{doelz@gsc.tu-darmstadt.de}
\author{Helmut Harbrecht}
\address{
Helmut Harbrecht,
University of Basel,
Department of Mathematics and Computer Science,
Spiegelgasse 1, 4051 Basel, Switzerland}
\email{helmut.harbrecht@unibas.ch}
\author{Michael D. Multerer${}^\dagger$}
\address{
Michael D. Multerer, born as Michael D.~Peters,
ETH Zurich,
Department of Biosystems Science and Engineering,
Mattenstrasse 26, 4058 Basel, Switzerland}
\email{michael.peters@bsse.ethz.ch}
{\let\thefootnote\relax\footnote{${}^\dagger$~Michael D.~Multerer was born as Michael D.~Peters}}
\begin{abstract}
The multiplication of matrices is an important arithmetic operation in 
computational mathematics. In the context of hierarchical matrices, 
this operation can be realized by the multiplication 
of structured block-wise low-rank matrices, resulting in an almost linear cost.
However, the computational efficiency of the algorithm is based on a 
recursive scheme which makes the error analysis quite involved. In this 
article, we propose a new algorithmic framework for the multiplication 
of hierarchical matrices. It improves currently known implementations 
by reducing the multiplication of hierarchical matrices towards finding a suitable
low-rank approximation of sums of matrix-products. We propose several 
compression schemes to address this task. As a consequence, we are able to 
compute the best-approximation of hierarchical matrix products. A 
cost analysis shows that, under reasonable assumptions on the 
low-rank approximation method, the cost of the framework is 
almost linear with respect to the size of the matrix. Numerical 
experiments show that the new approach produces indeed the 
best-approximation of the product of hierarchical matrices for 
a given tolerance. They also show that the new multiplication 
can accomplish this task in less computation time than 
the established multiplication algorithm without error control.
\end{abstract}
\maketitle

\section{Introduction}
Hierarchical matrices, $\mathcal{H}$-matrices for short,
historically originate from the discretization 
of boundary integral operators.
They allow for a data sparse approximation in terms of a block-wise
low-rank matrix. 
As first shown in \cite{Hack1}, the major advantage of the $\mathcal{H}$-matrix
representation over other data sparse formats for non-local operators is that 
basic operations, like addition, multiplication and inversion, can be performed
with nearly linear cost.
This fact enormously stimulated the research on 
$\mathcal{H}$-matrices, see e.g.\ \cite{buchbeb,BH,GH03,GHK,HK00,Hac15} 
and the references therein, and related hierarchical matrix formats like 
HSS, see \cite{HSS2,HSS3,HSS1}, HODLR, see \cite{HODLR1,HODLR2}, and 
$\mathcal{H}^2$-matrices, cf.\ \cite{boerm,HKS}. 

The applications for $\mathcal{H}$-matrices 
are manifold: They
have been used for solving large scale algebraic matrix {R}iccati 
equations, cf.\ \cite{riccati}, for solving Lyapunov equations, cf.\ \cite{baur}, 
for preconditioning, cf.\ \cite{precond1,precond2} and for the second moment 
analysis of partial differential equations with random data, cf.\ \cite{DHP15,DHP17}, 
just to name a few. 

In this context, the matrix-matrix multiplication of 
\(\mathcal{H}\)-matrices is an essential operation. 
Based on the hierarchical block structure, the matrix-matrix multiplication is performed
in a recursive fashion. To that end, in each recursive call, the two factors are considered as 
block-matrices which are multiplied by a block-matrix product. The resulting matrix-block
is then again compressed to the $\mathcal{H}$-matrix format. To limit the computational cost,
the block-wise ranks for the compression are usually priorily
bounded by a user-defined threshold. 
For this thresholding procedure, no a-priori error estimates exist. 
This fact and the recursive structure of the matrix-matrix multiplication render the error analysis 
difficult, in particular, since there is no guarantee that intermediate results 
provide the necessary low-rank structure.

To reduce the number of these time-consuming and error-introducing truncation 
steps, different modifications have been proposed in the literature:
In \cite{Boe17}, instead of applying each low-rank update immediately to an 
$\mathcal{H}$-matrix, multiple updates are accumulated in an auxiliary low-rank 
matrix. This auxiliary matrix is propagated as the algorithm traverses the hierarchical 
structure underlying the $\mathcal{H}$-matrix. 
This greatly improves computation times, although the computational cost is not improved. 
Still, also in this approach, multiple truncation steps are performed.
Thus, it does not
lead to the best approximation of the low-rank block under consideration.

As an alternative, in \cite{DHP15}, it has been proposed to directly 
compute the low-rank approximation of the output matrix block by using 
the truncated singular value decomposition, realized by means of a Krylov 
subspace method. This requires only the implementation of matrix-vector 
multiplications and is hence easy to realize. Especially, it yields 
the best approximation of the low-rank blocks to be computed. But 
contrary to expectations, it does not increase efficiency since the 
eigensolver converges very slowly in case of a clustering of the 
eigenvalues. Therefore, computing times have not been satisfactory.

In the present article, we pick up the idea from \cite{DHP15} and provide
an algorithm that facilitates the direct computation of any matrix block 
in the product of two \(\mathcal{H}\)-matrices. This algorithm is based on
a sophisticated bookkeeping technique in combination with 
a compression based on basic matrix-vector products.
This new algorithm will naturally lead to the 
best approximation of the \(\mathcal{H}\)-matrix product
within the \(\mathcal{H}\)-matrix format. In particular, we cover the 
cases of an optimal fixed rank truncation and of an optimal adaptively chosen rank
based on a prescribed accuracy.

Our numerical experiments show that both, the fixed rank and the 
adaptive versions of the used low-rank techniques, are 
significantly more efficient than the traditional arithmetic with fixed rank. 
In particular, the numerical experiments also validate that the desired
error tolerance can indeed be reached when using the adaptive algorithms.

For the actual compression of a given matrix block, any compression technique
based on matrix-vector products is feasible. Exemplarily, we shall consider here the
adaptive cross approximation, see \cite{Beb,GTZ},
the Golub-Kahan-Lanczos bidiagonalization procedure,
see \cite{golub1965},
and the randomized range approximation, cf.\ \cite{HMT11}.
We will employ these methods to either compute approximations with fixed ranks
or with adaptively chosen ranks.
We remark that, in the fixed rank case, 
a similar algorithm for randomized range approximation, 
has successively been applied to directly compute the approximation 
of the product of two HSS or HODLR matrices in \cite{martinsson11, martinsson16}.

The rest of this article is structured as follows. In Section
\ref{sec:pre}, we briefly recall the construction and structure
of $\mathcal{H}$-matrices together with their matrix-vector
multiplication. Section \ref{sec:Hmult} is dedicated to the
new framework for the matrix-matrix multiplication. The three
example algorithms for the efficient low-rank approximation are then the topic of
Section~\ref{sec:approx}. Section~\ref{sec:cost} is concerned
with the analysis of the computational cost of the new multiplications, which shows
that the new matrix-matrix multiplication has asymptotically the same
computational cost as the
standard matrix-matrix multiplication, i.e., almost linear in 
the number of degrees of freedom. Nonetheless, as the 
numerical results in Section~\ref{sec:results} show,
the constants in the estimates are significantly lower for
the new approach, resulting in a remarkable speed-up.
Finally, concluding remarks are stated in Section
\ref{sec:conclusion}.

\section{Preliminaries}\label{sec:pre}
The pivotal idea of hierarchial matrices is to introduce a tree structure
on the cartesian product $\mathcal{I}\times\mathcal{I}$, where \(\mathcal{I}\) is a
suitable index set. 
The tree structure is then used
to identify sub-blocks of the matrix which are suitable for low-rank representation.
We follow the the monograph \cite[Chapter 5.3 and A.2]{Hac15} and first
recall a suitable definition of a tree.

\begin{definition}\label{def:prel.tree}
Let $V$ be a non-empty finite set, call it \emph{vertex set}, and let $\children$
be a mapping from $V$ into the power set $\mathcal{P}(V)$, i.e., $\children\colon
V\to\mathcal{P}(V)$. For any $v\in V$, an
element $v'$ in $\children(v)$ is called \emph{child}, whereas we call $v$ the
\emph{parent} of $v'$.
We call the structure $T(V,\children)$ a \emph{tree}, if the following properties
hold.
\begin{enumerate}
\item There is exactly one element $r\in V$ which is not a child of a vertex, i.e.,
\[
\bigcup_{v\in V}\children(v)=V\setminus\{r\}.
\]
We call this vertex the \emph{root} of the tree.
\item All $v\in V$ are \emph{successors} of $r$, i.e., there is a
$k\in\mathbb{N}_0$, such that $v\in\children^k(r)$. We define $\children^k(v)$
recursively as
\[
\children^0(v)=\{v\}\quad\text{and}\quad\children^k(v)=\bigcup_{v'\in\children^{k-1}(v)}\children(v').
\]
\item Any $v\in V\setminus\{r\}$ has exactly one parent.
\end{enumerate}
Moreover, we say that the number $k$ is the \emph{level} of $v$. The
\emph{depth} of a tree is the maximum of the levels of its vertices.
We define the set of \emph{leaves} of $T$ as
\[
\mathcal{L}(T)=\{v\in V\colon \children(v)=\emptyset\}.
\]
\end{definition}

We remark that for any $v\in T$, there is exactly one path from $r$ to $v$, see
\cite[Remark A.6]{Hac15}.

\begin{definition}\label{def:prel.clusterTree}
	Let $\mathcal{I}$ be a finite index set. The \emph{cluster tree}
	$\mathcal{T}_\mathcal{I}$ is a tree with the following properties.
	\begin{enumerate}
		\item $\mathcal{I}\in\mathcal{T}_\mathcal{I}$ is the root of the tree
		$\mathcal{T}_\mathcal{I}$,
		\item for all non-leaf elements $\tau\in\mathcal{T}_\mathcal{I}$ it
		holds
		\[
		\dot{\bigcup_{\sigma\in\children(\tau)}}\sigma =\tau,
		\]
		i.e., all non-leaf clusters are the disjoint union of their children,
		\item all $\tau\in\mathcal{T}_\mathcal{I}$ are non-empty.
	\end{enumerate}
	The vertices of the cluster tree are referred to as \emph{clusters}.
\end{definition}
To achieve almost linear cost for the following algorithms, we
shall assume that the depth of the cluster tree is bounded by
$\mathcal{O}(\log(\#\mathcal{I}))$ and that the cardinality of the leaf clusters
is bounded by $n_{\min}$. Various ways to construct a cluster tree
fulfilling this requirement along with different kinds of other properties
exist, see \cite{Hac15} and the references therein.

Obviously, by applying the second requirement of the definition recursively, it
holds $\tau\subset\mathcal{I}$ for all $\tau\in\mathcal{T}_\mathcal{I}$. Consequently, 
the leaves of the cluster tree form a partition of $\mathcal{I}$.

\begin{definition}\label{def:admissibility}
An \emph{admissibility condition} for $\mathcal{I}$ is a mapping
\[
\adm\colon\mathcal{P}(\mathcal{I})\times\mathcal{P}(\mathcal{I})\to\{\true,\false\}
\]
which is symmetric, i.e., for
$\tau\times\sigma\in\mathcal{P}(\mathcal{I})\times\mathcal{P}(\mathcal{I})$
it holds
\[
\adm(\tau,\sigma)=\adm(\sigma,\tau),
\]
and monotone, i.e., if $\adm(\tau,\sigma)=\true$, it holds
\[
\adm(\tau',\sigma')=\true,\quad\text{for all}~\tau'\in\children(\tau),\sigma'\in\children(\sigma).
\]
\end{definition}
Different kinds of admissibility exist, see \cite{Hac15} for a thorough
discussion and examples. Based on the admissibility condition and the
cluster tree, the block-cluster tree forms a partition of the index set
$\mathcal{I}\times\mathcal{I}$.

\begin{algorithm}
\caption{Construction of the block-cluster tree \(\mathcal{B}\), cf.~\cite[Definition 5.26]{Hac15}}
\label{alg:prel.buildFuN}
\begin{algorithmic}
\Function{BuildBlockClusterTree}{block-cluster $b=\tau\times\sigma$}
\If {$\adm(\tau,\sigma)=\true$}
\State $\children(b)\isdef\emptyset$
\Else
\If {$\tau$ and $\sigma$ have sons}
\State $\operatorname{sons}(b)\isdef
\{\sigma'\times\tau'\colon\tau'\in\children(\tau),\sigma'\in\children(\sigma)\}$
\For {$b'\in\children(b)$}
\State \Call{BuildBlockClusterTree}{$b'$}
\EndFor
\Else
\State $\children(b)\isdef\emptyset$
\EndIf
\EndIf
\EndFunction
\end{algorithmic}
\end{algorithm}

\begin{definition}
Given a cluster-tree $\mathcal{T}_{\mathcal{I}}$, the tree structure
$\mathcal{B}$ constructed by Algorithm~\ref{alg:prel.buildFuN} invoked
with $b=\mathcal{I}\times\mathcal{I}$ is referred to as
\emph{block-cluster tree}.
\end{definition}

For notational purposes, we write $p=\depth(\mathcal{B})$ and
refer to $\mathcal{N}$ as the set of
inadmissible leaves of $\mathcal{B}$ and call it the \emph{nearfield}.
In a similar fashion, we will refer to $\mathcal{F}$ as the set of
admissible leaves of $\mathcal{B}$ and call it the \emph{farfield}.
We remark that the depth of the block-cluster tree is bounded by the
depth of the cluster tree and that our definition of the
block-cluster tree coincides with the notion of a \emph{level-conserving
block-cluster tree} from the literature.

\begin{definition}
	For a block-cluster $b=\tau\times\sigma$  and $k\leq\min\{\#\tau,\#\sigma\}$,
	we define the set of \emph{low-rank matrices} as
	\[
	\mathcal{R}(\tau\times\sigma,k)=\big\{\mathbf{M}\in\mathbb{R}^{\tau\times\sigma}\colon\rank(\mathbf{M})\leq k\big\},
	\]
	where all elements $\mathbf{M}\in\mathcal{R}(\tau\times\sigma,k)$ are stored in
	low-rank representation, i.e.,
	\[
	\mathbf{M}=\mathbf{L}_{\mathbf{M}}\mathbf{R}_{\mathbf{M}}^{\intercal}
	\]
	for matrices $\mathbf{L}_{\mathbf{M}}\in\mathbb{R}^{\tau\times k}$ and
	$\mathbf{R}_{\mathbf{M}}\in\mathbb{R}^{\sigma\times k}$.
\end{definition}
Obviously, a matrix in $\mathcal{R}(\tau\times\sigma,k)$ requires
$k(\#\tau+\#\sigma)$ units of storage instead of \mbox{$\#\tau\cdot\#\sigma$},
which
results in a significant storage improvement if $k\ll\min\{\#\tau,\#\sigma\}$.
The same consideration holds true for the matrix-vector multiplication.

With the definition of the block-cluster tree at hand, we are 
in the position to introduce hierarchical matrices.

\begin{definition}
	Given a block-cluster tree $\mathcal{B}$, the set of \emph{hierarchical
		matrices}, in short \emph{$\mathcal{H}$-matrices}, of maximal block-rank
	$k$ is given by
	\[ 
	\mathcal{H}(\mathcal{B},k):=\Big\{{\bf H}\in\mathbb{R}^{\#\mathcal{I}\times\#\mathcal{I}}:
	\mathbf{H}|_{\tau\times\sigma}\in\mathcal{R}(\tau\times\sigma,k)\text{ for all }\tau\times\sigma\in\mathcal{F}\Big\}.
	\]
	A tree structure is induced on each element of this set by the tree
	structure of the block-cluster tree. Note that all nearfield blocks
	${\bf H}|_{\tau\times\sigma}$, $\tau\times\sigma\in\mathcal{N}$, are
	allowed to be dense matrices.
\end{definition}

The tree structure of the block-cluster tree provides the following useful
recursive block matrix structure on $\mathcal{H}$-matrices. Every
matrix block $\mathbf{H}|_{\tau\times\sigma}$, corresponding to a non-leaf
block-cluster $\tau\times\sigma$, has the structure
\begin{align}\label{eq:blockHMat}
	\mathbf{H}\big|_{\tau\times\sigma}=
	\begin{bmatrix}
		\mathbf{H}\big|_{\children(\tau)_1\times\children(\sigma)_1}&\ldots &\mathbf{H}\big|_{\children(\tau)_1\times\children(\sigma)_{\#\children(\sigma)}}\\
		\vdots&&\vdots\\
		\mathbf{H}\big|_{\children(\tau)_{\#\children(\tau)}\times\children(\sigma)_1}&\ldots&\mathbf{H}\big|_{\children(\tau)_{\#\children(\tau)}\times\children(\sigma)_{\#\children(\sigma)}}
	\end{bmatrix}.
\end{align}
If the matrix block $\mathbf{H}|_{\tau'\times\sigma'}$, $\tau'\in\children(\tau)$,
$\sigma'\in\children(\sigma)$, is a leaf of $\mathcal{B}$, the matrix block is
either a low-rank matrix, if $\tau'\times\sigma'\in\mathcal{F}$, or a dense
matrix, if $\tau'\times\sigma'\in\mathcal{N}$. If the matrix block is not
a leaf of $\mathcal{B}$, it exhibits again a similar block structure as
$\mathbf{H}|_{\tau\times\sigma}$. The required ordering of the clusters relies
on the order of the indices in the clusters. A possible block structure of an
$\mathcal{H}$-matrix is illustrated in Figure~\ref{fig:hmatrix}. Of course, the
structure may vary depending on the used cluster tree and admissibility
condition.


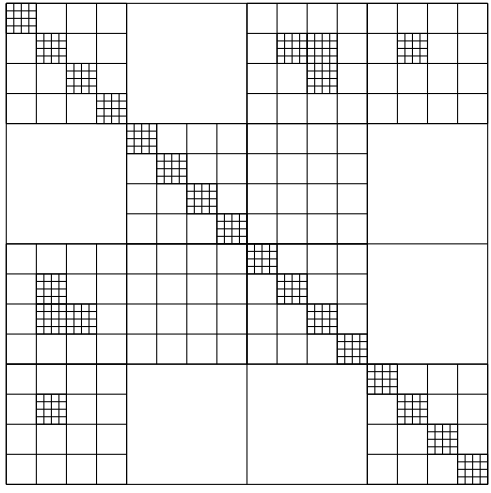
\begin{figure}
\centering
\begin{tikzpicture}[scale=0.8]
\tikzHmatS{0}{0}{16}
\foreach \x in {0,2,4,6} {
	\tikzHmatS{0+\x}{3-\x}{4}
}
\tikzHmatS{4}{3}{4}
\tikzHmatS{4}{1}{4}
\tikzHmatS{6}{3}{4}
\tikzHmatS{0}{-1}{4}
\tikzHmatS{2}{-1}{4}
\tikzHmatS{0}{-3}{4}
\foreach \x in {0,...,15} {
	\tikzHmatS{0+\x/2}{3.75-\x/2}{1}
}
\tikzHmatS{4.5}{3.25}{1}
\tikzHmatS{5}{3.25}{1}
\tikzHmatS{5}{2.75}{1}
\tikzHmatS{6.5}{3.25}{1}

\tikzHmatS{0.5}{-0.75}{1}
\tikzHmatS{0.5}{-1.25}{1}
\tikzHmatS{1}{-1.25}{1}
\tikzHmatS{0.5}{-2.75}{1}
\end{tikzpicture}
\caption{\label{fig:hmatrix}Illustration of the recursive block structure of an $\mathcal{H}$-matrix. Only the smallest blocks are allowed to be dense matrices while all
other blocks are represented by low-rank matrices.}
\end{figure}

Having the block structure \eqref{eq:blockHMat} available, an algorithm for
the matrix-vector multiplication, as listed in Algorithm~\ref{alg:prel.hVmult},
can easily be derived.
\begin{algorithm}
	\caption{\label{alg:prel.hVmult}$\mathcal{H}$-matrix-vector multiplication $\mathbf{y}\peq\mathbf{H}\mathbf{x}$, 
		see \cite[Equation (7.1)]{Hac15}}
	\begin{algorithmic}
		\Function{$\mathcal{H}$timesV}{$\mathbf{y}|_{\tau}$, $\mathbf{H}|_{\tau\times\sigma}$, $\mathbf{x}|_{\sigma}$}
		\If{$\tau\times\sigma\notin\mathcal{L}(\mathcal{B})$}
		\For{$\tau'\times\sigma'\in\children(\tau\times\sigma)$}
		\State\Call{$\mathcal{H}$timesV}{$\mathbf{y}|_{\tau'}$, $\mathbf{H}|_{\tau'\times\sigma'}$, $\mathbf{x}|_{\sigma'}$}
		\EndFor
		\Else
		\State $\mathbf{y}|_{\tau}\peq\mathbf{H}|_{\tau\times\sigma}\mathbf{x}|_{\sigma}$
		\EndIf
		\EndFunction
	\end{algorithmic}
\end{algorithm}
Note that the matrix-vector multiplication for the leaf block-clusters involves either
a dense matrix or a low-rank matrix. In accordance with \cite[Lemma 7.17]{Hac15},
the matrix-vector multiplication of an $\mathcal{H}$-matrix block
$\mathbf{H}|_{\tau\times\sigma}$ with $\mathbf{H}\in\mathcal{H}(\mathcal{B},k)$ can
be accomplished in at most
\[
N_{\mathcal{H}\cdot v}(\tau\times\sigma,k)\leq 2\Csp \max\{k,n_{\min}\}\Big(\big(\depth(\#\tau)+1\big)\#\tau+\big(\depth(\#\sigma)+1\big)\#\sigma\Big)
\]
operations, where the constant $\Csp=\Csp(\mathcal{B})$ is given as
\[
\Csp(\mathcal{B})\isdef
\max_{\tau\in\mathcal{T}_{\mathcal{I}}}\#\big\{\sigma\in\mathcal{T}_{\mathcal{I}}\colon \tau\times\sigma\in\mathcal{B}\big\}.
\]
Given a cluster $\tau\in\mathcal{T}_{\mathcal{I}}$, the quantity $\Csp$ is an upper bound on the
number of corresponding block-clusters $\tau\times\sigma\in\mathcal{B}$. Thus, it is also an
upper bound for the number of corresponding matrix blocks $\mathbf{H}|_{\tau\times\sigma}$
in the tree structure of an $\mathcal{H}$-matrix corresponding to $\mathcal{B}$.

\section{The multiplication of $\mathcal{H}$-matrices}
\label{sec:Hmult}

Instead of restating the $\mathcal{H}$-matrix multiplication in its original
form from \cite{Hack1}, we directly introduce our new framework. The connection
between the new framework and the traditional multiplication will be discussed
later in this section.

We start by introducing the
following \emph{\sumexpr-expressions}, which will simplify the presentation
and the analysis of the new algorithm.

\begin{definition}\label{def:sumexpr}
Let $\tau\times\sigma\in\mathcal{B}$.
For a finite index set $\mathcal{J}_{\mathcal{R}}$, the expression
\[
\mathcal{S}_\mathcal{R}(\tau,\sigma)=\sum_{j\in\mathcal{J}_{\mathcal{R}}}\mathbf{A}_j\mathbf{B}_j^{\intercal},
\]
is called a \sumexpr-\emph{expression of low-rank matrices}, if it is represented
and stored as a set of factorized low-rank matrices
\[
\big\{ \mathbf{A}_j\mathbf{B}_j^{\intercal}\in\mathcal{R}(\tau\times\sigma,k_j)\colon
j\in\mathcal{J}_{\mathcal{R}}\}.
\]
Similarly, for a finite index set $\mathcal{J}_{\mathcal{H}}$, the expression
\[
\mathcal{S}_\mathcal{H}(\tau,\sigma)=\sum_{j\in\mathcal{J}_{\mathcal{H}}}\mathbf{H}_j\mathbf{K}_j
\]
is called a \sumexpr-\emph{expression of $\mathcal{H}$-matrices}, if it is
represented and stored as a set of pairs of $\mathcal{H}$-matrix blocks
\[
\big\{\big(\mathbf{H}_j,\mathbf{K}_j\big)=\big(\mathbf{H}|_{\tau\times\rho_j},\mathbf{K}|_{\rho_j\times\sigma}\big)\colon\tau\times\rho_j,\rho_j\times\sigma\in\mathcal{B},j\in\mathcal{J}_{\mathcal{H}}\big\},
\]
with $\mathbf{H},\mathbf{K}\in\mathcal{H}(\mathcal{B},k)$ and
 $\mathbf{H}|_{\tau\times\rho_j}$, $\mathbf{H}|_{\tau\times\rho_j}$, $j\in\mathcal{J}_\mathcal{H}$,
being either dense matrices or providing the block-matrix structure
\eqref{eq:blockHMat}.

The expression
\[
\mathcal{S}(\tau,\sigma)=\mathcal{S}_\mathcal{R}(\tau,\sigma)+\mathcal{S}_\mathcal{H}(\tau,\sigma)
\]
is called a \sumexpr-\emph{expression} and a combination of the two
previously introduced expressions. In particular, we
require that $\mathcal{S}_\mathcal{R}$ is stored as a \sumexpr-expression of low-rank matrices
and $\mathcal{S}_\mathcal{H}$ is stored as a \sumexpr-expression of $\mathcal{H}$-matrices.
\end{definition}

\begin{figure}[htb]
\centering
\begin{tikzpicture}
\draw (0,0) node{$\mathcal{S}_\mathcal{R}(\tau,\sigma)$};
\draw (1,0) node{$=$};
\tikzlrmat{1.5}{0}
\draw (2.6,0) node {$+$};
\tikzlrmat{2.9}{0}
\draw (4.0,0) node {$+$};
\tikzlrmat{4.3}{0}
\draw (5.4,0) node {$+$};
\tikzlrmat{5.7}{0}

\draw (0,-1) node{$\mathcal{S}_\mathcal{H}(\tau,\sigma)$};
\draw (1,-1) node{$=$};
\tikzHHmat{1.5}{-1}
\draw (3.0,-1) node {$+$};
\tikzHHmat{3.3}{-1}
\draw (4.8,-1) node {$+$};
\tikzHHmat{5.1}{-1}

\draw (0,-2) node{$\mathcal{S}(\tau,\sigma)$};
\draw (1,-2) node{$=$};
\tikzlrmat{1.5}{-2}
\draw (2.6,-2) node {$+$};
\tikzHHmat{2.9}{-2}
\draw (4.4,-2) node {$+$};
\tikzHHmat{4.7}{-2}
\draw (6.2,-2) node {$+$};
\tikzlrmat{6.5}{-2}
\end{tikzpicture}
\caption{\label{fig:sumexpr}Examples for the introduced
	 \sumexpr-expressions. $\mathcal{S}_\mathcal{R}(\tau,\sigma)$ is a sum of low-rank
	 matrices, $\mathcal{S}_\mathcal{H}(\tau,\sigma)$ a sum of $\mathcal{H}$-matrix
	 products, and $\mathcal{S}(\tau,\sigma)$ a mixture of both.}
\end{figure}

Examples of \sumexpr-expressions
are illustrated in Figure~\ref{fig:sumexpr}. A \sumexpr-expressions may be considered as a kind
of a queuing system to store the sum of low-rank matrices and/or
$\mathcal{H}$-matrix products for subsequent operations.

We remark that the sum of two \sumexpr-expressions is again a \sumexpr-expression and
shall now make use of this fact to devise an algorithm for the multiplication
of $\mathcal{H}$-matrices. For simplicity, we assume that all
involved $\mathcal{H}$-matrices are built upon the same block-cluster tree
$\mathcal{B}$.

\subsection{Relation between $\mathcal{H}$-matrix products and \sumexpr-expressions}
We start with two $\mathcal{H}$-matrices $\mathbf{H},\mathbf{K}\in\mathcal{H}
(\mathcal{B},k)$ and want to represent their product
$\mathbf{L}\isdef\mathbf{H}\mathbf{K}$ in $\mathcal{H}(\mathcal{B},k)$.
To that end, we rewrite the $\mathcal{H}$-matrix product as a
\sumexpr-expression
\[
\mathbf{L}
=
\mathbf{H}\mathbf{K}
\defis
\mathcal{S}_\mathcal{H}(\mathcal{I},\mathcal{I})
\defis
\mathcal{S}(\mathcal{I},\mathcal{I}).
\]
The task is now to find a suitable low-rank approximation to
$\mathbf{L}|_{\tau\times\sigma}$ in $\mathcal{R}(\tau\times\sigma, k)$
for all admissible leafs $\tau\times\sigma$ of $\mathcal{B}$.
If $\tau\times\sigma$ is a
child of the root, i.e.,
$\tau\times\sigma\in\children(\mathcal{I}\times\mathcal{I})$, we have that
\begin{align*}
\mathbf{L}|_{\tau\times\sigma}={}&
\mathcal{S}_\mathcal{H}(\mathcal{I},\mathcal{I})|_{\tau\times\sigma}\\
={}&
\sum_{\rho\in\children(\mathcal{I})}
\mathbf{H}|_{\tau\times\rho}
\mathbf{K}|_{\rho\times\sigma}\\
={}&\sum_{\substack{
			\rho\in\children(\mathcal{I})\colon\\
			\tau\times\rho\in\mathcal{F}\\
			\text{or}\\
			\rho\times\sigma\in\mathcal{F}
		}}
		\mathbf{H}|_{\tau\times\rho}\mathbf{K}|_{\rho\times\sigma}+
		\sum_{\substack{
				\rho\in\children(\mathcal{I})\colon\\
				\tau\times\rho\in\mathcal{B\setminus F}\\
				\rho\times\sigma\in\mathcal{B\setminus F}
			}}
			\mathbf{H}|_{\tau\times\rho}\mathbf{K}|_{\rho\times\sigma},
\end{align*}
due to the block-matrix structure \eqref{eq:blockHMat} of $\mathbf{H}$ and
$\mathbf{K}$, see also Figure~\ref{fig:Hmatprod} for an illustration.

The pivotal idea is now that $\mathbf{L}|_{\tau\times\sigma}$ can be
written as a \sumexpr-expression itself, for which we treat the two remaining sums as follows:
\begin{itemize}
\item
The products in the first sum involve at least one low-rank matrix, such that
the product in low-rank representation can easily be computed using
matrix-vector multiplications. Having
these low-rank matrices computed, we can store the first sum as a
\sumexpr-expression of low-rank matrices $\mathcal{S}_\mathcal{R}(\tau,\sigma)$.
\item
Both factors of the products in the second sum correspond to inadmissible
block-clusters. Thus, they are either dense matrices or $\mathcal{H}$-matrices.
Since a dense matrix is just a special case of an $\mathcal{H}$-matrix, the
second sum can be written as a \sumexpr-expression of $\mathcal{H}$-matrices
$\mathcal{S}_\mathcal{H}(\tau,\sigma)$.
\end{itemize}
It follows that $\mathbf{L}|_{\tau\times\sigma}$ can be represented as a
\sumexpr-expressions by setting
\[
\mathbf{L}|_{\tau\times\sigma}=\mathcal{S}_\mathcal{R}(\tau,\sigma)+\mathcal{S}_\mathcal{H}(\tau,\sigma)\defis\mathcal{S}(\tau,\sigma),
\]
see also Figure~\ref{fig:Hmatprod} for an illustration.
\begin{figure}
\centering
\subfigure[Matrix blocks of $\mathbf{H}$ and $\mathbf{K}$ which have to be taken into
account for $\mathbf{L}|_{\tau\times\sigma}$.]{
\begin{tikzpicture}
\draw (1,1.2) node {$\mathbf{L}$};
\tikzHmatbig{0}{0}
\tikzHmat{0}{0.75}
\tikzHmat{0.5}{0.25}
\tikzHmat{1}{0.25}
\tikzHmat{1}{-0.25}
\tikzHmat{1.5}{-0.75}
\draw (2.5,0) node {$=$};
\draw (4,1.2) node {$\mathbf{H}$};
\tikzHmatbig{3}{0}
\tikzHmat{3}{0.75}
\tikzHmat{3.5}{0.25}
\tikzHmat{3}{-0.25}
\tikzHmat{4}{-0.25}
\tikzHmat{4.5}{-0.75}
\draw (6.5,1.2) node {$\mathbf{K}$};
\draw (5.25,0) node {$\cdot$};
\tikzHmatbig{5.5}{0}
\tikzHmat{5.5}{0.75}
\tikzHmat{6}{0.25}
\tikzHmat{6.5}{0.25}
\tikzHmat{6.5}{-0.25}
\tikzHmat{7}{-0.75}

\draw[very thick] (0,-0.5) rectangle (0.5,0);
\draw[blue,very thick] (0,-0.5) -- (0,0);
\draw[red,very thick] (0,0) -- (0.5,0);
\draw[blue] (-0.125,-0.25) node {$\tau$};
\draw[red] (0.25,0.125) node {$\sigma$};

\draw[very thick] (3,-0.5) rectangle (5,0);
\draw[blue,very thick] (3,-0.5) -- (3,0);
\draw[blue] (3-0.125,-0.25) node {$\tau$};

\draw[very thick] (5.5,-1) rectangle (6,1);
\draw[red,very thick] (5.5,1) -- (6,1);
\draw[red] (5.75,1.125) node {$\sigma$};
\end{tikzpicture}
}

\vspace{0.5cm}
\subfigure[Computation of the \sumexpr-expression for
$\mathbf{L}|_{\tau\times\sigma}$.]{
\begin{tikzpicture}
\draw (0.25,-0.6) node {$\mathbf{L}|_{\blue{\tau}\times\red{\sigma}}$};
\draw (0,-0.25) rectangle (0.5,0.25);
\draw[blue,very thick] (0,-0.25) -- (0,0.25);
\draw[red,very thick] (0,0.25) -- (0.5,0.25);
\draw[blue] (-0.125,0) node {$\tau$};
\draw[red] (0.25,0.375) node {$\sigma$};
\draw (0.75,0) node {$=$};
\tikzHHmat{1}{0}
\draw (2.5,0) node {$+$};
\tikzlrmat{2.8}{0}
\draw (3.8,0) node {$\cdot$};
\tikzlrmat{4}{0}
\draw (5.1,0) node {$+$};
\tikzHmat{5.4}{0}
\draw (6.1,0) node {$\cdot$};
\tikzlrmat{6.3}{0}
\draw (7.5,0) node {$+$};
\tikzlrmat{7.8}{0}
\draw (8.8,0) node {$\cdot$};
\tikzlrmat{9}{0}

\draw [decoration={brace,mirror,raise=0.5cm},decorate] (2.8,0) -- (4.8,0);
\draw [decoration={brace,mirror,raise=0.5cm},decorate] (5.4,0) -- (7.2,0);
\draw [decoration={brace,mirror,raise=0.5cm},decorate] (7.8,0) -- (9.8,0);

\tikzlrmat{3.4}{-1}
\tikzlrmat{5.9}{-1}
\tikzlrmat{8.4}{-1}

\draw [decoration={brace,mirror,raise=0.5cm},decorate] (1,-1) -- (2.2,-1);
\draw [decoration={brace,mirror,raise=0.5cm},decorate] (2.8,-1) -- (9.8,-1);

\draw (1.6,-2) node {$\mathcal{S}_\mathcal{H}(\blue{\tau}\times\red{\sigma})$};
\draw (2.5,-2) node {$+$};
\draw (6.3,-2) node {$\mathcal{S}_\mathcal{R}(\blue{\tau}\times\red{\sigma})$};
\draw (10.1,-2) node {$=$};
\draw (11,-2) node {$\mathcal{S}(\blue{\tau}\times\red{\sigma})$};

\end{tikzpicture}
}
\caption{\label{fig:Hmatprod}Illustration why a block $\mathbf{L}|_{\tau\times\sigma}$ on the coarsest level of the target $\mathcal{H}$-matrix can be represented as a \sumexpr-expression $\mathcal{S}(\tau,\sigma)$.}
\end{figure}
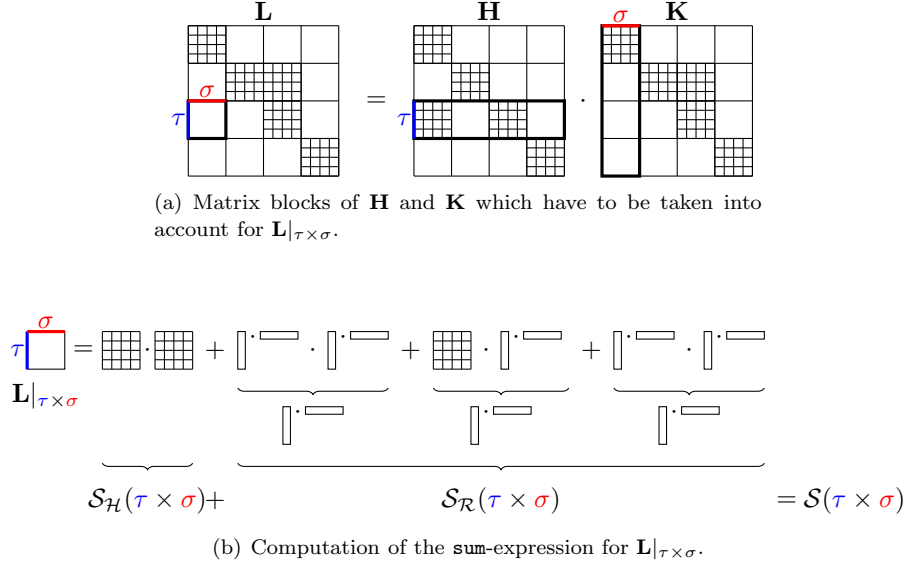

We can thus represent all children of the root of the block-cluster tree
by \sumexpr-expressions. However, we will require to represent all leaves
of the block-cluster tree as \sumexpr-expressions.
It thus remains to discuss how to represent matrix blocks
$\mathbf{L}|_{\tau\times\sigma}$ when $\tau\times\sigma$ is not a
child of the root.

\begin{remark}
The representation of any $\mathbf{L}|_{\tau\times\sigma}$ in terms
of \sumexpr-expressions is not unique. For example, assuming that
$\tau\times\sigma$ is on level $j$, one may refine the block-matrix
structure of $\mathbf{H}$ and $\mathbf{K}$ by modifying the corresponding
admissibility condition. When the admissibility condition is set to
$\false$ on all levels smaller or equal to $j$, one can construct a finer
partitioning for $\mathbf{H}$ and $\mathbf{K}$ to which one can apply the
same strategy as above to obtain a \sumexpr-expression for
$\mathbf{L}|_{\tau\times\sigma}$. In particular, the conversions to the
finer partitioning can be achieved without introducing any additional
errors.
However, we will show in Section~\ref{sec:cost} that we require the
following more sophisticated strategy to obtain an $\mathcal{H}$-matrix
multiplication in almost linear complexity.
\end{remark}

\subsection{Restrictions of \sumexpr-expressions}
The main difference between the \sumexpr-expressions for $\mathbf{L}$ and
its restriction $\mathbf{L}|_{\tau\times\sigma}$ from the previous subsection
is the presence of
$\mathcal{S}_\mathcal{R}(\tau,\sigma)$.
Given a block-cluster $\tau'\times\sigma'\in\children(\tau\times\sigma)$,
it then holds
\begin{align*}
\mathbf{L}|_{\tau'\times\sigma'}={}&
\big(\mathbf{L}|_{\tau\times\sigma}\big)|_{\tau'\times\sigma'}\\
={}&
\mathcal{S}(\tau,\sigma)|_{\tau'\times\sigma'}\\
={}&
\mathcal{S}_\mathcal{R}(\tau,\sigma)|_{\tau'\times\sigma'}+
\mathcal{S}_\mathcal{H}(\tau,\sigma)|_{\tau'\times\sigma'},
\end{align*}
where $\mathcal{S}_\mathcal{H}(\tau,\sigma)|_{\tau'\times\sigma'}$ can be rewritten as
\begin{align*}
\mathcal{S}_\mathcal{H}(\tau,\sigma)|_{\tau'\times\sigma'}
={}&
\sum_{\substack{
\rho\in\mathcal{T}_{\mathcal{I}}\colon\\
\tau\times\rho\in\mathcal{B\setminus F}\\
\rho\times\sigma\in\mathcal{B\setminus F}
}}
\big(\mathbf{H}|_{\tau\times\rho}
\mathbf{K}|_{\rho\times\sigma}\big)\big|
_{\tau'\times\sigma'}\\
={}&
\sum_{\substack{
\rho\in\mathcal{T}_{\mathcal{I}}\colon\\
\tau\times\rho\in\mathcal{B}\setminus\mathcal{F}\\
\rho\times\sigma\in\mathcal{B}\setminus\mathcal{F}
}}
\sum_{\rho'\in\children(\rho)}
\mathbf{H}|_{\tau'\times\rho'}\mathbf{K}|_{\rho'\times\sigma'}\\
={}&
\sum_{\substack{
\rho'\in\mathcal{T}_{\mathcal{I}}\colon\\
\tau'\times\rho'\in\mathcal{B}\\
\rho'\times\sigma'\in\mathcal{B}
}}
\mathbf{H}|_{\tau'\times\rho'}\mathbf{K}|_{\rho'\times\sigma'}.
\end{align*}
Each of the products in the last sum can be treated in the same manner as for the root. Thus,
$\mathcal{S}_\mathcal{H}(\tau,\sigma)|_{\tau'\times\sigma'}$ can be represented as
a \sumexpr-expression
\[
\mathcal{S}_\mathcal{H}(\tau,\sigma)|_{\tau'\times\sigma'}=\mathcal{S}(\tau',\sigma')=\mathcal{S}_\mathcal{R}(\tau',\sigma')+\mathcal{S}_\mathcal{H}(\tau',\sigma'),
\]
where $\mathcal{S}_\mathcal{R}(\tau',\sigma')$ and $\mathcal{S}_\mathcal{H}(\tau',\sigma')$ may be both non-empty.

The restriction of $\mathcal{S}_\mathcal{R}(\tau,\sigma)$ to $\tau'\times\sigma'$ can
be accomplished by the restriction of the corresponding low-rank matrices.
In actual implementations, the restriction of the low-rank matrices can be
realized by index-shifts, and thus without further arithmetic operations.

Since the sum of
two \sumexpr-expressions is again a \sumexpr-expression, we have shown that
each matrix block $\mathbf{L}|_{\tau\times\sigma}$ can be represented as a
\sumexpr-expression.
 A recursive algorithm for their construction is listed
in Algorithm~\ref{alg:dosumexpr}. When the algorithm is initialized with
$\mathcal{S}(\mathcal{I},\mathcal{I})=\mathcal{S}_\mathcal{H}(\mathcal{I},\mathcal{I})
=\mathbf{HK}$ and is applied recursively to all elements of
$\mathcal{B}$, it creates a \sumexpr-expression for each block-cluster in
$\mathcal{B}$, in particular for all leaves of the farfield and the nearfield.
\begin{algorithm}
\caption{\label{alg:dosumexpr}Given $\mathcal{S}(\tau,\sigma)$, construct $\mathcal{S}(\tau',\sigma')$ for $\tau'\times\sigma'\in\children(\tau\times\sigma$).}
\begin{algorithmic}
\Function{$\mathcal{S}(\tau',\sigma')=$\,restrict}{$\mathcal{S}(\tau,\sigma)$, $\tau'\times\sigma'$}
\State{Set $\mathcal{S}_\mathcal{R}(\tau',\sigma')=\sum_i \big(\mathbf{A}_i\mathbf{B}_i^{\intercal}\big)\big|_{\tau'\times\sigma'}$,
given $\mathcal{S}_\mathcal{R}(\tau,\sigma)=\sum_i \mathbf{A}_i\mathbf{B}_i^{\intercal}$}
\State{Set $\mathcal{S}_\mathcal{H}(\tau',\sigma')$ as empty}
\For{$\mathbf{H}|_{\tau\times\rho}
	\mathbf{K}|_{\rho\times\sigma}\in\mathcal{S}_\mathcal{H}(\tau,\sigma)$}
\For{$\rho'\in\children(\rho)$}
\If{$\tau'\times\rho'\in\mathcal{F}$ or $\rho'\times\sigma'\in\mathcal{F}$}
\State{Compute low-rank matrix
$\mathbf{A}\mathbf{B}^{\intercal}=\mathbf{H}|_{\tau'\times\rho'}\mathbf{K}|_{\rho'\times\sigma'}$}
\State{Set $\mathcal{S}_\mathcal{R}(\tau',\sigma')=\mathcal{S}_\mathcal{R}(\tau',\sigma')+
\mathbf{A}\mathbf{B}^{\intercal}$}
\Else
\State{Set $\mathcal{S}_\mathcal{H}(\tau',\sigma')=\mathcal{S}_\mathcal{H}(\tau',\sigma')+
\mathbf{H}|_{\tau'\times\rho'}\mathbf{K}|_{\rho'\times\sigma'}$}
\EndIf
\EndFor
\EndFor
\State{Set $\mathcal{S}(\tau',\sigma')=\mathcal{S}_\mathcal{R}(\tau',\sigma')
+\mathcal{S}_\mathcal{H}(\tau',\sigma')$}
\EndFunction
\end{algorithmic}
\end{algorithm}

\subsection{$\mathcal{H}$-matrix multiplication using \sumexpr-expressions}
The algorithm from the previous section provides us, when applied
recursively, with exact
representations in terms of \sumexpr-expressions for each matrix block
$\mathbf{L}|_{\tau\times\sigma}$ for all block-clusters
$\tau\times\sigma\in\mathcal{B}$. In order to compute an $\mathcal{H}$-matrix
approximation of $\mathbf{L}$, we only have to convert these \sumexpr-expressions
to dense matrices or low-rank matrices. This leads to the $\mathcal{H}$-matrix
multiplication algorithm given in Algorithm~\ref{alg:theHmult}, which is
initialized with
$\mathcal{S}(\mathcal{I},\mathcal{I})=\mathcal{S}_\mathcal{H}(\mathcal{I},\mathcal{I})
=\mathbf{HK}$. The \textsc{evaluate}()-routine in the algorithm computes the
representation of the corresponding \sumexpr-expression as a full matrix,
whereas the $\mathfrak{T}()$-routine is a generic low-rank approximation or
\emph{truncation} operator.
\begin{algorithm}
\caption{\label{alg:theHmult}$\mathcal{H}$-matrix product: Compute
$\mathbf{L}|_{\tau\times\sigma}=(\mathbf{H}\mathbf{K})|_{\tau\times\sigma}$ from $\mathcal{S}(\tau,\sigma)$}
\begin{algorithmic}
\Function{$\mathbf{L}|_{\tau\times\sigma}=\mathcal{H}$mult}
{$\mathcal{S}(\tau,\sigma)$}
\If{$\tau,\sigma$ is not a leaf}
\Comment{$\mathbf{L}|_{\tau\times\sigma}$ is an $\mathcal{H}$-matrix}
\For{$\tau'\times\sigma'\in\children(\tau\times\sigma)$}
\State{Set $\mathcal{S}(\tau',\sigma')=$ \Call{restrict}{$\mathcal{S}(\tau,\sigma)$, $\tau'\times\sigma'$}}
\State{$\mathbf{L}|_{\tau'\times\sigma'}=$ \Call{$\mathcal{H}$mult}{$\mathcal{S}(\tau',\sigma')$}}
\EndFor
\Else
\If{$\tau\times\sigma\in\mathcal{F}$}
\Comment{$\mathbf{L}|_{\tau\times\sigma}$ is low-rank}
\State{$\mathbf{L}|_{\tau\times\sigma}=\mathfrak{T}(\mathcal{S}(\tau,\sigma))$}
\Else
\Comment{$\mathbf{L}|_{\tau\times\sigma}$ is a dense}
\State{$\mathbf{L}|_{\tau\times\sigma}=$ \Call{evaluate}{$\mathcal{S}(\tau,\sigma)$}}
\EndIf
\EndIf
\EndFunction
\end{algorithmic}
\end{algorithm}

The algorithm can be seen as a general framework for the multiplication of
$\mathcal{H}$-matrices, although several special cases are stated
in the literature
using different algorithmic implementations.

\subsubsection{No truncation}
In principle, the truncation operator could act as an identity. For this
implementation, it was shown in \cite{GH03} that the rank $\tilde{k}$ of low-rank
matrices in the product is bounded by
\[
\tilde{k}\leq\Cid\Csp(p+1)k,
\]
where the constant $\Cid = \Cid(\mathcal{B})$ is given by
\begin{align*}
C_{\operatorname{id}}(\tau\times\sigma)
\isdef{}&
\#\{\tau'\times\sigma'\colon\tau'\in\operatorname{successor}(\tau),\sigma'\in\operatorname{successor}(\sigma)~\text{such that}\\
&\qquad\qquad\qquad\quad
\text{there exists}~\rho'\in\mathcal{T}_{\mathcal{I}}~\text{such that}\\
&\qquad\qquad\qquad\quad
\tau'\times\rho'\in\mathcal{B},\rho'\times\sigma'\in\mathcal{B}\},\\
C_{\operatorname{id}}(\mathcal{B})\isdef{}&\max_{\tau\times\sigma\in\mathcal{L}(\mathcal{B})}
C_{\operatorname{id}}(\tau\times\sigma).
\end{align*}

Although the rank of the product is bounded from above, the constants in the
bound might be large. Hence one is interested in truncating the low-rank
matrices to lower rank in a best possible way. Depending on the employed
truncation operator $\mathfrak{T}$, different implementations of the
multiplication evolve.

\subsubsection{Truncation with a single low-rank SVD}
Traditionally, the used truncation operators are based on the singular value
decomposition, from which several implementations have evolved. The most
accurate implementation is given by computing the exact product in low-rank
format and truncating it to a lower rank by using a singular value
decomposition for low-rank matrices as given in Algorithm~\ref{alg:lrsvd}.
\begin{algorithm}[tbp]
	\caption{\label{alg:lrsvd}SVD of a low-rank matrix
		$\mathbf{L}\mathbf{R}^\intercal$, see \cite[Algorithm 2.17]{Hac15}}
	\begin{algorithmic}
		\Function{${\bf U\Sigma V}^{\intercal}$=LowRankSVD}{$\mathbf{L}\mathbf{R}^\intercal$}
		\State ${\bf Q_LR_L}{}=$ QR-decomposition of ${\bf L}$, ${\bf Q_L}\in\mathbb{R} ^{\tau\times\tilde{k}}$, ${\bf R_L}\in\mathbb{R} ^{\tilde{k}\times\tilde{k}}$
		\State ${\bf Q_RR_R}{}=$ QR-decomposition of ${\bf R}$, ${\bf Q_R}\in\mathbb{R} ^{\sigma\times\tilde{k}}$, ${\bf R_R}\in\mathbb{R} ^{\tilde{k}\times\tilde{k}}$
		\State ${\bf \tilde{U}\Sigma\tilde{V}}^{\intercal}{}= \operatorname{SVD}({\bf R_LR_R}^{\intercal})$
		\State ${\bf U}={\bf Q_L\tilde{U}}$
		\State ${\bf V}={\bf Q_R\tilde{V}}$
		\EndFunction
	\end{algorithmic}
\end{algorithm}
The number of operations for
the $\mathcal{H}$-matrix multiplication, assuming $n_{\min}\leq k$, is then
bounded by
\[
43\Cid^3\Csp^3k^3(p+1)^3\max\{\#\mathcal{I},\#\mathcal{F}+\#\mathcal{N}\},
\]
see \cite{GH03}.
However, it turns out that, for more complex block-cluster trees of practical
relevance, the numerical effort for this implementation of the multiplication is
quite high.

\subsubsection{Truncation with multiple low-rank SVDs --- fast truncation}\label{sec:fasttrunc}
Therefore, one may replace the the above truncation by the fast truncation
of low-rank matrices, which aims at accelerating the truncation of sums of
low-rank matrices by allowing a larger error margin. The basic idea is that in
many cases
\[
\mathbf{M}_n\mathbf{N}_n^{\intercal}=
\mathfrak{T}\bigg(\sum _{i=1}^n\mathbf{A}_i\mathbf{B}_i^{\intercal}\bigg)
\]
can be sufficiently well be approximated by computing
\begin{align*}
\mathbf{M}_2\mathbf{N}_2^{\intercal}={}&\mathfrak{T}\big(\mathbf{A}_1\mathbf{B}_1^{\intercal}+
\mathbf{A}_2\mathbf{B}_2^{\intercal}\big)\\
\mathbf{M}_i\mathbf{N}_i^{\intercal}={}&\mathfrak{T}\big(\mathbf{M}_{i-1}\mathbf{N}_{i-1}^{\intercal}+
\mathbf{A}_i\mathbf{B}_i^{\intercal}\big),\qquad i=3,\ldots,n.
\end{align*}
If the fast truncation is used as a truncation operator, the number of operations
for the $\mathcal{H}$-matrix multiplication, assuming $n_{\min}\leq k$, is
bounded by
\[
56\Csp^2\max\{\Cid,\Csp\}k^2(p+1)^2\#\mathcal{I}+184\Csp\Cid k^3(p+1)(\#\mathcal{F}+\#\mathcal{N}),
\]
see \cite{GH03}.
Numerical experiments confirm that the $\mathcal{H}$-matrix multiplication using
the fast truncation is indeed faster, but also slightly less accurate than the
previous version of the multiplication.

\subsubsection{Truncation with accumulated updates}
Recently, in \cite{Boe17}, a new truncation operator was introduced, to which
we will refer to as truncation with accumulated updates. Therefore,
we slightly modify the definition of the \sumexpr-expression and denote the new
object by $\mathcal{S}^{a}$.
\begin{definition}
For a given block-cluster $\tau\times\sigma\in\mathcal{B}$, we say that the sum
of a low-rank matrix
$\mathbf{A}\mathbf{B}^{\intercal}\in\mathcal{R}(\tau\times\sigma,k)$ and a
\sumexpr-expression of $\mathcal{H}$-matrices $\mathcal{S}_\mathcal{H}(\tau,\sigma)$ is
a \sumexpr-\emph{expression with accumulated updates} and write
\[
\mathcal{S}^{a}(\tau,\sigma)=\mathbf{A}\mathbf{B}^{\intercal}+\mathcal{S}_\mathcal{H}(\tau,\sigma).
\]
In particular, we write
$\mathcal{S}_\mathcal{R}^{a}(\tau,\sigma)=\mathbf{A}\mathbf{B}^{\intercal}$ and, for a
second low-rank matrix
$\tilde{\mathbf{A}}\tilde{\mathbf{B}}^{\intercal}\in\mathcal{R}(\tau\times\sigma,k)$,
we define the sum of these expressions with a low-rank-matrix as
\begin{align*}
\mathcal{S}_\mathcal{R}^{a}(\tau,\sigma)+\tilde{\mathbf{A}}\tilde{\mathbf{B}}^{\intercal}={}&\mathfrak{T}\big(\mathbf{A}\mathbf{B}^{\intercal}+\tilde{\mathbf{A}}\tilde{\mathbf{B}}^{\intercal}\big)\\
\mathcal{S}^{a}(\tau,\sigma)+\tilde{\mathbf{A}}\tilde{\mathbf{B}}^{\intercal}={}&\mathfrak{T}\big(\mathbf{A}\mathbf{B}^{\intercal}+\tilde{\mathbf{A}}\tilde{\mathbf{B}}^{\intercal}\big)+\mathcal{S}_\mathcal{H}(\tau,\sigma),
\end{align*}
i.e., instead of adding the new low-rank matrix to the list of low-rank matrices
in $\mathcal{S}^{a}(\tau,\sigma)$, we perform an addition of low-rank matrices
with subsequent truncation.
\end{definition}
Obviously, every \sumexpr-expression with accumulated updates is also a
\sumexpr-expression in the sense of Definition~\ref{def:sumexpr}. The key point
is that the addition with low-rank matrices is treated differently. By replacing
the \sumexpr-expressions in Algorithm~\ref{alg:theHmult} by \sumexpr-expressions
with accumulated updates, we obtain the $\mathcal{H}$-matrix multiplication as
stated in \cite{Boe17}.
The number of operations for this algorithm is bounded by
\[
3C_{\operatorname{mm}}\Csp^2k^2(p+1)^2\#\mathcal{I}.
\]
The constant $C_{\operatorname{mm}}$ consists of several other constants which
exceed the scope of this article and we refer to \cite{Boe17} for more
details. However, the numerical experiments in \cite{Boe17} indicate that the
truncation operator with accumulated updates is faster than the fast truncation
operator.

\bigskip
An issue of both, the fast truncation and the truncation with accumulated
updates, is a situation where the product of $\mathcal{H}$-matrices has to be
converted into a low-rank matrix. Here, both implementations rely on a
\emph{hierarchical approximation} of the product of the $\mathcal{H}$-matrices.
That is, the product is computed in $\mathcal{H}$-matrix format and then,
starting from the leaves, recursively converted into low-rank format, which
is a time-consuming task and requires several intermediate truncation steps.
This introduces additional truncation errors, although the truncation with
accumulated updates somehow reduces the number of conversions.

A slightly different approach than the fast truncation with hierarchical
approximation was proposed in \cite{DHP15}.
There, the $\mathcal{H}$-matrix products have been truncated to low-rank matrices
using an iterative eigensolver based on matrix-vector multiplications
before applying the fast truncation operator. The numerical experiments
prove this approach to be computationally efficient, while providing even
a best approximation to the product of the $\mathcal{H}$-matrices
in low-rank format.

We summarize by remarking that all of the common $\mathcal{H}$-matrix
multiplication algorithms are variants of Algorithm~\ref{alg:theHmult},
employing different truncation operators. Therefore, in order to improve the
accuracy and the speed of the $\mathcal{H}$-matrix multiplication, efficient
and accurate truncation operators have to be used.

Since approaches based on the singular value decomposition of dense or
low-rank matrices
have proven to be less promising, we focus in the following on low-rank
approximation methods based on matrix-vector multiplications. The idea behind
this approach is that the multiplication of a \sumexpr-expression
$\mathcal{S}(\tau,\sigma)$ with a vector $\mathbf{v}$ of length $\#\sigma$
can be computed efficiently by
\[
\mathcal{S}(\tau,\sigma)\mathbf{v}
=
\sum_{j\in\mathcal{J}_{\mathcal{R}}}\mathbf{A}_j\big(\mathbf{B}_j^{\intercal}\mathbf{v}\big)+
\sum_{j\in\mathcal{J}_{\mathcal{H}}}\mathbf{H}_j\big(\mathbf{K}_j\mathbf{v}\big).
\]
Although this idea has already been mentioned in \cite{DHP15}, the used
eigensolver in \cite{DHP15} seemed to be less favourable for this task.
In the next section, we will discuss several approaches to compute low-rank
approximations to \sumexpr-expressions using matrix-vector multiplications.
In particular, we will discuss
\emph{adaptive} algorithms, which compute low-rank approximations to
\sumexpr-expressions up to a prescribed error tolerance. The adaptive
algorithms will allow us to compute the best-approximation of
$\mathcal{H}$-matrix products.

\section{Low-rank approximation schemes}\label{sec:approx}

In addition to the well known hierarchical approximation for the approximation
of \(\mathcal{H}\)-matrices by low-rank matrices, we consider here three different
schemes for the low-rank approximation of a given matrix. All of them
can be implemented in terms of elementary matrix-vector products and
are therefore well suited for the use in our new \(\mathcal{H}\)-matrix multiplication.
In what follows, let \({\bf A}\isdef{\bf H}|_{\tau\times\sigma}\in\mathbb{R}^{m\times n}\), $m=\#\tau$, $n=\#\sigma$, always denote
a target matrix block, which might be implicitly given in terms of a \texttt{sum}-expression $\mathcal{S}(\tau,\sigma)$.\\

\subsection{Adaptive cross approximation}\label{sec:ACA}
In the context of boundary element methods, the \emph{adaptive cross
approximation} (ACA), see \cite{Beb}, is frequently used to find
$\mathcal{H}$-matrix approximations to system matrices. However,
one can prove, see \cite{DHS17}, that the same idea can also be
applied to the product of pseudodifferential operators. Since
the $\mathcal{H}$-matrix multiplication can be seen as a discrete
analogon to the multiplication of pseudodifferential operators, we
may use ACA to find the low-rank approximations for the admissible
matrix blocks. Concretely, we approximate
${\bf A}={\bf H}|_{\tau\times\sigma}$ by a partially pivoted Gaussian
elimination, see \cite{Beb} for further details.
To this end, we define the vectors 
\({\boldsymbol{\ell}}_r\in\mathbb{R}^m\) and \({\bf u}_r\in\mathbb{R}^n\)
by the iterative scheme shown in Algorithm~\ref{alg:ACA}, where \({\bf A}=[a_{i,j}]_{i,j}\) is the matrix-block under consideration.

\begin{algorithm}[htb]
	\caption{Adaptive cross approximation (ACA)}
	\label{alg:ACA}
	\begin{algorithmic}
		\For{$r =1,2,\ldots$}
		\State Choose the element in \((i_r,j_r)\)-position of the Schur compolement as pivot
		\State $\hat{\bf u}_r=[a_{i_r,j}]_{j=1}^n-\sum_{j=1}^{r-1}
	[{\boldsymbol\ell}_j]_{i_r}{\bf u}_j$
	\State \({\bf u}_r=\hat{\bf u}_r/[\hat{\bf u}_r]_{j_r}\)
	\State ${\boldsymbol{\ell}}_r=[a_{i,j_r}]_{i=1}^m-
		\sum_{i=1}^{r-1}[u_i]_{j_r}{\boldsymbol{\ell}_i}$
		\EndFor
		\State Set \({\bf L}_r\isdef[{\boldsymbol\ell}_1,\ldots,{\boldsymbol\ell}_r]\) and \({\bf U}_r\isdef[{\bf u}_1,\ldots,{\bf u}_r]^\intercal\)
	\end{algorithmic}
\end{algorithm}

A suitable criterion that guarantees the convergence of the algorithm
is to choose the pivot element located in the \((i_{r},j_{r})\)-position
as the maximum element in modulus of the remainder 
\({\bf A}-{\bf L}_{r-1}{\bf U}_{r-1}\).
Unfortunately, this would compromise the overall cost of the approximation.
Therefore, we resort to another pivoting strategy which is sufficient in
most cases: we choose \(j_r\) such that \([\hat{\bf u}_r]_{j_r}\)
is the largest element in modulus of the row \(\hat{{\bf u}}_r\).

Obviously, the cost for the computation of the 
rank-$k$-approximation ${\bf L}_{k}{\bf U}_{k}$ to the block 
${\bf A}$ is \(\mathcal{O}\big(k^2(m+n)\big)\) and
the storage cost is \(\mathcal{O}\big(k(m+n)\big)\). 
In addition, if \({\bf A}\) is given via a \texttt{sum}-expression,
we have to compute \({\bf e}_{i_m}^\intercal{\bf A}\)
and \({\bf A}{\bf e}_{j_m}\) in each step, where \({\bf e}_i\)
denots the \(i\)-th unit vector, in order to retrieve the row and column under consideration.
The respective computational cost for the multiplication of a \sumexpr-expression with a
vector is estimated in Lemma~\ref{lem:compsumexprmv}.

\subsection{Lanczos bidiagonalization}\label{sec:lanczos}
The second algorithm we consider for compressing a given matrix block is based on the Lanczos bidiagonalization (BiLanczos),
see Algorithm~\ref{alg:Bidiagonalization}.
This procedure is equivalent to the tridiagonalization of the corresponding, symmetric Jordan-Wielandt matrix
\[
\begin{bmatrix}{\bf 0} & {\bf A} \\ {\bf A}^\intercal & {\bf 0}\end{bmatrix},
\]
cf.\ \cite{golub2012}.

\begin{algorithm}[htb]
	\caption{Golub-Kahan-Lanczos bidiagonalization}
	\label{alg:Bidiagonalization}
	\begin{algorithmic}
	\State Choose a random vector \({\bf w}_1\) with \(\|{\bf w}_1\|=1\) and set \({\bf q}_0={\bf 0},\beta_0=0\)
		\For{$r =1,2,\ldots$}
			\State\({\bf q}_r = {\bf A}{\bf w}_r-\beta_{r-1}{\bf q}_{r-1}\)
			\State \(\alpha_r = \|{\bf q}_r\|_2\)
			\State \({\bf q}_r = {\bf q}_r / \alpha_r\)
			\State \({\bf w}_{r+1}={\bf A}^\intercal{\bf q}_r-\alpha_r{\bf w}_r\)
			\State \(\beta_r=\|{\bf w}_{r+1}\|_2\)
			\State \({\bf w}_{r+1}={\bf w}_{r+1}/\beta_r\)
		\EndFor
	\end{algorithmic}
\end{algorithm}

The algorithm leads to a decomposition of the given matrix block according to
\[
{\bf Q}^\intercal{\bf A}{\bf W}={\bf B}\isdef\begin{bmatrix}
\alpha_1 & \beta_1 & & & \\
 & \alpha_2 & \beta 2 & & \\
 & & \ddots & \ddots & \\
 & & & \alpha_{n-1} & \beta_{n-1}\\
 & & & & \alpha_n
\end{bmatrix}
\]
with orthogonal matrices \({\bf Q}^\intercal{\bf Q}={\bf I}_m\) and \({\bf W}^\intercal{\bf W}={\bf I}_n\), cf.\ \cite{golub2012}.
Note that although the algorithm yields orthogonal vectors \({\bf q}_r\) and \({\bf w}_r\) by construction, we have
to perform additional reorthogonalization steps to obtain numerical stability. Since the algorithm, like ACA,
only depends on matrix-vector multiplications, it is well suited to compress a given block \({\bf A}\).

Truncating the algorithm after \(k\) steps
results in a low-rank approximation 
\[
{\bf A}\approx {\bf Q}_k{\bf B}_k{\bf W}_k^\intercal={\bf U}_k\begin{bmatrix}
\alpha_1 & \beta_1 & & & \\
 & \alpha_2 & \beta_2 & & \\
 & & \ddots & \ddots & \\
 & & & \alpha_{k-1} & \beta_{k-1}\\
 & & & & \alpha_k
\end{bmatrix}{\bf V}_k^\intercal .
\]
It is then easy to compute a singular value decomposition \({\bf B}_k=\tilde{\bf U}{\bf S}\tilde{\bf V}^\intercal\) and to obtain the
separable decomposition 
\[{\bf A}\approx ({\bf Q}_k\tilde{\bf U}{\bf S})({\bf W}_k\tilde{\bf V})^\intercal={\bf L}_k{\bf U}_k.\]
As in the ACA case, the algorithm only requires two matrix-vector products with the block \({\bf A}\) in each step.

\subsection{Randomized low-rank approximation}\label{sec:randomized}
The third algorithm we consider for finding a low-rank approximation to \({\bf A}\)
is based on successive multiplication with
Gaussian random vectors. The algorithm can be motivated as follows, cf.\ \cite{HMT11}: 
Let \({\bf y}_i={\bf A}{\boldsymbol\omega}_i\) for \(i=1,\ldots,r\), where \({\boldsymbol\omega}_1,\ldots,{\boldsymbol\omega}_r\in\mathbb{R}^n\)
are independently drawn Gaussian random vectors. The collection of these random vectors is very likely to be linearly independent, whereas
it is very unlikely that any linear combination of them falls in the null space of \({\bf A}\). As a consequence, the collection \({\bf y}_1,\ldots,{\bf y}_r\)
is linearly independent and spans the range of \({\bf A}\). Thus, by orthogonalizing \([{\bf y}_1,\ldots,{\bf y}_r]={\bf L}_r{\bf R}\), with an orthogonal
matrix \({\bf L}_r\in\mathbb{R}^{m\times r}\), we obtain \({\bf A}\approx{\bf L}_r{\bf U}_r\) with \({\bf U}_r={\bf L}_r^\intercal{\bf A}\), see Algorithm~\ref{alg:randomized}. 
Employing oversampling,
the quality of the approximation can be increased even further. For all the details, we refer to \cite{HMT11}.
\begin{algorithm}[htb]
	\caption{Randomized low-rank approximation}
	\label{alg:randomized}
	\begin{algorithmic}
	\State Set \({\bf L}_0 = [\,]\)
		\For{$r =1,2,\ldots$}
		\State Generate a Gaussian random vector \({\boldsymbol\omega}\in\mathbb{R}^n\)
		\State ${\boldsymbol\ell}_r = ({\bf I}-{\bf L}_{r-1}{\bf L}_{r-1}^\intercal){\bf A}{\boldsymbol\omega}$
		\State ${\boldsymbol\ell}_r = {\boldsymbol\ell}_r$ / $\|{\boldsymbol\ell}_r\|_2$
		\State ${\bf L}_r=[{\bf L}_{r-1}, {\boldsymbol\ell}_r]$
		\State ${\bf U}_r={\bf L}_r^\intercal{\bf A}$
		\EndFor
	\end{algorithmic}
\end{algorithm}
As the previous two algorithms, the randomized low-rank approximation requires only two matrix-vector multiplications with \({\bf A}\) in each step.

In contrast to the other two presented compression schemes, the randomized approximation allows for a blocked version
in a straightforward manner, 
where instead of a single Gaussian random vector, a Gaussian matrix can be used to approximate the range of \({\bf A}\).
Although there also exist block versions of the ACA and the BiLanczos, as well, they are know to be numerically very unstable.

Note that there exist probabilistic error estimates for the approximation of a given matrix by Algorithm~\ref{alg:randomized}, cf.\ \cite{HMT11}. 
Unfortunately, these error estimates are with respect to the spectral norm and therefore only give insights on the largest singular value 
of the remainder. To have control on the actual approximation quality of a certain matrix block, we are thus rather interest in an error estimate
with respect to the Frobenius norm. To that end, we propose a different, adaptive criterion which estimates the error with respect to the Frobenius norm.

\subsection{Adaptive stopping criterion}
In our implementation, the proposed compression schemes rely on the following 
well known adaptive stopping criterion, which
aims at reflecting the approximation error with respect to the Frobenius norm. 
We terminate the approximation if the criterion
\begin{equation}\label{eq:truncaca}
\|{\boldsymbol\ell}_{k+1}\|_2\|{\bf u}_{k+1}\|_2
\leq\varepsilon\|{{\bf L}_k{\bf U}_k}\|_F
\end{equation}
is met for some desired accuracy \(\varepsilon>0\).
This criterion can be justified as follows. We assume that the error in each step is reduced by a constant rate
\(0<\vartheta<1\), i.e.,
\[
\|{\bf A}-{\bf L}_{k+1}{\bf U}_{k+1}\|_F\leq\vartheta
\|{\bf A}-{\bf L}_{k}{\bf U}_{k}\|_F.
\]
Then, there holds
\begin{align*}
\|{\boldsymbol\ell}_{k+1}\|_2\|{\bf u}_{k+1}\|_2&=
\|{\bf L}_{k+1}{\bf U}_{k+1}-{\bf L}_{k}{\bf U}_{k}\|_F\\
&\leq\|{\bf A}-{\bf L}_{k+1}{\bf U}_{k+1}\|_F+
\|{\bf A}-{\bf L}_{k}{\bf U}_{k}\|_F\\
&\leq (1+\vartheta)\|{\bf A}-{\bf L}_{k}{\bf U}_{k}\|_F
\end{align*}
and, vice versa,
\begin{align*}
\|{\bf L}_{k+1}{\bf U}_{k+1}-{\bf L}_{k}{\bf U}_{k}\|_F&\geq
\|{\bf A}-{\bf L}_{k}{\bf U}_{k}\|_F
-\|{\bf A}-{\bf L}_{k+1}{\bf U}_{k+1}\|_F\\
&\geq (1-\vartheta)\|{\bf A}-{\bf L}_{k}{\bf U}_{k}\|_F.
\end{align*}
Therefore, the approximation error is proportional to the norm
\[
\|{\boldsymbol\ell}_{k+1}{\bf u}_{k+1}\|_F=\|{\boldsymbol\ell}_{k+1}\|_2\|{\bf u}_{k+1}\|_2
\]
of the update vectors, i.e.,
\[
(1-\vartheta)\|{\bf A}-{\bf L}_{k}{\bf U}_{k}\|_F
\leq\|{\boldsymbol\ell}_{k+1}\|_2\|{\bf u}_{k+1}\|_2\leq(1+\vartheta)
\|{\bf A}-{\bf L}_{k}{\bf U}_{k}\|_F.
\]
Thus, together with \eqref{eq:truncaca}, we can guarantee a 
relative error bound
\begin{equation}\label{eq:ACAblockerror}
\|{\bf A}-{\bf L}_{k}{\bf U}_{k}\|_F\leq\frac{\varepsilon}{1-\vartheta}\|{{\bf L}_k{\bf U}_k}\|_F\leq\frac{\varepsilon}{1-\vartheta}\|{\bf A}\|_F.
\end{equation}

Based on this blockwise error estimate, it is straightforward to assess the
overall error for the approximation of a given $\mathcal{H}$-matrix.

\begin{theorem}
Let ${\bf H}$ be the uncompressed matrix and 
$\tilde{\bf H}$ be the matrix which is compressed by
one of the aforementioned compression schemes.
Then, with respect to the 
Frobenius norm, there holds the error estimate
\[
  \|{\bf H}-\tilde{\bf H}\|_F\lesssim\varepsilon\|{\bf H}\|_F
\]
provided that the blockwise error satisfies \eqref{eq:ACAblockerror}.
\end{theorem}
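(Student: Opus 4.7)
My plan is to exploit the fact that the Frobenius norm is additive over a disjoint block decomposition, combined with the fact that the leaves of the block-cluster tree $\mathcal{B}$ (farfield plus nearfield) form a partition of $\mathcal{I}\times\mathcal{I}$. This converts the global error bound into a sum of blockwise error bounds, each of which is controlled by \eqref{eq:ACAblockerror}.

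Concretely, I would first write
\[
\|{\bf H}-\tilde{\bf H}\|_F^2 = \sum_{\tau\times\sigma\in\mathcal{F}}\|{\bf H}|_{\tau\times\sigma}-\tilde{\bf H}|_{\tau\times\sigma}\|_F^2 + \sum_{\tau\times\sigma\in\mathcal{N}}\|{\bf H}|_{\tau\times\sigma}-\tilde{\bf H}|_{\tau\times\sigma}\|_F^2.
\]
The nearfield blocks are stored exactly as dense matrices (they are evaluated via \textsc{evaluate}() in Algorithm~\ref{alg:theHmult}), so the nearfield contribution vanishes. For every admissible leaf $\tau\times\sigma\in\mathcal{F}$, the hypothesis \eqref{eq:ACAblockerror} yields
\[
\|{\bf H}|_{\tau\times\sigma}-\tilde{\bf H}|_{\tau\times\sigma}\|_F\leq\frac{\varepsilon}{1-\vartheta}\|{\bf H}|_{\tau\times\sigma}\|_F.
\]

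Squaring, summing, and using once more that the leaves of $\mathcal{B}$ partition $\mathcal{I}\times\mathcal{I}$ (so $\sum_{\tau\times\sigma\in\mathcal{F}}\|{\bf H}|_{\tau\times\sigma}\|_F^2\leq\|{\bf H}\|_F^2$) gives
\[
\|{\bf H}-\tilde{\bf H}\|_F^2\leq\Bigl(\frac{\varepsilon}{1-\vartheta}\Bigr)^2\sum_{\tau\times\sigma\in\mathcal{F}}\|{\bf H}|_{\tau\times\sigma}\|_F^2\leq\Bigl(\frac{\varepsilon}{1-\vartheta}\Bigr)^2\|{\bf H}\|_F^2,
\]
and taking square roots yields the claim, with the hidden constant being $1/(1-\vartheta)$.

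There is essentially no obstacle here: the proof is a one-line consequence of Pythagorean additivity of the Frobenius norm over the leaf partition together with the assumed blockwise bound. The only subtle point worth mentioning explicitly is that \eqref{eq:ACAblockerror} was derived under the geometric error-reduction assumption $\|{\bf A}-{\bf L}_{k+1}{\bf U}_{k+1}\|_F\leq\vartheta\|{\bf A}-{\bf L}_k{\bf U}_k\|_F$, so the theorem should be read as holding modulo this heuristic on the compression scheme; this is already implicit in the statement's use of $\lesssim$.
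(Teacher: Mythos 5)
Your proposal is correct and follows essentially the same route as the paper: decompose the squared Frobenius norm over the leaves of $\mathcal{B}$, observe that only the farfield blocks contribute, apply the blockwise bound \eqref{eq:ACAblockerror}, and bound the sum of the blockwise norms by $\|{\bf H}\|_F^2$. Your version is in fact slightly more careful than the paper's, since you make the vanishing nearfield contribution and the hidden constant $1/(1-\vartheta)$ explicit.
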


\begin{proof}
In view of \eqref{eq:ACAblockerror}, we have
\begin{align*}
  \|{\bf H}-\tilde{\bf H}\|_F^2
  	&=\sum_{\tau\times\sigma\in\mathcal{F}}
		\big\|{\bf H}|_{\tau\times\sigma}-\tilde{\bf H}|_{\tau\times\sigma}\big\|_F^2\\
	&\lesssim \sum_{\tau\times\sigma\in\mathcal{F}}
		\|{\bf H}|_{\tau\times\sigma}\|_F^2\\
	&=\varepsilon^2\|{\bf H}\|_F^2.
\end{align*}
Taking square roots on both sides yields the assertion.
\end{proof}

\subsection{Fixed rank approximation}
The traditional $\mathcal{H}$-matrix multiplication is based on low-rank approximations
to a fixed a-priori prescribed rank. We will therefore shortly comment on the fixed rank
versions of the introduced algorithms which we will later also use in the numerical
experiments. Since ACA and the BiLanczos algorithm are intrinsically iterative methods,
we stop the iteration whenever the prescribed rank is reached. For the randomized low-rank
approximation we may use a single iteration of its blocked variant. The corresponding
algorithm featuring also additional $q\in\mathbb{N}_0$ subspace iterations to increase
accuracies is listed in Algorithm~\ref{alg:randomizedfixed}, cp.\ \cite{HMT11}.
\begin{algorithm}[htb]
	\caption{Randomized rank-$k$ approximation with subspace iterations}
	\label{alg:randomizedfixed}
	\begin{algorithmic}
		\State Generate a Gaussian random matrix \({\boldsymbol\Omega}\in\mathbb{R}^{n\times k}\)
		\State $\mathbf{L} = {\bf A}{\boldsymbol\Omega}$
		\State Orthonormalize columns of $\mathbf{L}$
		\For{$\ell=1,\ldots,q$}
		\State $\mathbf{U} = {\bf A}^\intercal\mathbf{L}$
		\State Orthonormalize columns of $\mathbf{U}$
		\State $\mathbf{L} = {\bf A}\mathbf{U}$
		\State Orthonormalize columns of $\mathbf{L}$
		\EndFor
		\State $\mathbf{U} = {\bf A}^\intercal\mathbf{L}$
	\end{algorithmic}
\end{algorithm}
For practical purposes, when the singular values of $\mathbf{A}$ decay sufficiently fast,
a value of $q=1$ is usually sufficient. We will therefore use $q=1$ in the numerical
experiments. For a detailed discussion on the number of subspace
iterations and comments on \emph{oversampling}, i.e., sampling at a higher rank with a
subsequent truncation, we refer to \cite{HMT11}.

\section{Cost of the \(\mathcal{H}\)-matrix multiplication}
\label{sec:cost}
The following section is dedicated to the cost analysis of the
$\mathcal{H}$-matrix multiplication as introduced in
Algorithm~\ref{alg:theHmult}. We first estimate the
cost for the computation of the \sumexpr-expressions and
then proceed by analyzing the multiplication of a \sumexpr-expression
with a vector. Having these estimates at our disposal, the main theorem of this
section confirms that the cost of the $\mathcal{H}$-matrix multiplication
scales almost linearly with the cardinality of the index set $\mathcal{I}$.

\begin{lemma}\label{lem:compsumexprrec}
Given a block-cluster $\tau\times\sigma\in\mathcal{B}$ with
\sumexpr-expression $\mathcal{S}(\tau,\sigma)$, the
\sumexpr-expression $\mathcal{S}(\tau',\sigma')$ for any
block-cluster
$\tau'\times\sigma'\in\children(\tau\times\sigma)$
can be computed in at most
\begin{align*}
N_{\mathrm{update},\mathcal{S}}(\tau',\sigma')\leq
\sum_{\substack{
\rho'\in\mathcal{T}_{\mathcal{I}}\colon\\
\tau'\times\rho'\in\mathcal{B\setminus F}\\
\rho'\times\sigma'\in\mathcal{F}
}}
kN_{\mathcal{H}\cdot v}(\tau'\times\rho',k)+{}&
\sum_{\substack{
\rho'\in\mathcal{T}_{\mathcal{I}}\colon\\
\tau'\times\rho'\in\mathcal{F}\\
\rho'\times\sigma'\in\mathcal{B\setminus F}
}}
kN_{\mathcal{H}\cdot v}(\rho'\times\sigma',k)\\
+{}&
\sum_{\substack{
\rho'\in\mathcal{T}_{\mathcal{I}}\colon\\
\tau'\times\rho'\in\mathcal{F}\\
\rho'\times\sigma'\in\mathcal{F}
}}
k\big(\#\rho'+\min\{\#\tau',\#\sigma'\}
\big)
\end{align*}
operations.
\end{lemma}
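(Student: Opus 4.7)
The plan is to go line by line through Algorithm~\ref{alg:dosumexpr} and account for the arithmetic cost of each operation; the three sums in the statement will then correspond to three mutually exclusive sub-cases of the inner \texttt{if}-branch of that algorithm. A first observation that simplifies the bookkeeping is that the restriction $\mathcal{S}_{\mathcal{R}}(\tau,\sigma)|_{\tau'\times\sigma'}$ is realized purely by index shifts on the factors $\mathbf{A}_i$ and $\mathbf{B}_i^{\intercal}$, so the first line of the function contributes no floating point operations. All cost is therefore generated in the outer \texttt{for}-loop over pairs $(\mathbf{H}|_{\tau\times\rho},\mathbf{K}|_{\rho\times\sigma})\in\mathcal{S}_{\mathcal{H}}(\tau,\sigma)$ and its inner loop over $\rho'\in\children(\rho)$, and in that loop only the low-rank branch performs arithmetic; the else-branch only appends the pair $(\mathbf{H}|_{\tau'\times\rho'},\mathbf{K}|_{\rho'\times\sigma'})$ to $\mathcal{S}_{\mathcal{H}}(\tau',\sigma')$.

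Next I would classify the low-rank branch according to which of the two factors is admissible. If $\tau'\times\rho'\in\mathcal{B}\setminus\mathcal{F}$ and $\rho'\times\sigma'\in\mathcal{F}$, writing $\mathbf{K}|_{\rho'\times\sigma'}=\mathbf{L}_K\mathbf{R}_K^{\intercal}$ with $\mathbf{L}_K$ of width at most $k$, I compute $\mathbf{H}|_{\tau'\times\rho'}\mathbf{L}_K$ column-by-column using $k$ applications of the $\mathcal{H}$-matrix-vector multiplication from Algorithm~\ref{alg:prel.hVmult}, each costing $N_{\mathcal{H}\cdot v}(\tau'\times\rho',k)$; this produces the first sum. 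The symmetric case $\tau'\times\rho'\in\mathcal{F}$, $\rho'\times\sigma'\in\mathcal{B}\setminus\mathcal{F}$ is handled by transposition, multiplying $\mathbf{K}|_{\rho'\times\sigma'}^{\intercal}$ into the $k$ columns of the right factor of $\mathbf{H}|_{\tau'\times\rho'}$, producing the second sum. When both block-clusters are admissible leaves, I form $(\mathbf{L}_H\mathbf{R}_H^{\intercal})(\mathbf{L}_K\mathbf{R}_K^{\intercal})=\mathbf{L}_H(\mathbf{R}_H^{\intercal}\mathbf{L}_K)\mathbf{R}_K^{\intercal}$ by first assembling the $k\times k$ middle factor $\mathbf{R}_H^{\intercal}\mathbf{L}_K$, whose cost scales with $\#\rho'$, and then absorbing it into whichever of the two outer factors is cheaper to touch, which scales with $\min\{\#\tau',\#\sigma'\}$; collecting powers of $k$ yields the third sum.

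Summing the three sub-case contributions over $\rho'$ and using that the sub-cases are disjoint per $\rho'$ reproduces exactly the three summations in the claim; the case where both $\tau'\times\rho'$ and $\rho'\times\sigma'$ lie in $\mathcal{B}\setminus\mathcal{F}$ is absent precisely because no arithmetic is performed there. The main subtlety I expect is re-indexing the summation: the algorithm iterates over $\rho\in\mathcal{T}_{\mathcal{I}}$ with $\tau\times\rho,\rho\times\sigma\in\mathcal{B}\setminus\mathcal{F}$ and then over $\rho'\in\children(\rho)$, whereas the statement enumerates directly over $\rho'$ with $\tau'\times\rho',\rho'\times\sigma'\in\mathcal{B}$. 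One must argue that these two enumerations coincide, which follows from the level-conserving property of $\mathcal{B}$ together with the construction of $\mathcal{S}_{\mathcal{H}}(\tau,\sigma)$: every $\rho'$ appearing on the right-hand side of the statement has a unique parent $\rho=\parent(\rho')$ with $\tau\times\rho,\rho\times\sigma\in\mathcal{B}\setminus\mathcal{F}$, so each arithmetic contribution on the algorithmic side is counted exactly once in the bound.
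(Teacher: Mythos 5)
Your proposal is correct and follows essentially the same route as the paper's proof: the restriction of $\mathcal{S}_\mathcal{R}(\tau,\sigma)$ is free, the remaining $\mathcal{H}$-matrix products are split into the same four admissibility cases, and the three nontrivial cases are charged $kN_{\mathcal{H}\cdot v}(\tau'\times\rho',k)$, $kN_{\mathcal{H}\cdot v}(\rho'\times\sigma',k)$, and a term proportional to $\#\rho'+\min\{\#\tau',\#\sigma'\}$, respectively. Your explicit re-indexing argument (that summing over $\rho'\in\children(\rho)$ coincides with summing over $\rho'$ with $\tau'\times\rho',\rho'\times\sigma'\in\mathcal{B}$) is handled implicitly in the paper via the recursion identity, but is the same observation.
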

\begin{proof}
We start with recalling that $\mathcal{S}(\tau',\sigma')$ is
recursively given as
\begin{align*}
\mathcal{S}(\tau',\sigma')={}&
\mathcal{S}(\tau,\sigma)|_{\tau'\times\sigma'}\\
={}&
\mathcal{S}_\mathcal{R}(\tau,\sigma)|_{\tau'\times\sigma'}+
\mathcal{S}_\mathcal{H}(\tau,\sigma)|_{\tau'\times\sigma'}\\
={}&
\mathcal{S}_\mathcal{R}(\tau,\sigma)|_{\tau'\times\sigma'}+
\sum_{\substack{
\rho'\in\mathcal{T}_{\mathcal{I}}\colon\\
\tau'\times\rho'\in\mathcal{B}\\
\rho'\times\sigma'\in\mathcal{B}
}}
\mathbf{H}|_{\tau'\times\rho'}\mathbf{H}|_{\rho'\times\sigma'},
\numberthis\label{eq:recursion}
\end{align*}
with $\mathcal{S}_\mathcal{R}(\mathcal{I},\mathcal{I})=\emptyset$. Clearly,
the restriction of $\mathcal{S}_\mathcal{R}(\tau,\sigma)$ to
$\tau'\times\sigma'$ comes for free, and we only have to look at
the remaining sum. It can be decomposed
into
\begin{align}
\sum_{\substack{
\rho'\in\mathcal{T}_{\mathcal{I}}\colon\\
\tau'\times\rho'\in\mathcal{B}\\
\rho'\times\sigma'\in\mathcal{B}
}}
\mathbf{H}|_{\tau'\times\rho'}\mathbf{H}|_{\rho'\times\sigma'}
={}&
\sum_{\substack{
\rho'\in\mathcal{T}_{\mathcal{I}}\colon\\
\tau'\times\rho'\in\mathcal{B\setminus F}\\
\rho'\times\sigma'\in\mathcal{B\setminus F}
}}
\mathbf{H}|_{\tau'\times\rho'}\mathbf{H}|_{\rho'\times\sigma'}\label{eq:prodexrp1}\\\
&{}\quad+
\sum_{\substack{
\rho'\in\mathcal{T}_{\mathcal{I}}\colon\\
\tau'\times\rho'\in\mathcal{B\setminus F}\\
\rho'\times\sigma'\in\mathcal{F}
}}
\mathbf{H}|_{\tau'\times\rho'}\mathbf{H}|_{\rho'\times\sigma'}\label{eq:prodexrp2}\\\
&{}\quad+
\sum_{\substack{
\rho'\in\mathcal{T}_{\mathcal{I}}\colon\\
\tau'\times\rho'\in\mathcal{F}\\
\rho'\times\sigma'\in\mathcal{B\setminus F}
}}
\mathbf{H}|_{\tau'\times\rho'}\mathbf{H}|_{\rho'\times\sigma'}\label{eq:prodexrp3}\\
&{}\quad+
\sum_{\substack{
\rho'\in\mathcal{T}_{\mathcal{I}}\colon\\
\tau'\times\rho'\in\mathcal{F}\\
\rho'\times\sigma'\in\mathcal{F}
}}
\mathbf{H}|_{\tau'\times\rho'}\mathbf{H}|_{\rho'\times\sigma'}\label{eq:prodexrp4}.
\end{align}
The products on the right-hand side in \eqref{eq:prodexrp1} are not computed, thus, no
numerical effort has to be made. The products in
\eqref{eq:prodexrp2}, \eqref{eq:prodexrp3}, and
\eqref{eq:prodexrp4} must be computed and stored as low-rank matrices.
The computational effort for this operation is
$kN_{\mathcal{H}\cdot v}(\tau'\times\rho',k)$ for
\eqref{eq:prodexrp2}, $kN_{\mathcal{H}\cdot v}(\rho'\times\sigma',k)$
for \eqref{eq:prodexrp3}, and $k^2(\#\rho'+\min\{\#\tau',\#\sigma'\})$
for \eqref{eq:prodexrp4}. Thus, given $\mathcal{S}(\tau,
\sigma)$, the computational cost to compute $\mathcal{S}(\tau',
\sigma')$ is
\begin{align*}
\sum_{\substack{
\rho'\in\mathcal{T}_{\mathcal{I}}\colon\\
\tau'\times\rho'\in\mathcal{B\setminus F}\\
\rho'\times\sigma'\in\mathcal{F}
}}
kN_{\mathcal{H}\cdot v}(\tau'\times\rho',k)+
&{}
\sum_{\substack{
\rho'\in\mathcal{T}_{\mathcal{I}}\colon\\
\tau'\times\rho'\in\mathcal{F}\\
\rho'\times\sigma'\in\mathcal{B\setminus F}
}}
kN_{\mathcal{H}\cdot v}(\rho'\times\sigma',k)\\
&{}\qquad\qquad+
\sum_{\substack{
\rho'\in\mathcal{T}_{\mathcal{I}}\colon\\
\tau'\times\rho'\in\mathcal{F}\\
\rho'\times\sigma'\in\mathcal{F}
}}
k\big(\#\rho'+\min\{\#\tau',\#\sigma'\}
\big),
\end{align*}
which proves the assertion.
\end{proof}

\begin{lemma}\label{lem:compsumexprtot}
The $\mathcal{H}$-matrix multiplication as given by Algorithm~\ref{alg:theHmult}
requires at most
\[
N_{\mathcal{S}}(\mathcal{B})\leq 16\Csp^3k\max\{k,n_{\min}\}(p+1)^2\#\mathcal{I}
\]
operations for the computation of the \sumexpr-expressions.
\end{lemma}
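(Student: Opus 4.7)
The strategy is to plug the per-block estimate from Lemma~\ref{lem:compsumexprrec} into a sum over all block-clusters visited by Algorithm~\ref{alg:theHmult}, and then exploit two standard structural facts about the block-cluster tree: the sparsity bound coming from $\Csp$, and the level-conserving property together with the fact that clusters on a fixed level partition $\mathcal{I}$.

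\textbf{Step 1 (Bound each summand).} First I would substitute the matrix--vector complexity estimate recalled in Section~\ref{sec:pre},
\[
N_{\mathcal{H}\cdot v}(\tau'\times\rho',k)\leq 2\Csp\max\{k,n_{\min}\}(p+1)(\#\tau'+\#\rho'),
\]
into the three sums appearing in Lemma~\ref{lem:compsumexprrec}. Each of the first two sums thereby becomes a bound of the form $2k\Csp\max\{k,n_{\min}\}(p+1)(\#\tau'+\#\rho')$ (respectively $(\#\rho'+\#\sigma')$), while the third sum contributes only $k(\#\rho'+\min\{\#\tau',\#\sigma'\})$ per term, which is dominated by (and therefore absorbed into) the first two.

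\textbf{Step 2 (Local sparsity).} For any fixed $\tau'\times\sigma'\in\mathcal{B}$, the definition of $\Csp$ implies that the number of $\rho'\in\mathcal{T}_{\mathcal{I}}$ with $\tau'\times\rho'\in\mathcal{B}$ is at most $\Csp$, and similarly for $\rho'\times\sigma'\in\mathcal{B}$. Hence each of the three sums in Lemma~\ref{lem:compsumexprrec} has at most $\Csp$ terms.

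\textbf{Step 3 (Global sum over $\mathcal{B}$).} Now I would sum $N_{\mathrm{update},\mathcal{S}}(\tau',\sigma')$ over all $\tau'\times\sigma'\in\mathcal{B}$, organising the sum by level $\ell=0,\ldots,p$. On each level the clusters $\tau'\in\mathcal{T}_{\mathcal{I}}$ partition $\mathcal{I}$, so $\sum_{\tau'\text{ on level }\ell}\#\tau'=\#\mathcal{I}$. For a fixed $\tau'$ the sparsity bound yields at most $\Csp$ partners $\sigma'$ and, for each such pair, at most $\Csp$ auxiliary indices $\rho'$. Exchanging the order of summation turns each of the three contributions $\#\tau'$, $\#\sigma'$, $\#\rho'$ into at most $\Csp^{2}\cdot\#\mathcal{I}$ per level (the $\#\rho'$ contribution here uses the level-conserving structure of $\mathcal{B}$, so that $\rho'$ lives on the same level as $\tau'$ and $\sigma'$). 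Multiplying by the $(p+1)$ levels yields one extra factor of $(p+1)$. Combining all constants from Steps~1--3, including the factor $2$ from the matrix--vector bound and the aggregation over the three sums in Lemma~\ref{lem:compsumexprrec}, gives
\[
N_{\mathcal{S}}(\mathcal{B})\leq 16\Csp^{3}k\max\{k,n_{\min}\}(p+1)^{2}\#\mathcal{I},
\]
which is the claim.

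\textbf{Main obstacle.} The calculus is elementary; the subtlety is the bookkeeping in Step~3. One has to verify that summing $\#\rho'$ across all triples $(\tau',\sigma',\rho')$ behaves like summing $\#\tau'$, and this rests crucially on $\mathcal{B}$ being level-conserving. Pinning down the constant $16$ requires keeping careful track of the factors of two arising from both the $(\#\tau+\#\sigma)$ form of the matrix--vector bound and the separate contributions of the three sums in Lemma~\ref{lem:compsumexprrec}; a looser bookkeeping would only yield a worse numerical constant but would not affect the asymptotic statement.
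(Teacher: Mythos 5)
Your proposal is correct and follows essentially the same route as the paper's proof: plug the matrix--vector cost bound into the per-block estimate of Lemma~\ref{lem:compsumexprrec}, use the sparsity constant $\Csp$ to bound the number of partners $\sigma'$ and auxiliary clusters $\rho'$, and control the resulting cluster-size sums via the level-wise partition of $\mathcal{I}$ (the paper invokes $\sum_{\tau\times\sigma\in\mathcal{B}}(\#\tau+\#\sigma)\leq 2\Csp(p+1)\#\mathcal{I}$ from the literature, which is exactly what your level-by-level bookkeeping re-derives). The only cosmetic differences are that the paper merges the first two sums by symmetry into a single term with a factor $2$ and estimates the purely low-rank third sum separately as a lower-order contribution, whereas you absorb it into the dominant terms; either way the constant $16$ is met with room to spare.
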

\begin{proof}
We consider a block-cluster $\tau\times\sigma\in\mathcal{B}\setminus\mathcal{F}$
on level $j$. Then, the numerical effort to compute the \sumexpr-expression
$\mathcal{S}(\tau',\sigma')$ for $\tau'\times\sigma'\in\children(\tau\times\sigma)$
is estimated by Lemma~\ref{lem:compsumexprrec}, if the
\sumexpr-expression $\mathcal{S}(\tau,\sigma)$ is known. Therefore, it is sufficient
to sum over all block-clusters in $\mathcal{B}$ as follows:
\begin{align*}
&\sum_{\tau\times\sigma\in\mathcal{B}}N_{\mathrm{update},\mathcal{S}}(\tau,\sigma)\\
&\qquad\leq
\sum_{\tau\times\sigma\in\mathcal{B}}\bigg(
2\sum_{\substack{
\rho\in\mathcal{T}_{\mathcal{I}}\colon\\
\tau\times\rho\in\mathcal{B\setminus F}\\
\rho\times\sigma\in\mathcal{F}
}}
kN_{\mathcal{H}\cdot v}(\tau\times\rho,k)+
\sum_{\substack{
\rho\in\mathcal{T}_{\mathcal{I}}\colon\\
\tau\times\rho\in\mathcal{F}\\
\rho\times\sigma\in\mathcal{F}
}}
k\big(\#\rho+\min\{\#\tau,\#\sigma\}
\big)
\bigg)\\
&\qquad=
2\sum_{\tau\times\sigma\in\mathcal{B}}
\sum_{\substack{
\rho\in\mathcal{T}_{\mathcal{I}}\colon\\
\tau\times\rho\in\mathcal{B\setminus F}\\
\rho\times\sigma\in\mathcal{F}
}}
kN_{\mathcal{H}\cdot v}(\tau\times\rho,k)+
\sum_{\tau\times\sigma\in\mathcal{B}}
\sum_{\substack{
\rho\in\mathcal{T}_{\mathcal{I}}\colon\\
\tau\times\rho\in\mathcal{F}\\
\rho\times\sigma\in\mathcal{F}
}}
k\big(\#\rho+\min\{\#\tau,\#\sigma\}
\big).
\end{align*}
To estimate the first sum, consider
\begin{align*}
\sum_{\tau\times\sigma\in\mathcal{B}}
\sum_{\substack{
\rho\in\mathcal{T}_{\mathcal{I}}\colon\\
\tau\times\rho\in\mathcal{B\setminus F}\\
\rho\times\sigma\in\mathcal{F}
}}
kN_{\mathcal{H}\cdot v}(\tau\times\rho,k)
\leq{}&
\sum_{\tau\times\sigma\in\mathcal{B}}
\sum_{\substack{
	\rho\in\mathcal{T}_{\mathcal{I}}\colon\\
	\tau\times\rho\in\mathcal{B\setminus F}
}}
kN_{\mathcal{H}\cdot v}(\tau\times\rho,k)\\
\leq{}&
\Csp
\sum_{\tau\times\rho\in\mathcal{B}}
kN_{\mathcal{H}\cdot v}(\tau\times\rho,k)\\
\leq{}&
2\Csp^2 k\max\{k,n_{\min}\}(p+1)\sum_{\tau\times\rho\in\mathcal{B}}(\#\tau+\#\rho)\\
\leq{}&4\Csp^3k\max\{k,n_{\min}\}(p+1)^2\#\mathcal{I},
\end{align*}
due to the fact that
\[
\sum _{\tau\times\sigma\in\mathcal{B}}(\#\tau+\#\sigma)\leq 2\Csp (p+1)\#\mathcal{I},
\]
see, e.g., \cite[Lemma 2.4]{GH03}. Since the second sum can be estimated by
\begin{align*}
\sum_{\tau\times\sigma\in\mathcal{B}}
\sum_{\substack{
\rho\in\mathcal{T}_{\mathcal{I}}\colon\\
\tau\times\rho\in\mathcal{F}\\
\rho\times\sigma\in\mathcal{F}
}}
k\big(\#\rho+\min\{\#\tau,\#\sigma\}
\big)
\leq{}&
\sum_{\substack{
\tau\times\rho\in\mathcal{B}\\
\rho\times\sigma\in\mathcal{B}
}}
k\big(\#\rho+\min\{\#\tau,\#\sigma\}
\big)\\
\leq{}&
\sum_{\substack{
\tau\times\rho\in\mathcal{B}\\
\rho\times\sigma\in\mathcal{B}
}}
k\big(2\#\rho+\#\tau+\#\sigma
\big)\\
\leq{}&
2\Csp k(p+1)\#\mathcal{I},
\end{align*}
summing up yields the assertion.
\end{proof}

\begin{lemma}\label{lem:compsumexprmv}
For any block-cluster $\tau\times\sigma\in\mathcal{B}$, the
multiplication of $\mathcal{S}(\tau,\sigma)$ with a vector of
length $\#\sigma$ can be accomplished in at most
\[
4\Csp \max\{k, n_{\min}\} (p+1)\sum_{\substack{
\rho\in\mathcal{T}_{\mathcal{I}}\colon\\
\tau\times\rho\in\mathcal{B\setminus F}\\
\rho\times\sigma\in\mathcal{B\setminus F}
}}(\#\tau+\#\rho+\#\sigma)
\]
operations.
\end{lemma}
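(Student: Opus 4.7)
The plan is to split $\mathcal{S}(\tau,\sigma)\mathbf{v}$ according to Definition~\ref{def:sumexpr} as
\[
\mathcal{S}(\tau,\sigma)\mathbf{v}
=\sum_{j\in\mathcal{J}_\mathcal{R}}\mathbf{A}_j\bigl(\mathbf{B}_j^\intercal\mathbf{v}\bigr)
+\sum_{j\in\mathcal{J}_\mathcal{H}}\mathbf{H}_j\bigl(\mathbf{K}_j\mathbf{v}\bigr),
\]
and to bound each part by means of the $\mathcal{H}$-matrix--vector estimate recalled at the end of Section~\ref{sec:pre}. For the summand of the $\mathcal{S}_\mathcal{H}$-contribution indexed by an intermediate cluster $\rho_j$ with $\tau\times\rho_j,\rho_j\times\sigma\in\mathcal{B}\setminus\mathcal{F}$ (which is the precise form produced by Algorithm~\ref{alg:dosumexpr}), I would first apply Algorithm~\ref{alg:prel.hVmult} to $\mathbf{K}_j=\mathbf{K}|_{\rho_j\times\sigma}$ and $\mathbf{v}$, and then to $\mathbf{H}_j=\mathbf{H}|_{\tau\times\rho_j}$ and the resulting vector, at a cost of at most
\[
N_{\mathcal{H}\cdot v}(\rho_j\times\sigma,k)+N_{\mathcal{H}\cdot v}(\tau\times\rho_j,k).
\]

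Using $\depth(\#\alpha),\depth(\#\beta)\leq p$, the block-wise bound from Section~\ref{sec:pre} simplifies to
$N_{\mathcal{H}\cdot v}(\alpha\times\beta,k)\leq 2\Csp\max\{k,n_{\min}\}(p+1)(\#\alpha+\#\beta)$, so the per-pair cost is at most
\[
2\Csp\max\{k,n_{\min}\}(p+1)(\#\tau+2\#\rho_j+\#\sigma)
\leq 4\Csp\max\{k,n_{\min}\}(p+1)(\#\tau+\#\rho_j+\#\sigma).
\]
Summing over the intermediate clusters $\rho_j$ satisfying $\tau\times\rho_j,\rho_j\times\sigma\in\mathcal{B}\setminus\mathcal{F}$ immediately produces the right-hand side of the asserted bound.

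The step I expect to require the most care is the treatment of the low-rank summands indexed by $\mathcal{J}_\mathcal{R}$: each such $\mathbf{A}_j\mathbf{B}_j^\intercal$ is produced at some ancestor block-cluster by Algorithm~\ref{alg:dosumexpr} from an admissible pair $(\tau'\times\rho',\rho'\times\sigma')$ and has rank at most $k$, so a single matrix-vector application costs at most $2k(\#\tau+\#\sigma)$. The task is to match every such low-rank contribution to an intermediate cluster $\rho$ with $\tau\times\rho,\rho\times\sigma\in\mathcal{B}\setminus\mathcal{F}$ that is already accounted for in the sum on the right-hand side, using the sparsity constant $\Csp$ and the level structure of $\mathcal{B}$ in the same spirit as in the proof of Lemma~\ref{lem:compsumexprtot}, so as to conclude that these terms are dominated by (a constant multiple of) the $\mathcal{S}_\mathcal{H}$-contribution estimated above. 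Once this bookkeeping is carried out, the claim follows by combining the two partial estimates and absorbing the constants.
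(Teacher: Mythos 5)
Your treatment of the $\mathcal{S}_\mathcal{H}$-part coincides with the paper's: apply Algorithm~\ref{alg:prel.hVmult} twice per pair, bound $N_{\mathcal{H}\cdot v}(\alpha\times\beta,k)$ by $2\Csp\max\{k,n_{\min}\}(p+1)(\#\alpha+\#\beta)$, and use $\#\tau+2\#\rho+\#\sigma\leq 2(\#\tau+\#\rho+\#\sigma)$. The gap is exactly where you flag it: the $\mathcal{J}_\mathcal{R}$-terms are never actually estimated, and the strategy you sketch for them --- matching each $\mathbf{A}_j\mathbf{B}_j^\intercal$ to an intermediate cluster $\rho$ with $\tau\times\rho,\rho\times\sigma\in\mathcal{B}\setminus\mathcal{F}$ appearing in the displayed sum --- is not the right mechanism. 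A low-rank term in $\mathcal{S}_\mathcal{R}(\tau,\sigma)$ originates from a cluster $\rho'$ at some ancestor level for which $\tau''\times\rho'$ or $\rho'\times\sigma''$ became admissible, and it is precisely the descendants of such $\rho'$ that have been removed from $\mathcal{S}_\mathcal{H}$; the clusters $\rho$ still present in the current sum therefore bear no natural relation to the origins of the low-rank terms, and there is no obvious bounded-multiplicity assignment from $\mathcal{J}_\mathcal{R}$ to the index set of that sum.

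What the paper does instead is a direct count along the path from the root: each of the $\level(\tau\times\sigma)\leq p+1$ recursion steps from $\mathcal{S}\big(\parent(\tau),\parent(\sigma)\big)$ to $\mathcal{S}(\tau,\sigma)$ adds at most $\Csp$ new low-rank matrices, since for fixed $\tau$ there are at most $\Csp$ clusters $\rho'$ with $\tau\times\rho'\in\mathcal{B}$, and $\mathcal{S}_\mathcal{R}(\mathcal{I},\mathcal{I})=\emptyset$. Hence $\#\mathcal{J}_\mathcal{R}\leq\Csp(p+1)$ and the low-rank part costs at most $\Csp k(p+1)(\#\tau+\#\sigma)$. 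This is then absorbed into the slack left by your own estimate of the $\mathcal{H}$-part: per summand, $4(\#\tau+\#\rho+\#\sigma)-2(\#\tau+2\#\rho+\#\sigma)=2(\#\tau+\#\sigma)$, which multiplied by $\Csp\max\{k,n_{\min}\}(p+1)$ dominates $\Csp k(p+1)(\#\tau+\#\sigma)$. With this counting argument supplied your proof closes and yields the stated constant $4$; without it, the cost of the low-rank summands remains unproven, and no constant-multiple fudging at the end can recover the claimed bound, since the $\mathcal{H}$-part alone already exhausts the factor $4\Csp$.
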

\begin{proof}
We first estimate the number of elements in
$\mathcal{S}_\mathcal{R}(\tau,\sigma)$ and $\mathcal{S}_\mathcal{H}(\tau,\sigma)$.
Therefore, we remark that, for fixed $\tau$, there are at most
$\Csp$ block-cluster pairs
$\tau\times\rho$ in
$\mathcal{B}$ and that the same consideration holds for $\sigma$.
Thus, looking at the recursion \eqref{eq:recursion},
the recursion step from $\mathcal{S}\big(\parent(\tau),\parent(\sigma)\big)$
to $\mathcal{S}(\tau,\sigma)$ adds at most $\Csp$ low-rank
matrices. Thus, considering that
$\mathcal{S}(\mathcal{I},\mathcal{I})=\emptyset$, we have at most
$\Csp\level(\tau\times\sigma)$ low-rank matrices in 
$\mathcal{S}(\tau,\sigma)$. Summing up, the multiplication
of $\mathcal{S}(\tau,\sigma)$ with a vector requires at most
\begin{align*}
&\Csp k\level(\tau\times\sigma)(\#\tau+\#\sigma)+
\sum_{\substack{
\rho\in\mathcal{T}_{\mathcal{I}}\colon\\
\tau\times\rho\in\mathcal{B\setminus F}\\
\rho\times\sigma\in\mathcal{B\setminus F}
}}\Big(N_{\mathcal{H}\cdot v}(\tau\times\rho,k)+N_{\mathcal{H}\cdot v}(\rho\times\sigma,k)\Big)\\
&\qquad\qquad\leq \Csp k(p+1)(\#\tau+\#\sigma)\\
&\qquad\qquad\qquad\qquad+\sum_{\substack{
\rho\in\mathcal{T}_{\mathcal{I}}\colon\\
\tau\times\rho\in\mathcal{B\setminus F}\\
\rho\times\sigma\in\mathcal{B\setminus F}
}}2\Csp \max\{k, n_{\min}\} (p+1)(\#\tau+2\#\rho+\#\sigma)\\
&\qquad\qquad\leq 4\Csp \max\{k, n_{\min}\} (p+1)\sum_{\substack{
\rho\in\mathcal{T}_{\mathcal{I}}\colon\\
\tau\times\rho\in\mathcal{B\setminus F}\\
\rho\times\sigma\in\mathcal{B\setminus F}
}}(\#\tau+\#\rho+\#\sigma)
\end{align*}
operations.
\end{proof}

With the help of the previous lemmata, we are now able to state our
main result, which estimates the number of operations for the
$\mathcal{H}$-matrix multiplication from  Algorithm~\ref{alg:theHmult}.
\begin{theorem}
Assuming that the range approximation scheme $\mathfrak{T}(\mathbf{M},k^\star)$
requires $\ell k^\star$, $\ell\geq 1$, matrix-vector multiplications of $\mathbf{M}$ and
additionally $N_{\mathfrak{T}}(k^{\star})$ operations to find a rank $k^\star$
approximation to a matrix $\mathbf{M}\in\mathbb{R}^{m\times n}$, then the
$\mathcal{H}$-matrix multiplication as stated in Algorithm~\ref{alg:theHmult}
requires at most
\[
8\Csp^2 (\ell k^{\star}+n_{\min}+2\Csp)k\max\{k, n_{\min}\} (p+1)^2\#\mathcal{I}+\Csp(2\#\mathcal{I}-1) N_{\mathfrak{T}}(k^{\star})
\]
operations.
\end{theorem}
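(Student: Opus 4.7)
The plan is to split the total operation count of Algorithm~\ref{alg:theHmult} according to the three possible actions of its recursion at a block-cluster $\tau\times\sigma$: a call to \textsc{restrict} when $\tau\times\sigma$ is not a leaf, a call to $\mathfrak{T}$ when $\tau\times\sigma\in\mathcal{F}$, and a call to \textsc{evaluate} when $\tau\times\sigma\in\mathcal{N}$. Summing the \textsc{restrict}-costs over all block-clusters is exactly what Lemma~\ref{lem:compsumexprtot} delivers, yielding the bound $16\Csp^3 k\max\{k,n_{\min}\}(p+1)^2\#\mathcal{I}$. Rewriting this as $8\Csp^2\cdot 2\Csp\cdot k\max\{k,n_{\min}\}(p+1)^2\#\mathcal{I}$ produces the $2\Csp$-contribution inside the parenthesis of the claimed bound.

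For the truncation step, I would invoke the assumption on $\mathfrak{T}$: each call at $\tau\times\sigma\in\mathcal{F}$ costs $\ell k^\star$ matrix-vector products with $\mathcal{S}(\tau,\sigma)$ plus $N_{\mathfrak{T}}(k^\star)$ auxiliary operations, and a single matrix-vector product is bounded by Lemma~\ref{lem:compsumexprmv}. The evaluation step at a nearfield leaf is treated identically, except that the $\ell k^\star$ matrix-vector products are replaced by $\min\{\#\tau,\#\sigma\}\leq n_{\min}$ products, which is enough to reconstruct the dense block column-by-column. Collecting the $N_{\mathfrak{T}}$-overheads then gives $\#\mathcal{F}\cdot N_{\mathfrak{T}}(k^\star)\leq\Csp(2\#\mathcal{I}-1)N_{\mathfrak{T}}(k^\star)$ via $\#\mathcal{F}\leq\#\mathcal{B}\leq\Csp\,\#\mathcal{T}_{\mathcal{I}}\leq\Csp(2\#\mathcal{I}-1)$; this is precisely the second summand of the claimed bound.

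The main obstacle is handling the remaining matrix-vector cost. Combining the bound from Lemma~\ref{lem:compsumexprmv} across all leaves leads to a double sum of the form
\[
\sum_{\tau\times\sigma\in\mathcal{L}(\mathcal{B})}\;\sum_{\substack{\rho\in\mathcal{T}_{\mathcal{I}}\colon\\\tau\times\rho,\rho\times\sigma\in\mathcal{B}\setminus\mathcal{F}}}\bigl(\#\tau+\#\rho+\#\sigma\bigr),
\]
which has to be controlled by $\mathcal{O}(\Csp^2(p+1)\#\mathcal{I})$. I would treat the three summands $\#\tau$, $\#\rho$, $\#\sigma$ separately: for each, swap the order of summation, use the sparsity estimate that any cluster participates in at most $\Csp$ block-clusters (so each fixed cluster yields at most $\Csp^2$ admissible triples $(\tau,\sigma,\rho)$), and then apply the standard level-count estimate $\sum_{\tau\in\mathcal{T}_{\mathcal{I}}}\#\tau\leq(p+1)\#\mathcal{I}$.

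Multiplying the resulting estimate by the prefactor $4\Csp\max\{k,n_{\min}\}(p+1)$ from Lemma~\ref{lem:compsumexprmv}, and then by the per-leaf factor $\ell k^\star$ in the truncation case and $n_{\min}$ in the evaluation case, collapses the matrix-vector contributions into $8\Csp^2\ell k^\star\cdot k\max\{k,n_{\min}\}(p+1)^2\#\mathcal{I}$ and $8\Csp^2 n_{\min}\cdot k\max\{k,n_{\min}\}(p+1)^2\#\mathcal{I}$, respectively. Adding these to the \textsc{restrict}-cost treated above and to the $N_{\mathfrak{T}}$-overhead then assembles the three contributions into the stated bound.
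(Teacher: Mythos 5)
Your proposal follows essentially the same route as the paper's proof: it uses Lemma~\ref{lem:compsumexprtot} for the \textsc{restrict} costs (giving the $2\Csp$ term), Lemma~\ref{lem:compsumexprmv} with $\ell k^\star$ respectively $n_{\min}$ matrix-vector products per farfield respectively nearfield leaf, the bound $\#\mathcal{F}\leq\Csp(2\#\mathcal{I}-1)$ for the $N_{\mathfrak{T}}$ overhead, and the same sparsity-plus-level-count argument to control the double sum. The only cosmetic difference is that the paper enlarges the inner sum to all pairs $\tau\times\rho,\rho\times\sigma\in\mathcal{B}$ and quotes the estimate from the proof of Lemma~\ref{lem:compsumexprtot} rather than re-deriving it by swapping the order of summation, so your argument assembles the identical bound.
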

\begin{proof}
We start by estimating the number of operations for a single far-field block
$\tau\times\sigma\in\mathcal{F}$. Using Lemma~\ref{lem:compsumexprmv}, this
requires at most
\[
4\Csp \ell k^{\star}\max\{k, n_{\min}\} (p+1)\sum_{\substack{
\rho\in\mathcal{T}_{\mathcal{I}}\colon\\
\tau\times\rho\in\mathcal{B\setminus F}\\
\rho\times\sigma\in\mathcal{B\setminus F}
}}(\#\tau+\#\rho+\#\sigma)
+N_{\mathfrak{T}}(k^{\star})
\]
operations. Summing up over all farfield blocks yields an effort of at most
\begin{align*}
&{}\sum_{\tau\times\sigma\in\mathcal{F}}
\bigg(
4\Csp \ell k^{\star}\max\{k, n_{\min}\} (p+1)\sum_{\substack{
\rho\in\mathcal{T}_{\mathcal{I}}\colon\\
\tau\times\rho\in\mathcal{B\setminus F}\\
\rho\times\sigma\in\mathcal{B\setminus F}
}}(\#\tau+\#\rho+\#\sigma)
+N_{\mathfrak{T}}(k^{\star})
\bigg)\\
&{}\qquad\qquad\leq
4\Csp \ell k^{\star}\max\{k, n_{\min}\} (p+1)\sum_{\substack{
\tau\times\rho\in\mathcal{B}\\
\rho\times\sigma\in\mathcal{B}
}}(\#\tau+2\#\rho+\#\sigma)+
\sum_{\tau\times\sigma\in\mathcal{F}}N_{\mathfrak{T}}(k^{\star})\\
&{}\qquad\qquad\leq 8\Csp^2 \ell k^{\star}k\max\{k, n_{\min}\} (p+1)^2\#\mathcal{I}+\Csp(2\#\mathcal{I}-1) N_{\mathfrak{T}}(k^{\star}),
\end{align*}
where we have used that $\#\mathcal{F}\leq \Csp(2\#\mathcal{I}-1)$,
cf.~\cite[Lemma 6.11]{Hac15}. Assuming that a nearfield block $\tau\times\sigma
\in\mathcal{N}$ is computed by applying its corresponding
\sumexpr-expression $\mathcal{S}(\tau,\sigma)$ to an identity matrix of size at
most $n_{\min}\times n_{\min}$, the nearfield blocks of the $\mathcal{H}$-matrix
product can be computed in at most
\begin{align*}
8\Csp^2 k\max\{k, n_{\min}\}n_{\min} (p+1)^2\#\mathcal{I}
\end{align*}
operations. Summing up the operations for the farfield, the nearfield, and the
operations of the \sumexpr-expressions from Lemma~\ref{lem:compsumexprtot}
yields the assertion.
\end{proof}
We remark that, with the convention $p=\depth(\mathcal{B})=\mathcal{O}(\log(\#\mathcal{I}))$,
the previous theorem states that the $\mathcal{H}$-matrix multiplication from
Algorithm~\ref{alg:theHmult} has an almost linear cost with respect to $\#\mathcal{I}$.
To that end, for given $\mathcal{I}$, we require that the block-wise ranks of the
$\mathcal{H}$-matrix are bounded by a constant $k^\star$. That constant may depend on
$\mathcal{I}$. According to \cite{DHS17}, this is reasonable for $\mathcal{H}$-matrices
which arise from the discretization of pseudodifferential operators. In particular, it is
well known that $k^\star$ depends poly-logarithmically on $\#\mathcal{I}$ for suitable
approximation accuracies $\varepsilon$.
\section{Numerical examples}\label{sec:results}
The following section is dedicated to a comparison of the new
$\mathcal{H}$-matrix multiplication to the original algorithm from \cite{Hack1}, see also Section~\ref{sec:fasttrunc}. Besides the
comparison between these two algorithms, we also compare different
truncation operators. The considered configurations are listed in
Table~\ref{tab:expcases}.
Note that, in order to compute the dense SVD of a
\sumexpr-expression, we compute the SVD of the \sumexpr-expression
applied to a dense identity matrix.

\begin{table}[htb]
\begin{tabular}{|c|c|c|}
\hline
 & traditional multiplication & new multiplication\\\hline
\parbox[c]{0.2\textwidth}{\centering sum of low-rank matrices} &
\parbox[c]{0.35\textwidth}{\centering truncation with SVD of low-rank matrices, see Algorithm~\ref{alg:lrsvd}} & --- \\\hline
\parbox[c]{0.2\textwidth}{\centering conversion of products of $\mathcal{H}$-matrix blocks to low rank} & 
\parbox[c]{0.35\textwidth}{
\begin{itemize}[noitemsep,leftmargin=2ex]
\item Hierarchical approximation, see \cite{Hack1}
\item ACA, see Section~\ref{sec:ACA}
\item BiLanczos, see Section~\ref{sec:lanczos}
\item Randomized, see Setion~\ref{sec:randomized}
\item Reference: dense SVD
\end{itemize}
} & ---\\\hline
\parbox[c]{0.2\textwidth}{\centering conversion of \sumexpr-expressions to low rank}&
\parbox[c]{0.35\textwidth}{
\centering combination of the operations above, see discussion in
Section~\ref{sec:Hmult}
}&
\parbox[c]{0.35\textwidth}{
\begin{itemize}[noitemsep,leftmargin=2ex]
\item ACA, see Section~\ref{sec:ACA}
\item BiLanczos, see Section~\ref{sec:lanczos}
\item Randomized, see Setion~\ref{sec:randomized}
\item Reference: dense SVD
\end{itemize}
}\\\hline
\end{tabular}
\caption{\label{tab:expcases}Considered configurations of the $\mathcal{H}$-matrix multiplication for the numerical experiments.}
\end{table}


We would like to compare the different configurations in terms of
speed and accuracy. For computational efficiency, the traditional
$\mathcal{H}$-matrix arithmetic puts an upper bound $k_{\max}$ on the
truncation rank. However, for computational accuracy, one should
rather put an upper bound on the truncation error. This is also referred
to as $\varepsilon$-rank, where the truncation is with respect to
the smallest possible rank such that the relative truncation error is smaller than $\varepsilon$.
We perform the experiments for both types of bounds, with $k_{\max}=16$ and
$\varepsilon=10^{-12}$.

\subsection{Example with exponential kernels}
For our first numerical example, we consider the Galerkin discretizations
$\mathbf{K}_1$ and $\mathbf{K}_2$ of an exponential kernel
\[
k_1(\mathbf{x},\mathbf{y})=\exp(-\|\mathbf{x}-\mathbf{y}\|)
\]
and a scaled exponential kernel
\[
k_2(\mathbf{x},\mathbf{y})=x_1\exp(-\|\mathbf{x}-\mathbf{y}\|)
\]
on the unit sphere with boundary $\Gamma$. This means that, for a given finite element space
$V_N=\spann\{\varphi_1,\ldots,\varphi _N\}$ on $\Gamma$, the system matrices are given by
\[
\big[\mathbf{K}_{\ell}\big]_{ij}
=
\int_\Gamma\int_\Gamma k_\ell(\mathbf{x},\mathbf{y})\varphi_j(\mathbf{x})\varphi_i(\mathbf{y})\de\sigma_\mathbf{x}\de\sigma_\mathbf{y}
\]
for all $i,j=1,\ldots, N$ and $\ell=1,2$. 

It is then well known that
$\mathbf{K}_1$, $\mathbf{K}_2$ and their product $\mathbf{K}_1\mathbf{K}_2$ is
compressible by means of $\mathcal{H}$-matrices, see \cite{DHS17,Hac15}. For our numerical experiments,
we assemble the matrices by using piecewise constant finite elements and adaptive
cross approximation as described in \cite{HP2013}.
The computations have been carried out on a single core of
a compute server with two Intel(R) Xeon(R) E5-2698v4 CPUs with a clock rate of
2.20GHz and a main memory of 756GB. The backend for the linear algebra routines
is version 3.2.8 of the software library Eigen, see \cite{eigenweb}.

Figure~\ref{fig:esetfr} depicts the computation times per degree of freedom
for the different kinds of $\mathcal{H}$-matrix multiplication for the fixed
rank truncation, whereas Figure~\ref{fig:eseter} shows the computations
times for the $\varepsilon$-rank truncation. The cost of the fixed rank
truncation seems to be $\mathcal{O}(N\log(N)^2)$, in accordance with the
theoretical cost estimates. We can also immediately infer that it pays off to
replace the hierarchical approximation by the alternative low-rank approximation
schemes to improve computation times. For the $\varepsilon$-rank truncation, no
cost estimates are known. While the asymptotic behaviour of the
computation times for the traditional multiplication seems to be in the preasymptotic
regime in this case, the new multiplication scales almost linearly.
Another important point to remark is that the new algorithm with
$\varepsilon$-rank truncation seems to outperform the frequently used traditional
$\mathcal{H}$-matrix multiplication in terms of computational efficiency.
Therefore, we shall now look whether it is also competitive in terms of
accuracy.
\begin{figure}
\scalefig{
%
%
\definecolor{mycolor1}{rgb}{0.00000,0.44700,0.74100}%
\definecolor{mycolor2}{rgb}{0.85000,0.32500,0.09800}%
\definecolor{mycolor3}{rgb}{0.92900,0.69400,0.12500}%
\definecolor{mycolor4}{rgb}{0.49400,0.18400,0.55600}%
\definecolor{mycolor5}{rgb}{0.46600,0.67400,0.18800}%
\begin{tikzpicture}

\begin{axis}[%
width=  0.5\textwidth,
height=       0.4\textwidth,
at={(         0\textwidth,         0\textwidth)},
scale only axis,
xmode=log,
xmin=1,
xmax=1000000,
xminorticks=true,
xlabel style={font=\color{white!15!black}},
xlabel={$\#\mathcal{I}$},
ymode=log,
ymin=1e-07,
ymax=1,
yminorticks=true,
ylabel style={font=\color{white!15!black}},
ylabel={s / $\#\mathcal{I}$},
axis background/.style={fill=white},
title={truncation to rank 16, traditional multiplication},
xmajorgrids,
ymajorgrids,
legend style={at={(      0.97,      0.03)}, anchor=south east, legend cell align=left, align=left, draw=white!15!black}
]
\addplot [color=mycolor1]
  table[row sep=crcr]{%
6	2.83333333333333e-06\\
24	1.16666666666667e-06\\
96.0000000000001	7.9375e-06\\
384.000000000001	0.000126559895833333\\
1536	0.00134795572916667\\
6144	0.00501630859375\\
24576	0.0130326334635417\\
98304	0.023697509765625\\
393216	0.0395561218261719\\
};
\addlegendentry{ACA}

\addplot [color=mycolor2]
  table[row sep=crcr]{%
6	3.5e-06\\
24	8.75e-07\\
96.0000000000001	7.56250000000001e-06\\
384.000000000001	0.000151052083333333\\
1536	0.0015441015625\\
6144	0.00554381510416667\\
24576	0.0133843180338542\\
98304	0.0240191650390625\\
393216	0.0391474405924479\\
};
\addlegendentry{BiLanczos}

\addplot [color=mycolor3]
  table[row sep=crcr]{%
6	2.83333333333333e-06\\
24	9.58333333333333e-07\\
96.0000000000001	7.66666666666667e-06\\
384.000000000001	0.000139776041666667\\
1536	0.00149927734375\\
6144	0.00567426757812499\\
24576	0.0129478759765625\\
98304	0.0233986409505208\\
393216	0.0388824462890625\\
};
\addlegendentry{Randomized}

\addplot [color=mycolor4]
  table[row sep=crcr]{%
6	4.16666666666666e-06\\
24	1.33333333333333e-06\\
96.0000000000001	6.13541666666666e-06\\
384.000000000001	0.000696039062499999\\
1536	0.0252180989583333\\
6144	0.820641276041667\\
};
\addlegendentry{SVD}

\addplot [color=mycolor5]
  table[row sep=crcr]{%
6	4.83333333333334e-06\\
24	1.5e-06\\
96.0000000000001	8.09375e-06\\
384.000000000001	0.00118475\\
1536	0.00796575520833333\\
6144	0.0182859700520833\\
24576	0.030990966796875\\
98304	0.0471506754557292\\
393216	0.0694157918294271\\
};
\addlegendentry{Hier.~Approx.}

\addplot [color=black, dashed, forget plot]
  table[row sep=crcr]{%
6	0.0016052009977842\\
6.71123680053168	0.00181219534288484\\
7.50678323213512	0.00203173901678489\\
8.39663331351989	0.00226383201948436\\
10.5052843607197	0.00276566601128153\\
13.1434820813104	0.00331769731827637\\
16.4442118166415	0.00391992594046886\\
20.5738555884738	0.00457235187785901\\
25.7405790253207	0.00527497513044681\\
32.204824502121	0.00602779569823228\\
40.2924394277295	0.00683081358121539\\
50.4111014463166	0.00768402877939617\\
63.0708685084453	0.0085874412927746\\
88.2638259384407	0.0100366800288379\\
123.519830208906	0.0115988627240958\\
172.858453535403	0.0132739893785486\\
241.904841579816	0.015062059992196\\
338.531041918473	0.0169630745650382\\
473.753504038874	0.0189770330970751\\
741.579447237127	0.0218380017429827\\
1160.81479477437	0.024899759649681\\
1817.0554655841	0.0281623068171699\\
2844.28711614656	0.0316256432454495\\
4980.0074931085	0.0362371736787858\\
8719.39913893663	0.0411624373321074\\
15266.627901524	0.0464014342054144\\
29898.6097837907	0.0531023583037634\\
58554.3102753	0.0602550582388914\\
114674.47070649	0.0678595340107983\\
251203.641819084	0.0773024168717296\\
393216	0.0829744351687061\\
};
\addplot [color=black, dashed, forget plot]
  table[row sep=crcr]{%
6	0.000223265685105974\\
6.71123680053168	0.000267815286716958\\
7.50678323213512	0.000317928695955608\\
8.39663331351989	0.000373933302401214\\
9.39196574904416	0.000436156495633068\\
10.5052843607197	0.000504925665230462\\
11.750575166952	0.000580568200772688\\
14.7015034385365	0.0007537829280088\\
18.3934999165971	0.000958419793975735\\
23.0126694590316	0.00119709791530783\\
28.7918535369543	0.00147243640863942\\
36.0223672255499	0.00178705439060482\\
45.0686837117619	0.00214357097783839\\
56.3868065303093	0.00254460528697445\\
70.5472556292287	0.00299277643464732\\
88.2638259384407	0.00349070353749135\\
110.42956809313	0.00404100571214088\\
138.161805015572	0.00464630207523021\\
193.349002441632	0.0056630900408265\\
270.580112506207	0.00681834746148049\\
378.660331107598	0.00812091385583309\\
529.911991781088	0.00957962874252515\\
741.579447237127	0.0112033316401975\\
1037.79511521549	0.0130008620674912\\
1452.33083950443	0.0149810595430469\\
2273.37358348436	0.0179207497658958\\
3558.57447181116	0.0212218490407579\\
5570.33492577888	0.024905310300708\\
8719.39913893663	0.0289920864788207\\
13648.7163441897	0.0335031305081706\\
23897.2743924192	0.0397704826946413\\
41841.2771564182	0.0467744160540108\\
73259.0857573098	0.0545558542836906\\
128267.921314482	0.0631557210810921\\
251203.641819084	0.0746135075133219\\
393216	0.0829744351687061\\
};
\addplot [color=black, dashed, forget plot]
  table[row sep=crcr]{%
6	3.10537846753452e-05\\
6.71123680053168	3.95790818472734e-05\\
7.50678323213512	4.97498226282944e-05\\
8.39663331351989	6.17652340991831e-05\\
9.39196574904416	7.58359313533958e-05\\
10.5052843607197	9.21839174970691e-05\\
11.750575166952	0.000111042583649021\\
13.1434820813104	0.00013265670894075\\
16.4442118166415	0.000185187393532941\\
20.5738555884738	0.000251961964579249\\
25.7405790253207	0.000335348623588181\\
32.204824502121	0.000437897780271133\\
40.2924394277295	0.000562342052542401\\
50.4111014463166	0.000711596266519174\\
63.0708685084453	0.000888757456521537\\
78.909889692566	0.00109710486507246\\
98.7265727989643	0.00134009994289783\\
123.519830208906	0.00162138634892641\\
154.539431708065	0.00194478995028987\\
193.349002441632	0.00231431882232276\\
241.904841579816	0.00273416324856254\\
302.654534757277	0.00320869572074955\\
423.54652483847	0.00403304774386247\\
592.727476714046	0.00500687945344094\\
829.485879469291	0.00614725408167652\\
1160.81479477437	0.00747215728978795\\
1624.48936277154	0.00900049716802108\\
2273.37358348436	0.010752104235649\\
3181.44739419349	0.0127477314409718\\
4452.24119588268	0.015009054161317\\
6969.21613636892	0.0184766460533132\\
10909.1065417438	0.0225129817791119\\
17076.3258321219	0.0271770927554179\\
26730.0445558012	0.0325309257301824\\
41841.2771564182	0.0386393427826031\\
65495.3069915573	0.0455701213231238\\
114674.47070649	0.0554980138978931\\
200781.320614451	0.0669659947501943\\
351544.144562608	0.0801256589061995\\
393216	0.0829744351687061\\
};
\end{axis}
\end{tikzpicture}
\scalefig{
%
%
\definecolor{mycolor1}{rgb}{0.00000,0.44700,0.74100}%
\definecolor{mycolor2}{rgb}{0.85000,0.32500,0.09800}%
\definecolor{mycolor3}{rgb}{0.92900,0.69400,0.12500}%
\definecolor{mycolor4}{rgb}{0.49400,0.18400,0.55600}%
\begin{tikzpicture}

\begin{axis}[%
width=  0.5\textwidth,
height=       0.4\textwidth,
at={(         0\textwidth,         0\textwidth)},
scale only axis,
xmode=log,
xmin=1,
xmax=1000000,
xminorticks=true,
xlabel style={font=\color{white!15!black}},
xlabel={$\#\mathcal{I}$},
ymode=log,
ymin=1e-07,
ymax=1,
yminorticks=true,
ylabel style={font=\color{white!15!black}},
ylabel={s / $\#\mathcal{I}$},
axis background/.style={fill=white},
title={truncation to rank 16, new multiplication},
xmajorgrids,
ymajorgrids,
legend style={at={(      0.97,      0.03)}, anchor=south east, legend cell align=left, align=left, draw=white!15!black}
]
\addplot [color=mycolor1]
  table[row sep=crcr]{%
6	4e-06\\
24	1.79166666666667e-06\\
96.0000000000001	4.50208333333333e-05\\
384.000000000001	0.000313739583333333\\
1536	0.000809628906249999\\
6144	0.00206031901041667\\
24576	0.0043970947265625\\
98304	0.0085166524251302\\
393216	0.014923578898112\\
};
\addlegendentry{ACA}

\addplot [color=mycolor2]
  table[row sep=crcr]{%
6	5.16666666666667e-06\\
24	2.20833333333333e-06\\
96.0000000000001	2.934375e-05\\
384.000000000001	0.000231130208333333\\
1536	0.000806966145833333\\
6144	0.00179557291666667\\
24576	0.00357755533854166\\
98304	0.00634388224283854\\
393216	0.00993301391601562\\
};
\addlegendentry{BiLanczos}

\addplot [color=mycolor3]
  table[row sep=crcr]{%
6	4.66666666666667e-06\\
24	2.70833333333333e-06\\
96.0000000000001	2.35833333333333e-05\\
384.000000000001	0.000207627604166667\\
1536	0.000832949218749999\\
6144	0.00204913736979167\\
24576	0.00419571940104167\\
98304	0.00741547648111978\\
393216	0.0123853810628255\\
};
\addlegendentry{Randomized}

\addplot [color=mycolor4]
  table[row sep=crcr]{%
6	6.5e-06\\
24	1.83333333333333e-06\\
96.0000000000001	1.35625e-05\\
384.000000000001	0.000242619791666666\\
1536	0.0075349609375\\
6144	0.27962890625\\
};
\addlegendentry{SVD}

\addplot [color=black, dashed, forget plot]
  table[row sep=crcr]{%
6	0.0016052009977842\\
6.71123680053168	0.00181219534288484\\
7.50678323213512	0.00203173901678489\\
8.39663331351989	0.00226383201948436\\
10.5052843607197	0.00276566601128153\\
13.1434820813104	0.00331769731827637\\
16.4442118166415	0.00391992594046886\\
20.5738555884738	0.00457235187785901\\
25.7405790253207	0.00527497513044681\\
32.204824502121	0.00602779569823228\\
40.2924394277295	0.00683081358121539\\
50.4111014463166	0.00768402877939617\\
63.0708685084453	0.0085874412927746\\
88.2638259384407	0.0100366800288379\\
123.519830208906	0.0115988627240958\\
172.858453535403	0.0132739893785486\\
241.904841579816	0.015062059992196\\
338.531041918473	0.0169630745650382\\
473.753504038874	0.0189770330970751\\
741.579447237127	0.0218380017429827\\
1160.81479477437	0.024899759649681\\
1817.0554655841	0.0281623068171699\\
2844.28711614656	0.0316256432454495\\
4980.0074931085	0.0362371736787858\\
8719.39913893663	0.0411624373321074\\
15266.627901524	0.0464014342054144\\
29898.6097837907	0.0531023583037634\\
58554.3102753	0.0602550582388914\\
114674.47070649	0.0678595340107983\\
251203.641819084	0.0773024168717296\\
393216	0.0829744351687061\\
};
\addplot [color=black, dashed, forget plot]
  table[row sep=crcr]{%
6	0.000223265685105974\\
6.71123680053168	0.000267815286716958\\
7.50678323213512	0.000317928695955608\\
8.39663331351989	0.000373933302401214\\
9.39196574904416	0.000436156495633068\\
10.5052843607197	0.000504925665230462\\
11.750575166952	0.000580568200772688\\
14.7015034385365	0.0007537829280088\\
18.3934999165971	0.000958419793975735\\
23.0126694590316	0.00119709791530783\\
28.7918535369543	0.00147243640863942\\
36.0223672255499	0.00178705439060482\\
45.0686837117619	0.00214357097783839\\
56.3868065303093	0.00254460528697445\\
70.5472556292287	0.00299277643464732\\
88.2638259384407	0.00349070353749135\\
110.42956809313	0.00404100571214088\\
138.161805015572	0.00464630207523021\\
193.349002441632	0.0056630900408265\\
270.580112506207	0.00681834746148049\\
378.660331107598	0.00812091385583309\\
529.911991781088	0.00957962874252515\\
741.579447237127	0.0112033316401975\\
1037.79511521549	0.0130008620674912\\
1452.33083950443	0.0149810595430469\\
2273.37358348436	0.0179207497658958\\
3558.57447181116	0.0212218490407579\\
5570.33492577888	0.024905310300708\\
8719.39913893663	0.0289920864788207\\
13648.7163441897	0.0335031305081706\\
23897.2743924192	0.0397704826946413\\
41841.2771564182	0.0467744160540108\\
73259.0857573098	0.0545558542836906\\
128267.921314482	0.0631557210810921\\
251203.641819084	0.0746135075133219\\
393216	0.0829744351687061\\
};
\addplot [color=black, dashed, forget plot]
  table[row sep=crcr]{%
6	3.10537846753452e-05\\
6.71123680053168	3.95790818472734e-05\\
7.50678323213512	4.97498226282944e-05\\
8.39663331351989	6.17652340991831e-05\\
9.39196574904416	7.58359313533958e-05\\
10.5052843607197	9.21839174970691e-05\\
11.750575166952	0.000111042583649021\\
13.1434820813104	0.00013265670894075\\
16.4442118166415	0.000185187393532941\\
20.5738555884738	0.000251961964579249\\
25.7405790253207	0.000335348623588181\\
32.204824502121	0.000437897780271133\\
40.2924394277295	0.000562342052542401\\
50.4111014463166	0.000711596266519174\\
63.0708685084453	0.000888757456521537\\
78.909889692566	0.00109710486507246\\
98.7265727989643	0.00134009994289783\\
123.519830208906	0.00162138634892641\\
154.539431708065	0.00194478995028987\\
193.349002441632	0.00231431882232276\\
241.904841579816	0.00273416324856254\\
302.654534757277	0.00320869572074955\\
423.54652483847	0.00403304774386247\\
592.727476714046	0.00500687945344094\\
829.485879469291	0.00614725408167652\\
1160.81479477437	0.00747215728978795\\
1624.48936277154	0.00900049716802108\\
2273.37358348436	0.010752104235649\\
3181.44739419349	0.0127477314409718\\
4452.24119588268	0.015009054161317\\
6969.21613636892	0.0184766460533132\\
10909.1065417438	0.0225129817791119\\
17076.3258321219	0.0271770927554179\\
26730.0445558012	0.0325309257301824\\
41841.2771564182	0.0386393427826031\\
65495.3069915573	0.0455701213231238\\
114674.47070649	0.0554980138978931\\
200781.320614451	0.0669659947501943\\
351544.144562608	0.0801256589061995\\
393216	0.0829744351687061\\
};
\end{axis}
\end{tikzpicture}
\caption{\label{fig:esetfr}Computation times in seconds per degree of freedom
for the product of the matrices occurring from the exponential kernels
using fixed rank truncation with corresponding asymptotics $N\log^2N$,
$N\log^3N$, and $N\log^4N$.}
\end{figure}
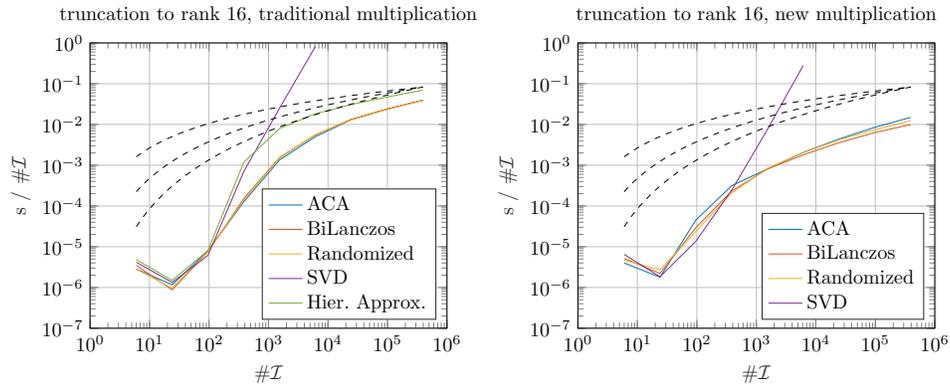
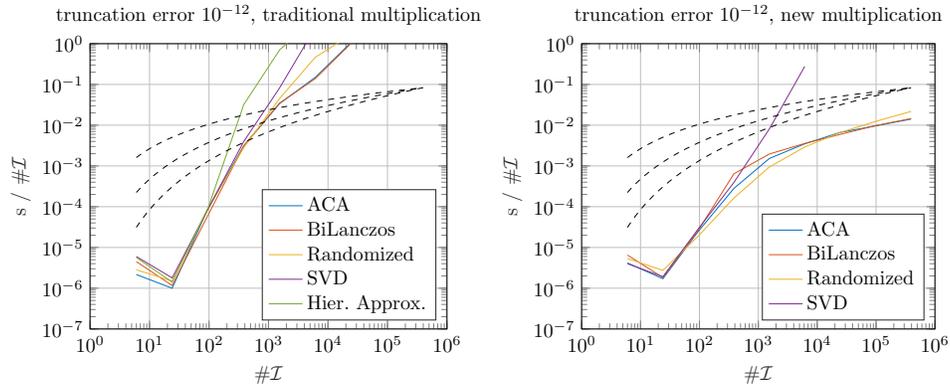
\begin{figure}
\scalefig{
%
%
\definecolor{mycolor1}{rgb}{0.00000,0.44700,0.74100}%
\definecolor{mycolor2}{rgb}{0.85000,0.32500,0.09800}%
\definecolor{mycolor3}{rgb}{0.92900,0.69400,0.12500}%
\definecolor{mycolor4}{rgb}{0.49400,0.18400,0.55600}%
\definecolor{mycolor5}{rgb}{0.46600,0.67400,0.18800}%
\begin{tikzpicture}

\begin{axis}[%
width=  0.5\textwidth,
height=       0.4\textwidth,
at={(         0\textwidth,         0\textwidth)},
scale only axis,
xmode=log,
xmin=1,
xmax=1000000,
xminorticks=true,
xlabel style={font=\color{white!15!black}},
xlabel={$\#\mathcal{I}$},
ymode=log,
ymin=1e-07,
ymax=1,
yminorticks=true,
ylabel style={font=\color{white!15!black}},
ylabel={s / $\#\mathcal{I}$},
axis background/.style={fill=white},
title={truncation error $10^{-12}$, traditional multiplication},
xmajorgrids,
ymajorgrids,
legend style={at={(      0.97,      0.03)}, anchor=south east, legend cell align=left, align=left, draw=white!15!black}
]
\addplot [color=mycolor1]
  table[row sep=crcr]{%
6	2.16666666666667e-06\\
24	1e-06\\
96.0000000000001	8.84166666666667e-05\\
384.000000000001	0.00311213541666667\\
1536	0.03380546875\\
6144	0.150038411458333\\
24576	1.07766520182292\\
};
\addlegendentry{ACA}

\addplot [color=mycolor2]
  table[row sep=crcr]{%
6	4.5e-06\\
24	1.16666666666667e-06\\
96.0000000000001	6.134375e-05\\
384.000000000001	0.00312947916666667\\
1536	0.0348748697916667\\
6144	0.139534830729167\\
24576	1.02823486328125\\
};
\addlegendentry{BiLanczos}

\addplot [color=mycolor3]
  table[row sep=crcr]{%
6	2.83333333333333e-06\\
24	1.5e-06\\
96.0000000000001	8.759375e-05\\
384.000000000001	0.00281614583333333\\
1536	0.0450075520833333\\
6144	0.467408854166666\\
24576	1.65286458333333\\
};
\addlegendentry{Randomized}

\addplot [color=mycolor4]
  table[row sep=crcr]{%
6	6e-06\\
24	1.79166666666667e-06\\
96.0000000000001	8.77395833333333e-05\\
384.000000000001	0.00399005208333333\\
1536	0.0822766927083334\\
6144	2.47330729166667\\
};
\addlegendentry{SVD}

\addplot [color=mycolor5]
  table[row sep=crcr]{%
6	5.66666666666667e-06\\
24	1.41666666666667e-06\\
96.0000000000001	9.16458333333334e-05\\
384.000000000001	0.032071875\\
1536	0.700514322916667\\
6144	4.41041666666667\\
};
\addlegendentry{Hier.~Approx.}

\addplot [color=black, dashed, forget plot]
  table[row sep=crcr]{%
6	0.0016052009977842\\
6.71123680053168	0.00181219534288484\\
7.50678323213512	0.00203173901678489\\
8.39663331351989	0.00226383201948436\\
10.5052843607197	0.00276566601128153\\
13.1434820813104	0.00331769731827637\\
16.4442118166415	0.00391992594046886\\
20.5738555884738	0.00457235187785901\\
25.7405790253207	0.00527497513044681\\
32.204824502121	0.00602779569823228\\
40.2924394277295	0.00683081358121539\\
50.4111014463166	0.00768402877939617\\
63.0708685084453	0.0085874412927746\\
88.2638259384407	0.0100366800288379\\
123.519830208906	0.0115988627240958\\
172.858453535403	0.0132739893785486\\
241.904841579816	0.015062059992196\\
338.531041918473	0.0169630745650382\\
473.753504038874	0.0189770330970751\\
741.579447237127	0.0218380017429827\\
1160.81479477437	0.024899759649681\\
1817.0554655841	0.0281623068171699\\
2844.28711614656	0.0316256432454495\\
4980.0074931085	0.0362371736787858\\
8719.39913893663	0.0411624373321074\\
15266.627901524	0.0464014342054144\\
29898.6097837907	0.0531023583037634\\
58554.3102753	0.0602550582388914\\
114674.47070649	0.0678595340107983\\
251203.641819084	0.0773024168717296\\
393216	0.0829744351687061\\
};
\addplot [color=black, dashed, forget plot]
  table[row sep=crcr]{%
6	0.000223265685105974\\
6.71123680053168	0.000267815286716958\\
7.50678323213512	0.000317928695955608\\
8.39663331351989	0.000373933302401214\\
9.39196574904416	0.000436156495633068\\
10.5052843607197	0.000504925665230462\\
11.750575166952	0.000580568200772688\\
14.7015034385365	0.0007537829280088\\
18.3934999165971	0.000958419793975735\\
23.0126694590316	0.00119709791530783\\
28.7918535369543	0.00147243640863942\\
36.0223672255499	0.00178705439060482\\
45.0686837117619	0.00214357097783839\\
56.3868065303093	0.00254460528697445\\
70.5472556292287	0.00299277643464732\\
88.2638259384407	0.00349070353749135\\
110.42956809313	0.00404100571214088\\
138.161805015572	0.00464630207523021\\
193.349002441632	0.0056630900408265\\
270.580112506207	0.00681834746148049\\
378.660331107598	0.00812091385583309\\
529.911991781088	0.00957962874252515\\
741.579447237127	0.0112033316401975\\
1037.79511521549	0.0130008620674912\\
1452.33083950443	0.0149810595430469\\
2273.37358348436	0.0179207497658958\\
3558.57447181116	0.0212218490407579\\
5570.33492577888	0.024905310300708\\
8719.39913893663	0.0289920864788207\\
13648.7163441897	0.0335031305081706\\
23897.2743924192	0.0397704826946413\\
41841.2771564182	0.0467744160540108\\
73259.0857573098	0.0545558542836906\\
128267.921314482	0.0631557210810921\\
251203.641819084	0.0746135075133219\\
393216	0.0829744351687061\\
};
\addplot [color=black, dashed, forget plot]
  table[row sep=crcr]{%
6	3.10537846753452e-05\\
6.71123680053168	3.95790818472734e-05\\
7.50678323213512	4.97498226282944e-05\\
8.39663331351989	6.17652340991831e-05\\
9.39196574904416	7.58359313533958e-05\\
10.5052843607197	9.21839174970691e-05\\
11.750575166952	0.000111042583649021\\
13.1434820813104	0.00013265670894075\\
16.4442118166415	0.000185187393532941\\
20.5738555884738	0.000251961964579249\\
25.7405790253207	0.000335348623588181\\
32.204824502121	0.000437897780271133\\
40.2924394277295	0.000562342052542401\\
50.4111014463166	0.000711596266519174\\
63.0708685084453	0.000888757456521537\\
78.909889692566	0.00109710486507246\\
98.7265727989643	0.00134009994289783\\
123.519830208906	0.00162138634892641\\
154.539431708065	0.00194478995028987\\
193.349002441632	0.00231431882232276\\
241.904841579816	0.00273416324856254\\
302.654534757277	0.00320869572074955\\
423.54652483847	0.00403304774386247\\
592.727476714046	0.00500687945344094\\
829.485879469291	0.00614725408167652\\
1160.81479477437	0.00747215728978795\\
1624.48936277154	0.00900049716802108\\
2273.37358348436	0.010752104235649\\
3181.44739419349	0.0127477314409718\\
4452.24119588268	0.015009054161317\\
6969.21613636892	0.0184766460533132\\
10909.1065417438	0.0225129817791119\\
17076.3258321219	0.0271770927554179\\
26730.0445558012	0.0325309257301824\\
41841.2771564182	0.0386393427826031\\
65495.3069915573	0.0455701213231238\\
114674.47070649	0.0554980138978931\\
200781.320614451	0.0669659947501943\\
351544.144562608	0.0801256589061995\\
393216	0.0829744351687061\\
};
\end{axis}
\end{tikzpicture}
\scalefig{
%
%
\definecolor{mycolor1}{rgb}{0.00000,0.44700,0.74100}%
\definecolor{mycolor2}{rgb}{0.85000,0.32500,0.09800}%
\definecolor{mycolor3}{rgb}{0.92900,0.69400,0.12500}%
\definecolor{mycolor4}{rgb}{0.49400,0.18400,0.55600}%
\begin{tikzpicture}

\begin{axis}[%
width=  0.495\textwidth,
height=       0.4\textwidth,
at={(         0\textwidth,         0\textwidth)},
scale only axis,
xmode=log,
xmin=1,
xmax=1000000,
xminorticks=true,
xlabel style={font=\color{white!15!black}},
xlabel={$\#\mathcal{I}$},
ymode=log,
ymin=1e-07,
ymax=1,
yminorticks=true,
ylabel style={font=\color{white!15!black}},
ylabel={s / $\#\mathcal{I}$},
axis background/.style={fill=white},
title={truncation error $10^{-12}$, new multiplication},
xmajorgrids,
ymajorgrids,
legend style={at={(      0.97,      0.03)}, anchor=south east, legend cell align=left, align=left, draw=white!15!black}
]
\addplot [color=mycolor1]
  table[row sep=crcr]{%
6	4.16666666666666e-06\\
24	1.70833333333333e-06\\
96.0000000000001	2.49479166666667e-05\\
384.000000000001	0.000279432291666666\\
1536	0.0015226953125\\
6144	0.00349205729166667\\
24576	0.00664921061197917\\
98304	0.00960606892903645\\
393216	0.0141305796305338\\
};
\addlegendentry{ACA}

\addplot [color=mycolor2]
  table[row sep=crcr]{%
6	6.5e-06\\
24	1.83333333333333e-06\\
96.0000000000001	2.659375e-05\\
384.000000000001	0.000638770833333333\\
1536	0.00195805338541667\\
6144	0.00356305338541666\\
24576	0.00594238281249999\\
98304	0.0097773234049479\\
393216	0.0145653279622396\\
};
\addlegendentry{BiLanczos}

\addplot [color=mycolor3]
  table[row sep=crcr]{%
6	5.33333333333333e-06\\
24	2.75e-06\\
96.0000000000001	1.91458333333333e-05\\
384.000000000001	0.0001625546875\\
1536	0.000941100260416666\\
6144	0.00291796875\\
24576	0.00660803222656249\\
98304	0.0123855590820312\\
393216	0.0217745463053385\\
};
\addlegendentry{Randomized}

\addplot [color=mycolor4]
  table[row sep=crcr]{%
6	4e-06\\
24	1.91666666666667e-06\\
96.0000000000001	2.88229166666667e-05\\
384.000000000001	0.000390440104166667\\
1536	0.00802773437499999\\
6144	0.2765087890625\\
};
\addlegendentry{SVD}

\addplot [color=black, dashed, forget plot]
  table[row sep=crcr]{%
6	0.0016052009977842\\
6.71123680053168	0.00181219534288484\\
7.50678323213512	0.00203173901678489\\
8.39663331351989	0.00226383201948436\\
10.5052843607197	0.00276566601128153\\
13.1434820813104	0.00331769731827637\\
16.4442118166415	0.00391992594046886\\
20.5738555884738	0.00457235187785901\\
25.7405790253207	0.00527497513044681\\
32.204824502121	0.00602779569823228\\
40.2924394277295	0.00683081358121539\\
50.4111014463166	0.00768402877939617\\
63.0708685084453	0.0085874412927746\\
88.2638259384407	0.0100366800288379\\
123.519830208906	0.0115988627240958\\
172.858453535403	0.0132739893785486\\
241.904841579816	0.015062059992196\\
338.531041918473	0.0169630745650382\\
473.753504038874	0.0189770330970751\\
741.579447237127	0.0218380017429827\\
1160.81479477437	0.024899759649681\\
1817.0554655841	0.0281623068171699\\
2844.28711614656	0.0316256432454495\\
4980.0074931085	0.0362371736787858\\
8719.39913893663	0.0411624373321074\\
15266.627901524	0.0464014342054144\\
29898.6097837907	0.0531023583037634\\
58554.3102753	0.0602550582388914\\
114674.47070649	0.0678595340107983\\
251203.641819084	0.0773024168717296\\
393216	0.0829744351687061\\
};
\addplot [color=black, dashed, forget plot]
  table[row sep=crcr]{%
6	0.000223265685105974\\
6.71123680053168	0.000267815286716958\\
7.50678323213512	0.000317928695955608\\
8.39663331351989	0.000373933302401214\\
9.39196574904416	0.000436156495633068\\
10.5052843607197	0.000504925665230462\\
11.750575166952	0.000580568200772688\\
14.7015034385365	0.0007537829280088\\
18.3934999165971	0.000958419793975735\\
23.0126694590316	0.00119709791530783\\
28.7918535369543	0.00147243640863942\\
36.0223672255499	0.00178705439060482\\
45.0686837117619	0.00214357097783839\\
56.3868065303093	0.00254460528697445\\
70.5472556292287	0.00299277643464732\\
88.2638259384407	0.00349070353749135\\
110.42956809313	0.00404100571214088\\
138.161805015572	0.00464630207523021\\
193.349002441632	0.0056630900408265\\
270.580112506207	0.00681834746148049\\
378.660331107598	0.00812091385583309\\
529.911991781088	0.00957962874252515\\
741.579447237127	0.0112033316401975\\
1037.79511521549	0.0130008620674912\\
1452.33083950443	0.0149810595430469\\
2273.37358348436	0.0179207497658958\\
3558.57447181116	0.0212218490407579\\
5570.33492577888	0.024905310300708\\
8719.39913893663	0.0289920864788207\\
13648.7163441897	0.0335031305081706\\
23897.2743924192	0.0397704826946413\\
41841.2771564182	0.0467744160540108\\
73259.0857573098	0.0545558542836906\\
128267.921314482	0.0631557210810921\\
251203.641819084	0.0746135075133219\\
393216	0.0829744351687061\\
};
\addplot [color=black, dashed, forget plot]
  table[row sep=crcr]{%
6	3.10537846753452e-05\\
6.71123680053168	3.95790818472734e-05\\
7.50678323213512	4.97498226282944e-05\\
8.39663331351989	6.17652340991831e-05\\
9.39196574904416	7.58359313533958e-05\\
10.5052843607197	9.21839174970691e-05\\
11.750575166952	0.000111042583649021\\
13.1434820813104	0.00013265670894075\\
16.4442118166415	0.000185187393532941\\
20.5738555884738	0.000251961964579249\\
25.7405790253207	0.000335348623588181\\
32.204824502121	0.000437897780271133\\
40.2924394277295	0.000562342052542401\\
50.4111014463166	0.000711596266519174\\
63.0708685084453	0.000888757456521537\\
78.909889692566	0.00109710486507246\\
98.7265727989643	0.00134009994289783\\
123.519830208906	0.00162138634892641\\
154.539431708065	0.00194478995028987\\
193.349002441632	0.00231431882232276\\
241.904841579816	0.00273416324856254\\
302.654534757277	0.00320869572074955\\
423.54652483847	0.00403304774386247\\
592.727476714046	0.00500687945344094\\
829.485879469291	0.00614725408167652\\
1160.81479477437	0.00747215728978795\\
1624.48936277154	0.00900049716802108\\
2273.37358348436	0.010752104235649\\
3181.44739419349	0.0127477314409718\\
4452.24119588268	0.015009054161317\\
6969.21613636892	0.0184766460533132\\
10909.1065417438	0.0225129817791119\\
17076.3258321219	0.0271770927554179\\
26730.0445558012	0.0325309257301824\\
41841.2771564182	0.0386393427826031\\
65495.3069915573	0.0455701213231238\\
114674.47070649	0.0554980138978931\\
200781.320614451	0.0669659947501943\\
351544.144562608	0.0801256589061995\\
393216	0.0829744351687061\\
};
\end{axis}
\end{tikzpicture}
\caption{\label{fig:eseter}Computation times in seconds per degree of freedom
for the product of the matrices occurring from the exponential kernels
using $\varepsilon$-rank truncation with corresponding asymptotics $N\log^2N$,
$N\log^3N$, and $N\log^4N$.}
\end{figure}

To estimate the error of the $\mathcal{H}$-matrix multiplication, we apply
ten subspace iterations to a subspace of size 100, using the matrix-vector
product
\[
(\mathbf{K}_1\mathbf{K}_2)\mathbf{v}-\mathbf{K}_1(\mathbf{K}_2\mathbf{v}),
\]
and compute an approximation to the Frobenius norm.
Looking at the corresponding errors in Figures~\ref{fig:eseefr} and \ref{fig:eseeer},
we see that the accuracies of the fixed rank arithmetic behave similar. However,
the new multiplication reaches these accuracies in a shorter computation time.
For the $\varepsilon$-rank truncation, we observe that the traditional
multiplication cannot achieve the prescribed $\varepsilon$-rank of
$\varepsilon=10^{-12}$. This accuracy is only achieved by the new multiplication
with an appropriate low-rank approximation method. The computation times for these
accuracies are even smaller than the fixed-rank version of the
traditional $\mathcal{H}$-matrix multiplication. Concerning the low-rank
approximation methods, it seems as if ACA and the randomized algorithm
are more robust when applied to smaller matrix sizes, although this difference
vanishes for large matrices.
\begin{figure}
\scalefig{
%
%
\definecolor{mycolor1}{rgb}{0.00000,0.44700,0.74100}%
\definecolor{mycolor2}{rgb}{0.85000,0.32500,0.09800}%
\definecolor{mycolor3}{rgb}{0.92900,0.69400,0.12500}%
\definecolor{mycolor4}{rgb}{0.49400,0.18400,0.55600}%
\definecolor{mycolor5}{rgb}{0.46600,0.67400,0.18800}%
\begin{tikzpicture}

\begin{axis}[%
width=  0.5\textwidth,
height=       0.4\textwidth,
at={(         0\textwidth,         0\textwidth)},
scale only axis,
xmode=log,
xmin=1,
xmax=1000000,
xminorticks=true,
xlabel style={font=\color{white!15!black}},
xlabel={$\#\mathcal{I}$},
ymode=log,
ymin=1e-17,
ymax=1e-05,
ytick={1e-5,1e-7,1e-9,1e-11,1e-13,1e-15,1e-17},
yminorticks=true,
ylabel style={font=\color{white!15!black}},
ylabel={error},
axis background/.style={fill=white},
title={truncation to rank 16, traditional multiplication},
xmajorgrids,
ymajorgrids,
legend style={at={(      0.97,      0.03)}, anchor=south east, legend cell align=left, align=left, draw=white!15!black}
]
\addplot [color=mycolor1]
  table[row sep=crcr]{%
6	1.92824e-16\\
24	3.00744999999999e-16\\
96.0000000000001	2.76113e-16\\
384.000000000001	3.82215e-07\\
1536	1.80116e-07\\
6144.00000000001	2.16112e-07\\
24576.0000000001	2.23088999999999e-07\\
98304.0000000002	2.25617e-07\\
393216.000000001	2.25953999999999e-07\\
};
\addlegendentry{ACA}

\addplot [color=mycolor2]
  table[row sep=crcr]{%
6	2.06081e-16\\
24	3.0977e-16\\
96.0000000000001	2.8044e-16\\
384.000000000001	3.81906999999999e-07\\
1536	1.8126e-07\\
6144.00000000001	2.16327e-07\\
24576.0000000001	2.22671999999999e-07\\
98304.0000000002	2.2522e-07\\
393216.000000001	2.25551999999999e-07\\
};
\addlegendentry{BiLanczos}

\addplot [color=mycolor3]
  table[row sep=crcr]{%
6	1.93823e-16\\
24	3.09115000000001e-16\\
96.0000000000001	2.79345e-16\\
384.000000000001	3.81876e-07\\
1536	1.81339e-07\\
6144.00000000001	2.16347e-07\\
24576.0000000001	2.22669e-07\\
98304.0000000002	2.25218e-07\\
393216.000000001	2.25551e-07\\
};
\addlegendentry{Randomized}

\addplot [color=mycolor4]
  table[row sep=crcr]{%
6	2.09086e-16\\
24	2.99674e-16\\
96.0000000000001	2.85332e-16\\
384.000000000001	3.81864e-07\\
1536	1.81483e-07\\
6144.00000000001	2.16362e-07\\
};
\addlegendentry{SVD}

\addplot [color=mycolor5]
  table[row sep=crcr]{%
6	1.88443e-16\\
24	3.06568e-16\\
96.0000000000001	2.83968999999999e-16\\
384.000000000001	3.91301e-07\\
1536	1.81123e-07\\
6144.00000000001	2.18155e-07\\
24576.0000000001	2.25869e-07\\
98304.0000000002	2.28405e-07\\
393216.000000001	2.28695999999999e-07\\
};
\addlegendentry{Hier.~Approx.}

\end{axis}
\end{tikzpicture}
\scalefig{
%
%
\definecolor{mycolor1}{rgb}{0.00000,0.44700,0.74100}%
\definecolor{mycolor2}{rgb}{0.85000,0.32500,0.09800}%
\definecolor{mycolor3}{rgb}{0.92900,0.69400,0.12500}%
\definecolor{mycolor4}{rgb}{0.49400,0.18400,0.55600}%
\begin{tikzpicture}

\begin{axis}[%
width=  0.5\textwidth,
height=       0.4\textwidth,
at={(         0\textwidth,         0\textwidth)},
scale only axis,
xmode=log,
xmin=1,
xmax=1000000,
xminorticks=true,
xlabel style={font=\color{white!15!black}},
xlabel={$\#\mathcal{I}$},
ymode=log,
ymin=1e-17,
ymax=1e-05,
ytick={1e-5,1e-7,1e-9,1e-11,1e-13,1e-15,1e-17},
yminorticks=true,
ylabel style={font=\color{white!15!black}},
ylabel={error},
axis background/.style={fill=white},
title={truncation to rank 16, new multiplication},
xmajorgrids,
ymajorgrids,
legend style={at={(      0.97,      0.03)}, anchor=south east, legend cell align=left, align=left, draw=white!15!black}
]
\addplot [color=mycolor1]
  table[row sep=crcr]{%
6	1.78671e-16\\
24	3.10095e-16\\
96.0000000000001	2.81555e-16\\
384.000000000001	2.40768e-06\\
1536	1.22553e-06\\
6144.00000000001	9.32552e-07\\
24576.0000000001	1.50491e-06\\
98304.0000000002	1.83797e-06\\
393216.000000001	1.78566e-06\\
};
\addlegendentry{ACA}

\addplot [color=mycolor2]
  table[row sep=crcr]{%
6	2.40383e-16\\
24	3.07905e-16\\
96.0000000000001	1.74604e-15\\
384.000000000001	3.524e-07\\
1536	1.13568e-07\\
6144.00000000001	1.47897e-07\\
24576.0000000001	1.62633e-07\\
98304.0000000002	1.88504e-07\\
393216.000000001	1.70769e-07\\
};
\addlegendentry{BiLanczos}

\addplot [color=mycolor3]
  table[row sep=crcr]{%
6	1.96871e-16\\
24	3.02722e-16\\
96.0000000000001	2.92371e-16\\
384.000000000001	3.72874999999999e-07\\
1536	1.01478e-07\\
6144.00000000001	1.49288e-07\\
24576.0000000001	1.75243e-07\\
393216.000000001	1.63382e-07\\
};
\addlegendentry{Randomized}

\addplot [color=mycolor4]
  table[row sep=crcr]{%
6	1.97368e-16\\
24	3.06593999999999e-16\\
96.0000000000001	2.84733e-16\\
384.000000000001	3.52325999999999e-07\\
1536	1.27153e-07\\
6144.00000000001	1.98134e-07\\
};
\addlegendentry{SVD}

\end{axis}
\end{tikzpicture}
\caption{\label{fig:eseefr}Error using fixed rank truncation
for the product of the matrices occurring from the exponential kernels.}
\end{figure}
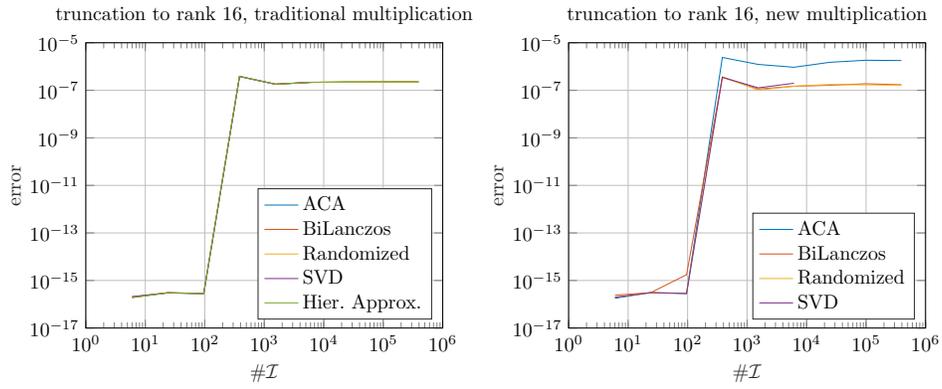
\begin{figure}
\scalefig{
%
%
\definecolor{mycolor1}{rgb}{0.00000,0.44700,0.74100}%
\definecolor{mycolor2}{rgb}{0.85000,0.32500,0.09800}%
\definecolor{mycolor3}{rgb}{0.92900,0.69400,0.12500}%
\definecolor{mycolor4}{rgb}{0.49400,0.18400,0.55600}%
\definecolor{mycolor5}{rgb}{0.46600,0.67400,0.18800}%
\begin{tikzpicture}

\begin{axis}[%
width=  0.5\textwidth,
height=       0.4\textwidth,
at={(         0\textwidth,         0\textwidth)},
scale only axis,
xmode=log,
xmin=1,
xmax=1000000,
xminorticks=true,
xlabel style={font=\color{white!15!black}},
xlabel={$\#\mathcal{I}$},
ymode=log,
ymin=1e-17,
ymax=1e-05,
ytick={1e-5,1e-7,1e-9,1e-11,1e-13,1e-15,1e-17},
yminorticks=true,
ylabel style={font=\color{white!15!black}},
ylabel={error},
axis background/.style={fill=white},
title={truncation error $10^{-12}$, traditional multiplication},
xmajorgrids,
ymajorgrids,
legend style={at={(      0.97,      0.03)}, anchor=south east, legend cell align=left, align=left, draw=white!15!black}
]
\addplot [color=mycolor1]
  table[row sep=crcr]{%
6	1.92742e-16\\
24	3.06532e-16\\
96.0000000000001	5.12297e-15\\
384.000000000001	5.73902e-11\\
1536	1.04966e-10\\
6144.00000000001	1.16333e-10\\
24576.0000000001	6.49643000000001e-11\\
};
\addlegendentry{ACA}

\addplot [color=mycolor2]
  table[row sep=crcr]{%
6	2.19054e-16\\
24	3.24651e-16\\
96.0000000000001	5.12707000000001e-15\\
384.000000000001	5.58396999999999e-11\\
1536	8.54886999999999e-11\\
6144.00000000001	9.07649999999999e-11\\
24576.0000000001	1.49186e-10\\
};
\addlegendentry{BiLanczos}

\addplot [color=mycolor3]
  table[row sep=crcr]{%
6	1.95154e-16\\
24	3.15543e-16\\
96.0000000000001	5.12651999999999e-15\\
384.000000000001	5.75384000000001e-11\\
1536	8.42790999999999e-11\\
6144.00000000001	1.60507e-10\\
24576.0000000001	1.37834e-10\\
};
\addlegendentry{Randomized}

\addplot [color=mycolor4]
  table[row sep=crcr]{%
6	1.4862e-16\\
24	3.23214000000001e-16\\
96.0000000000001	5.12524000000001e-15\\
384.000000000001	5.88538e-11\\
1536	2.28905e-10\\
6144.00000000001	7.68007e-11\\
};
\addlegendentry{SVD}

\addplot [color=mycolor5]
  table[row sep=crcr]{%
6	1.72786e-16\\
24	3.07925e-16\\
96.0000000000001	5.12765e-15\\
384.000000000001	5.17945e-11\\
1536	2.85593e-10\\
6144.00000000001	7.61287000000001e-11\\
};
\addlegendentry{Hier.~Approx.}

\end{axis}
\end{tikzpicture}
\scalefig{
%
%
\definecolor{mycolor1}{rgb}{0.00000,0.44700,0.74100}%
\definecolor{mycolor2}{rgb}{0.85000,0.32500,0.09800}%
\definecolor{mycolor3}{rgb}{0.92900,0.69400,0.12500}%
\definecolor{mycolor4}{rgb}{0.49400,0.18400,0.55600}%
\begin{tikzpicture}

\begin{axis}[%
width=  0.5\textwidth,
height=       0.4\textwidth,
at={(         0\textwidth,         0\textwidth)},
scale only axis,
xmode=log,
xmin=1,
xmax=1000000,
xminorticks=true,
xlabel style={font=\color{white!15!black}},
xlabel={$\#\mathcal{I}$},
ymode=log,
ymin=1e-17,
ymax=1e-05,
ytick={1e-5,1e-7,1e-9,1e-11,1e-13,1e-15,1e-17},
yminorticks=true,
ylabel style={font=\color{white!15!black}},
ylabel={error},
axis background/.style={fill=white},
title={truncation error $10^{-12}$, new multiplication},
xmajorgrids,
ymajorgrids,
legend style={legend cell align=left, align=left, draw=white!15!black}
]
\addplot [color=mycolor1]
  table[row sep=crcr]{%
6	2.11533e-16\\
24	3.11013e-16\\
96.0000000000001	2.81138e-16\\
384.000000000001	2.69669e-16\\
1536	2.03682e-13\\
6144.00000000001	2.96182e-13\\
24576.0000000001	3.41823e-13\\
98304.0000000002	4.02508999999999e-13\\
393216.000000001	4.49078e-13\\
};
\addlegendentry{ACA}

\addplot [color=mycolor2]
  table[row sep=crcr]{%
6	2.33112e-16\\
24	3.14061e-16\\
96.0000000000001	2.25897e-08\\
384.000000000001	2.6636e-10\\
1536	9.23559000000001e-12\\
6144.00000000001	3.39399e-13\\
24576.0000000001	3.75521e-13\\
98304.0000000002	3.89015999999999e-13\\
393216.000000001	4.85945000000001e-13\\
};
\addlegendentry{BiLanczos}

\addplot [color=mycolor3]
  table[row sep=crcr]{%
6	1.82161e-16\\
24	2.95109000000001e-16\\
96.0000000000001	2.89143e-16\\
384.000000000001	3.20657000000001e-16\\
1536	1.32804e-13\\
6144.00000000001	1.24925e-13\\
24576.0000000001	1.50893e-13\\
98304.0000000002	8.53253000000002e-14\\
393216.000000001	1.72406e-14\\
};
\addlegendentry{Randomized}

\addplot [color=mycolor4]
  table[row sep=crcr]{%
6	1.91094e-16\\
24	2.87213e-16\\
96.0000000000001	1.73992e-15\\
384.000000000001	3.26057e-12\\
1536	4.40628999999999e-14\\
6144.00000000001	6.30798000000001e-12\\
};
\addlegendentry{SVD}

\end{axis}
\end{tikzpicture}
\caption{\label{fig:eseeer}Error using $\varepsilon$-rank truncation
for the product of the matrices occurring from the exponential kernels.}
\end{figure}
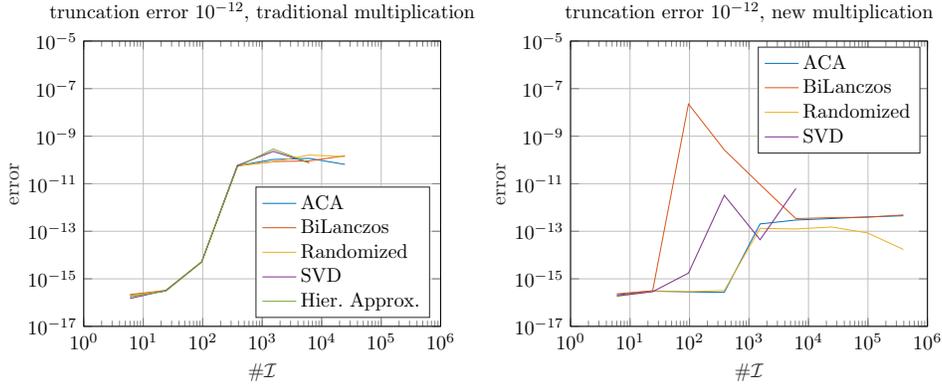
\FloatBarrier
Since the approximation quality of the low-rank methods crucially depends on the
decay of the singular values of the off-diagonal blocks, we repeat the
experiments on a different example.

\subsection{Example with matrices from the boundary element method}
The second example is concerned with the discretized boundary
integral operator
\[
[\mathbf{V}]_{ij}
=
\int_\Gamma\int_\Gamma \frac{\varphi_j(\mathbf{x})\varphi_i(\mathbf{y})}{\|\mathbf{x}-\mathbf{y}\|}\de\sigma_\mathbf{x}\de\sigma_\mathbf{y},
\]
which is frequently used in boundary element methods, see also \cite{Ste08}.
The computation times for the operation $\mathbf{V}\mathbf{V}$ can be
found in Figures~\ref{fig:vvtfr} and \ref{fig:vvter} and the corresponding
accuracies can be found in Figures~\ref{fig:vvefr} and \ref{fig:vveer}. We
can see that the 
behaviour of the traditional and the new $\mathcal{H}$-matrix multiplication
is in large parts the same as in the previous example, i.e., the new
multiplication with $\varepsilon$-rank truncation can reach higher accuracies
in shorter computation time than the traditional multiplication with
an upper bound on the used rank. However,
we shortly comment on the right figure of Figure~\ref{fig:vveer}. There,
we see, as in the previous example, that the ACA and the randomized
low-rank approximation method are more robust on small matrix sizes than
the BiLanczos algorithm. We also see that the ACA and the
randomized low-rank approximation method are even more robust than assembling
the full matrix and applying a dense SVD to obtain the low-rank blocks.
\begin{figure}
\scalefig{
%
%
\definecolor{mycolor1}{rgb}{0.00000,0.44700,0.74100}%
\definecolor{mycolor2}{rgb}{0.85000,0.32500,0.09800}%
\definecolor{mycolor3}{rgb}{0.92900,0.69400,0.12500}%
\definecolor{mycolor4}{rgb}{0.49400,0.18400,0.55600}%
\definecolor{mycolor5}{rgb}{0.46600,0.67400,0.18800}%
\begin{tikzpicture}

\begin{axis}[%
width=  0.5\textwidth,
height=       0.4\textwidth,
at={(         0\textwidth,         0\textwidth)},
scale only axis,
xmode=log,
xmin=1,
xmax=1000000,
xminorticks=true,
xlabel style={font=\color{white!15!black}},
xlabel={$\#\mathcal{I}$},
ymode=log,
ymin=1e-07,
ymax=1,
yminorticks=true,
ylabel style={font=\color{white!15!black}},
ylabel={s / $\#\mathcal{I}$},
axis background/.style={fill=white},
title={truncation to rank 16, traditional multiplication},
xmajorgrids,
ymajorgrids,
legend style={at={(      0.97,      0.03)}, anchor=south east, legend cell align=left, align=left, draw=white!15!black}
]
\addplot [color=mycolor1]
  table[row sep=crcr]{%
6	3.5e-06\\
24	8.75e-07\\
96.0000000000001	8.17708333333333e-06\\
384.000000000001	0.0001325\\
1536	0.00142408854166667\\
6144	0.00560195312499999\\
24576	0.0148797200520833\\
98304	0.0283530680338542\\
393216	0.0509254455566406\\
};
\addlegendentry{ACA}

\addplot [color=mycolor2]
  table[row sep=crcr]{%
6	4.83333333333334e-06\\
24	1.29166666666667e-06\\
96.0000000000001	8.04166666666666e-06\\
384.000000000001	0.0002073046875\\
1536	0.00156171223958333\\
6144	0.00597550455729167\\
24576	0.0152289632161458\\
98304	0.0283121744791666\\
393216	0.0490259806315104\\
};
\addlegendentry{BiLanczos}

\addplot [color=mycolor3]
  table[row sep=crcr]{%
6	2.16666666666667e-06\\
24	1.45833333333333e-06\\
96.0000000000001	1.09375e-05\\
384.000000000001	0.0002015390625\\
1536	0.0015446484375\\
6144	0.00618435872395833\\
24576	0.0147998453776042\\
98304	0.0276812744140625\\
393216	0.0493700663248698\\
};
\addlegendentry{Randomized}

\addplot [color=mycolor4]
  table[row sep=crcr]{%
6	4.5e-06\\
24	1.83333333333333e-06\\
96.0000000000001	1.06041666666667e-05\\
384.000000000001	0.000876302083333332\\
1536	0.0256916015625\\
6144	0.851274414062499\\
};
\addlegendentry{SVD}

\addplot [color=mycolor5]
  table[row sep=crcr]{%
6	2.66666666666667e-06\\
24	1.16666666666667e-06\\
96.0000000000001	8.35416666666667e-06\\
384.000000000001	0.00125895833333333\\
1536	0.00867819010416667\\
6144	0.0206385091145833\\
24576	0.0370115559895833\\
98304	0.0596094767252604\\
393216	0.0929964701334635\\
};
\addlegendentry{Hier.~Approx.}

\addplot [color=black, dashed, forget plot]
  table[row sep=crcr]{%
6	0.0016052009977842\\
6.71123680053168	0.00181219534288484\\
7.50678323213512	0.00203173901678489\\
8.39663331351989	0.00226383201948436\\
10.5052843607197	0.00276566601128153\\
13.1434820813104	0.00331769731827637\\
16.4442118166415	0.00391992594046886\\
20.5738555884738	0.00457235187785901\\
25.7405790253207	0.00527497513044681\\
32.204824502121	0.00602779569823228\\
40.2924394277295	0.00683081358121539\\
50.4111014463166	0.00768402877939617\\
63.0708685084453	0.0085874412927746\\
88.2638259384407	0.0100366800288379\\
123.519830208906	0.0115988627240958\\
172.858453535403	0.0132739893785486\\
241.904841579816	0.015062059992196\\
338.531041918473	0.0169630745650382\\
473.753504038874	0.0189770330970751\\
741.579447237127	0.0218380017429827\\
1160.81479477437	0.024899759649681\\
1817.0554655841	0.0281623068171699\\
2844.28711614656	0.0316256432454495\\
4980.0074931085	0.0362371736787858\\
8719.39913893663	0.0411624373321074\\
15266.627901524	0.0464014342054144\\
29898.6097837907	0.0531023583037634\\
58554.3102753	0.0602550582388914\\
114674.47070649	0.0678595340107983\\
251203.641819084	0.0773024168717296\\
393216	0.0829744351687061\\
};
\addplot [color=black, dashed, forget plot]
  table[row sep=crcr]{%
6	0.000223265685105974\\
6.71123680053168	0.000267815286716958\\
7.50678323213512	0.000317928695955608\\
8.39663331351989	0.000373933302401214\\
9.39196574904416	0.000436156495633068\\
10.5052843607197	0.000504925665230462\\
11.750575166952	0.000580568200772688\\
14.7015034385365	0.0007537829280088\\
18.3934999165971	0.000958419793975735\\
23.0126694590316	0.00119709791530783\\
28.7918535369543	0.00147243640863942\\
36.0223672255499	0.00178705439060482\\
45.0686837117619	0.00214357097783839\\
56.3868065303093	0.00254460528697445\\
70.5472556292287	0.00299277643464732\\
88.2638259384407	0.00349070353749135\\
110.42956809313	0.00404100571214088\\
138.161805015572	0.00464630207523021\\
193.349002441632	0.0056630900408265\\
270.580112506207	0.00681834746148049\\
378.660331107598	0.00812091385583309\\
529.911991781088	0.00957962874252515\\
741.579447237127	0.0112033316401975\\
1037.79511521549	0.0130008620674912\\
1452.33083950443	0.0149810595430469\\
2273.37358348436	0.0179207497658958\\
3558.57447181116	0.0212218490407579\\
5570.33492577888	0.024905310300708\\
8719.39913893663	0.0289920864788207\\
13648.7163441897	0.0335031305081706\\
23897.2743924192	0.0397704826946413\\
41841.2771564182	0.0467744160540108\\
73259.0857573098	0.0545558542836906\\
128267.921314482	0.0631557210810921\\
251203.641819084	0.0746135075133219\\
393216	0.0829744351687061\\
};
\addplot [color=black, dashed, forget plot]
  table[row sep=crcr]{%
6	3.10537846753452e-05\\
6.71123680053168	3.95790818472734e-05\\
7.50678323213512	4.97498226282944e-05\\
8.39663331351989	6.17652340991831e-05\\
9.39196574904416	7.58359313533958e-05\\
10.5052843607197	9.21839174970691e-05\\
11.750575166952	0.000111042583649021\\
13.1434820813104	0.00013265670894075\\
16.4442118166415	0.000185187393532941\\
20.5738555884738	0.000251961964579249\\
25.7405790253207	0.000335348623588181\\
32.204824502121	0.000437897780271133\\
40.2924394277295	0.000562342052542401\\
50.4111014463166	0.000711596266519174\\
63.0708685084453	0.000888757456521537\\
78.909889692566	0.00109710486507246\\
98.7265727989643	0.00134009994289783\\
123.519830208906	0.00162138634892641\\
154.539431708065	0.00194478995028987\\
193.349002441632	0.00231431882232276\\
241.904841579816	0.00273416324856254\\
302.654534757277	0.00320869572074955\\
423.54652483847	0.00403304774386247\\
592.727476714046	0.00500687945344094\\
829.485879469291	0.00614725408167652\\
1160.81479477437	0.00747215728978795\\
1624.48936277154	0.00900049716802108\\
2273.37358348436	0.010752104235649\\
3181.44739419349	0.0127477314409718\\
4452.24119588268	0.015009054161317\\
6969.21613636892	0.0184766460533132\\
10909.1065417438	0.0225129817791119\\
17076.3258321219	0.0271770927554179\\
26730.0445558012	0.0325309257301824\\
41841.2771564182	0.0386393427826031\\
65495.3069915573	0.0455701213231238\\
114674.47070649	0.0554980138978931\\
200781.320614451	0.0669659947501943\\
351544.144562608	0.0801256589061995\\
393216	0.0829744351687061\\
};
\end{axis}
\end{tikzpicture}
\scalefig{
%
%
\definecolor{mycolor1}{rgb}{0.00000,0.44700,0.74100}%
\definecolor{mycolor2}{rgb}{0.85000,0.32500,0.09800}%
\definecolor{mycolor3}{rgb}{0.92900,0.69400,0.12500}%
\definecolor{mycolor4}{rgb}{0.49400,0.18400,0.55600}%
\begin{tikzpicture}

\begin{axis}[%
width=  0.5\textwidth,
height=       0.4\textwidth,
at={(         0\textwidth,         0\textwidth)},
scale only axis,
xmode=log,
xmin=1,
xmax=1000000,
xminorticks=true,
xlabel style={font=\color{white!15!black}},
xlabel={$\#\mathcal{I}$},
ymode=log,
ymin=1e-07,
ymax=1,
yminorticks=true,
ylabel style={font=\color{white!15!black}},
ylabel={s / $\#\mathcal{I}$},
axis background/.style={fill=white},
title={truncation to rank 16, new multiplication},
xmajorgrids,
ymajorgrids,
legend style={at={(      0.97,      0.03)}, anchor=south east, legend cell align=left, align=left, draw=white!15!black}
]
\addplot [color=mycolor1]
  table[row sep=crcr]{%
6	2.5e-06\\
24	1.33333333333333e-06\\
96.0000000000001	2.65625e-05\\
384.000000000001	0.000198291666666667\\
1536	0.000835032552083333\\
6144	0.00208678385416667\\
24576	0.00494095865885417\\
98304	0.00905853271484374\\
393216	0.0190629069010417\\
};
\addlegendentry{ACA}

\addplot [color=mycolor2]
  table[row sep=crcr]{%
6	4.16666666666666e-06\\
24	1.70833333333333e-06\\
96.0000000000001	2.74791666666667e-05\\
384.000000000001	0.0002451953125\\
1536	0.0008428125\\
6144	0.00188037109375\\
24576	0.00406024576822917\\
98304	0.00762589518229166\\
393216	0.0141360473632812\\
};
\addlegendentry{BiLanczos}

\addplot [color=mycolor3]
  table[row sep=crcr]{%
6	5.5e-06\\
24	1.70833333333333e-06\\
96.0000000000001	1.71354166666667e-05\\
384.000000000001	0.0002895703125\\
1536	0.00081376953125\\
6144	0.002156640625\\
24576	0.00467451985677083\\
98304	0.00863125610351562\\
393216	0.0157177734375\\
};
\addlegendentry{Randomized}

\addplot [color=mycolor4]
  table[row sep=crcr]{%
6	6e-06\\
24	1.95833333333333e-06\\
96.0000000000001	1.66979166666667e-05\\
384.000000000001	0.000316888020833333\\
1536	0.00706790364583332\\
6144	0.270257161458334\\
};
\addlegendentry{SVD}

\addplot [color=black, dashed, forget plot]
  table[row sep=crcr]{%
6	0.0016052009977842\\
6.71123680053168	0.00181219534288484\\
7.50678323213512	0.00203173901678489\\
8.39663331351989	0.00226383201948436\\
10.5052843607197	0.00276566601128153\\
13.1434820813104	0.00331769731827637\\
16.4442118166415	0.00391992594046886\\
20.5738555884738	0.00457235187785901\\
25.7405790253207	0.00527497513044681\\
32.204824502121	0.00602779569823228\\
40.2924394277295	0.00683081358121539\\
50.4111014463166	0.00768402877939617\\
63.0708685084453	0.0085874412927746\\
88.2638259384407	0.0100366800288379\\
123.519830208906	0.0115988627240958\\
172.858453535403	0.0132739893785486\\
241.904841579816	0.015062059992196\\
338.531041918473	0.0169630745650382\\
473.753504038874	0.0189770330970751\\
741.579447237127	0.0218380017429827\\
1160.81479477437	0.024899759649681\\
1817.0554655841	0.0281623068171699\\
2844.28711614656	0.0316256432454495\\
4980.0074931085	0.0362371736787858\\
8719.39913893663	0.0411624373321074\\
15266.627901524	0.0464014342054144\\
29898.6097837907	0.0531023583037634\\
58554.3102753	0.0602550582388914\\
114674.47070649	0.0678595340107983\\
251203.641819084	0.0773024168717296\\
393216	0.0829744351687061\\
};
\addplot [color=black, dashed, forget plot]
  table[row sep=crcr]{%
6	0.000223265685105974\\
6.71123680053168	0.000267815286716958\\
7.50678323213512	0.000317928695955608\\
8.39663331351989	0.000373933302401214\\
9.39196574904416	0.000436156495633068\\
10.5052843607197	0.000504925665230462\\
11.750575166952	0.000580568200772688\\
14.7015034385365	0.0007537829280088\\
18.3934999165971	0.000958419793975735\\
23.0126694590316	0.00119709791530783\\
28.7918535369543	0.00147243640863942\\
36.0223672255499	0.00178705439060482\\
45.0686837117619	0.00214357097783839\\
56.3868065303093	0.00254460528697445\\
70.5472556292287	0.00299277643464732\\
88.2638259384407	0.00349070353749135\\
110.42956809313	0.00404100571214088\\
138.161805015572	0.00464630207523021\\
193.349002441632	0.0056630900408265\\
270.580112506207	0.00681834746148049\\
378.660331107598	0.00812091385583309\\
529.911991781088	0.00957962874252515\\
741.579447237127	0.0112033316401975\\
1037.79511521549	0.0130008620674912\\
1452.33083950443	0.0149810595430469\\
2273.37358348436	0.0179207497658958\\
3558.57447181116	0.0212218490407579\\
5570.33492577888	0.024905310300708\\
8719.39913893663	0.0289920864788207\\
13648.7163441897	0.0335031305081706\\
23897.2743924192	0.0397704826946413\\
41841.2771564182	0.0467744160540108\\
73259.0857573098	0.0545558542836906\\
128267.921314482	0.0631557210810921\\
251203.641819084	0.0746135075133219\\
393216	0.0829744351687061\\
};
\addplot [color=black, dashed, forget plot]
  table[row sep=crcr]{%
6	3.10537846753452e-05\\
6.71123680053168	3.95790818472734e-05\\
7.50678323213512	4.97498226282944e-05\\
8.39663331351989	6.17652340991831e-05\\
9.39196574904416	7.58359313533958e-05\\
10.5052843607197	9.21839174970691e-05\\
11.750575166952	0.000111042583649021\\
13.1434820813104	0.00013265670894075\\
16.4442118166415	0.000185187393532941\\
20.5738555884738	0.000251961964579249\\
25.7405790253207	0.000335348623588181\\
32.204824502121	0.000437897780271133\\
40.2924394277295	0.000562342052542401\\
50.4111014463166	0.000711596266519174\\
63.0708685084453	0.000888757456521537\\
78.909889692566	0.00109710486507246\\
98.7265727989643	0.00134009994289783\\
123.519830208906	0.00162138634892641\\
154.539431708065	0.00194478995028987\\
193.349002441632	0.00231431882232276\\
241.904841579816	0.00273416324856254\\
302.654534757277	0.00320869572074955\\
423.54652483847	0.00403304774386247\\
592.727476714046	0.00500687945344094\\
829.485879469291	0.00614725408167652\\
1160.81479477437	0.00747215728978795\\
1624.48936277154	0.00900049716802108\\
2273.37358348436	0.010752104235649\\
3181.44739419349	0.0127477314409718\\
4452.24119588268	0.015009054161317\\
6969.21613636892	0.0184766460533132\\
10909.1065417438	0.0225129817791119\\
17076.3258321219	0.0271770927554179\\
26730.0445558012	0.0325309257301824\\
41841.2771564182	0.0386393427826031\\
65495.3069915573	0.0455701213231238\\
114674.47070649	0.0554980138978931\\
200781.320614451	0.0669659947501943\\
351544.144562608	0.0801256589061995\\
393216	0.0829744351687061\\
};
\end{axis}
\end{tikzpicture}
\caption{\label{fig:vvtfr}Computation times in seconds per degree of freedom
for the product of the boundary integral operator matrices
using fixed rank truncation with corresponding asymptotics $N\log^2N$,
$N\log^3N$, and $N\log^4N$.}
\end{figure}
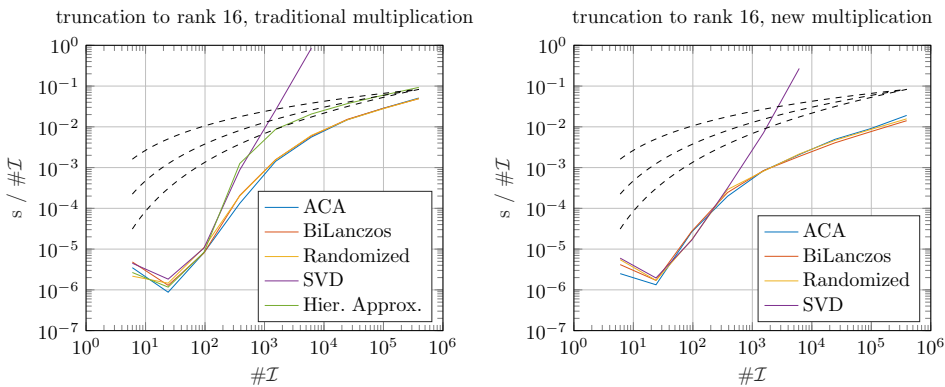
\begin{figure}
\scalefig{
%
%
\definecolor{mycolor1}{rgb}{0.00000,0.44700,0.74100}%
\definecolor{mycolor2}{rgb}{0.85000,0.32500,0.09800}%
\definecolor{mycolor3}{rgb}{0.92900,0.69400,0.12500}%
\definecolor{mycolor4}{rgb}{0.49400,0.18400,0.55600}%
\definecolor{mycolor5}{rgb}{0.46600,0.67400,0.18800}%
\begin{tikzpicture}

\begin{axis}[%
width=  0.5\textwidth,
height=       0.4\textwidth,
at={(         0\textwidth,         0\textwidth)},
scale only axis,
xmode=log,
xmin=1,
xmax=1000000,
xminorticks=true,
xlabel style={font=\color{white!15!black}},
xlabel={$\#\mathcal{I}$},
ymode=log,
ymin=1e-07,
ymax=1,
yminorticks=true,
ylabel style={font=\color{white!15!black}},
ylabel={s / $\#\mathcal{I}$},
axis background/.style={fill=white},
title={truncation error $10^{-12}$, traditional multiplication},
xmajorgrids,
ymajorgrids,
legend style={at={(      0.97,      0.03)}, anchor=south east, legend cell align=left, align=left, draw=white!15!black}
]
\addplot [color=mycolor1]
  table[row sep=crcr]{%
6	3.5e-06\\
24	1.25e-06\\
96.0000000000001	0.000108\\
384.000000000001	0.002918671875\\
1536	0.0342668619791667\\
6144	0.226663411458333\\
24576	1.58812255859375\\
};
\addlegendentry{ACA}

\addplot [color=mycolor2]
  table[row sep=crcr]{%
6	4.5e-06\\
24	1.375e-06\\
96.0000000000001	8.78437499999999e-05\\
384.000000000001	0.00263424479166667\\
1536	0.0340916666666666\\
6144	0.178269856770833\\
24576	1.53498128255208\\
};
\addlegendentry{BiLanczos}

\addplot [color=mycolor3]
  table[row sep=crcr]{%
6	4.66666666666667e-06\\
24	2.04166666666667e-06\\
96.0000000000001	0.00010771875\\
384.000000000001	0.00280489583333333\\
1536	0.0431377604166667\\
6144	0.472513020833334\\
24576	1.96863199869792\\
};
\addlegendentry{Randomized}

\addplot [color=mycolor4]
  table[row sep=crcr]{%
6	2.66666666666667e-06\\
24	1.25e-06\\
96.0000000000001	0.000109083333333333\\
384.000000000001	0.00356653645833333\\
1536	0.0828795572916667\\
6144	2.53533528645833\\
};
\addlegendentry{SVD}

\addplot [color=mycolor5]
  table[row sep=crcr]{%
6	2.5e-06\\
24	1.25e-06\\
96.0000000000001	0.000108697916666667\\
384.000000000001	0.0332708333333333\\
1536	0.64291015625\\
6144	4.93097330729167\\
};
\addlegendentry{Hier.~Approx.}

\addplot [color=black, dashed, forget plot]
  table[row sep=crcr]{%
6	0.0016052009977842\\
6.71123680053168	0.00181219534288484\\
7.50678323213512	0.00203173901678489\\
8.39663331351989	0.00226383201948436\\
10.5052843607197	0.00276566601128153\\
13.1434820813104	0.00331769731827637\\
16.4442118166415	0.00391992594046886\\
20.5738555884738	0.00457235187785901\\
25.7405790253207	0.00527497513044681\\
32.204824502121	0.00602779569823228\\
40.2924394277295	0.00683081358121539\\
50.4111014463166	0.00768402877939617\\
63.0708685084453	0.0085874412927746\\
88.2638259384407	0.0100366800288379\\
123.519830208906	0.0115988627240958\\
172.858453535403	0.0132739893785486\\
241.904841579816	0.015062059992196\\
338.531041918473	0.0169630745650382\\
473.753504038874	0.0189770330970751\\
741.579447237127	0.0218380017429827\\
1160.81479477437	0.024899759649681\\
1817.0554655841	0.0281623068171699\\
2844.28711614656	0.0316256432454495\\
4980.0074931085	0.0362371736787858\\
8719.39913893663	0.0411624373321074\\
15266.627901524	0.0464014342054144\\
29898.6097837907	0.0531023583037634\\
58554.3102753	0.0602550582388914\\
114674.47070649	0.0678595340107983\\
251203.641819084	0.0773024168717296\\
393216	0.0829744351687061\\
};
\addplot [color=black, dashed, forget plot]
  table[row sep=crcr]{%
6	0.000223265685105974\\
6.71123680053168	0.000267815286716958\\
7.50678323213512	0.000317928695955608\\
8.39663331351989	0.000373933302401214\\
9.39196574904416	0.000436156495633068\\
10.5052843607197	0.000504925665230462\\
11.750575166952	0.000580568200772688\\
14.7015034385365	0.0007537829280088\\
18.3934999165971	0.000958419793975735\\
23.0126694590316	0.00119709791530783\\
28.7918535369543	0.00147243640863942\\
36.0223672255499	0.00178705439060482\\
45.0686837117619	0.00214357097783839\\
56.3868065303093	0.00254460528697445\\
70.5472556292287	0.00299277643464732\\
88.2638259384407	0.00349070353749135\\
110.42956809313	0.00404100571214088\\
138.161805015572	0.00464630207523021\\
193.349002441632	0.0056630900408265\\
270.580112506207	0.00681834746148049\\
378.660331107598	0.00812091385583309\\
529.911991781088	0.00957962874252515\\
741.579447237127	0.0112033316401975\\
1037.79511521549	0.0130008620674912\\
1452.33083950443	0.0149810595430469\\
2273.37358348436	0.0179207497658958\\
3558.57447181116	0.0212218490407579\\
5570.33492577888	0.024905310300708\\
8719.39913893663	0.0289920864788207\\
13648.7163441897	0.0335031305081706\\
23897.2743924192	0.0397704826946413\\
41841.2771564182	0.0467744160540108\\
73259.0857573098	0.0545558542836906\\
128267.921314482	0.0631557210810921\\
251203.641819084	0.0746135075133219\\
393216	0.0829744351687061\\
};
\addplot [color=black, dashed, forget plot]
  table[row sep=crcr]{%
6	3.10537846753452e-05\\
6.71123680053168	3.95790818472734e-05\\
7.50678323213512	4.97498226282944e-05\\
8.39663331351989	6.17652340991831e-05\\
9.39196574904416	7.58359313533958e-05\\
10.5052843607197	9.21839174970691e-05\\
11.750575166952	0.000111042583649021\\
13.1434820813104	0.00013265670894075\\
16.4442118166415	0.000185187393532941\\
20.5738555884738	0.000251961964579249\\
25.7405790253207	0.000335348623588181\\
32.204824502121	0.000437897780271133\\
40.2924394277295	0.000562342052542401\\
50.4111014463166	0.000711596266519174\\
63.0708685084453	0.000888757456521537\\
78.909889692566	0.00109710486507246\\
98.7265727989643	0.00134009994289783\\
123.519830208906	0.00162138634892641\\
154.539431708065	0.00194478995028987\\
193.349002441632	0.00231431882232276\\
241.904841579816	0.00273416324856254\\
302.654534757277	0.00320869572074955\\
423.54652483847	0.00403304774386247\\
592.727476714046	0.00500687945344094\\
829.485879469291	0.00614725408167652\\
1160.81479477437	0.00747215728978795\\
1624.48936277154	0.00900049716802108\\
2273.37358348436	0.010752104235649\\
3181.44739419349	0.0127477314409718\\
4452.24119588268	0.015009054161317\\
6969.21613636892	0.0184766460533132\\
10909.1065417438	0.0225129817791119\\
17076.3258321219	0.0271770927554179\\
26730.0445558012	0.0325309257301824\\
41841.2771564182	0.0386393427826031\\
65495.3069915573	0.0455701213231238\\
114674.47070649	0.0554980138978931\\
200781.320614451	0.0669659947501943\\
351544.144562608	0.0801256589061995\\
393216	0.0829744351687061\\
};
\end{axis}
\end{tikzpicture}
\scalefig{
%
%
\definecolor{mycolor1}{rgb}{0.00000,0.44700,0.74100}%
\definecolor{mycolor2}{rgb}{0.85000,0.32500,0.09800}%
\definecolor{mycolor3}{rgb}{0.92900,0.69400,0.12500}%
\definecolor{mycolor4}{rgb}{0.49400,0.18400,0.55600}%
\begin{tikzpicture}

\begin{axis}[%
width=  0.495\textwidth,
height=       0.4\textwidth,
at={(         0\textwidth,         0\textwidth)},
scale only axis,
xmode=log,
xmin=1,
xmax=1000000,
xminorticks=true,
xlabel style={font=\color{white!15!black}},
xlabel={$\#\mathcal{I}$},
ymode=log,
ymin=1e-07,
ymax=1,
yminorticks=true,
ylabel style={font=\color{white!15!black}},
ylabel={s / $\#\mathcal{I}$},
axis background/.style={fill=white},
title={truncation error $10^{-12}$, new multiplication},
xmajorgrids,
ymajorgrids,
legend style={at={(      0.97,      0.03)}, anchor=south east, legend cell align=left, align=left, draw=white!15!black}
]
\addplot [color=mycolor1]
  table[row sep=crcr]{%
6	3.33333333333333e-06\\
24	1.54166666666667e-06\\
96.0000000000001	2.45729166666667e-05\\
384.000000000001	0.000291283854166666\\
1536	0.00161681640625\\
6144	0.00466544596354166\\
24576	0.0112059326171875\\
98304	0.0195994059244791\\
393216	0.0356096903483073\\
};
\addlegendentry{ACA}

\addplot [color=mycolor2]
  table[row sep=crcr]{%
6	4.83333333333334e-06\\
24	2.29166666666667e-06\\
96.0000000000001	4.490625e-05\\
384.000000000001	0.000590830729166666\\
1536	0.00216278645833333\\
6144	0.00509912109375\\
24576	0.0104352620442708\\
98304	0.0183473714192708\\
393216	0.031433359781901\\
};
\addlegendentry{BiLanczos}

\addplot [color=mycolor3]
  table[row sep=crcr]{%
6	6e-06\\
24	1.875e-06\\
96.0000000000001	1.92083333333334e-05\\
384.000000000001	0.000166721354166667\\
1536	0.00099318359375\\
6144	0.00321539713541667\\
24576	0.00753470865885416\\
393216	0.0268613179524739\\
};
\addlegendentry{Randomized}

\addplot [color=mycolor4]
  table[row sep=crcr]{%
6	4.16666666666666e-06\\
24	1.45833333333333e-06\\
96.0000000000001	2.75104166666667e-05\\
384.000000000001	0.000374385416666666\\
1536	0.00756126302083333\\
6144	0.265460611979167\\
};
\addlegendentry{SVD}

\addplot [color=black, dashed, forget plot]
  table[row sep=crcr]{%
6	0.0016052009977842\\
6.71123680053168	0.00181219534288484\\
7.50678323213512	0.00203173901678489\\
8.39663331351989	0.00226383201948436\\
10.5052843607197	0.00276566601128153\\
13.1434820813104	0.00331769731827637\\
16.4442118166415	0.00391992594046886\\
20.5738555884738	0.00457235187785901\\
25.7405790253207	0.00527497513044681\\
32.204824502121	0.00602779569823228\\
40.2924394277295	0.00683081358121539\\
50.4111014463166	0.00768402877939617\\
63.0708685084453	0.0085874412927746\\
88.2638259384407	0.0100366800288379\\
123.519830208906	0.0115988627240958\\
172.858453535403	0.0132739893785486\\
241.904841579816	0.015062059992196\\
338.531041918473	0.0169630745650382\\
473.753504038874	0.0189770330970751\\
741.579447237127	0.0218380017429827\\
1160.81479477437	0.024899759649681\\
1817.0554655841	0.0281623068171699\\
2844.28711614656	0.0316256432454495\\
4980.0074931085	0.0362371736787858\\
8719.39913893663	0.0411624373321074\\
15266.627901524	0.0464014342054144\\
29898.6097837907	0.0531023583037634\\
58554.3102753	0.0602550582388914\\
114674.47070649	0.0678595340107983\\
251203.641819084	0.0773024168717296\\
393216	0.0829744351687061\\
};
\addplot [color=black, dashed, forget plot]
  table[row sep=crcr]{%
6	0.000223265685105974\\
6.71123680053168	0.000267815286716958\\
7.50678323213512	0.000317928695955608\\
8.39663331351989	0.000373933302401214\\
9.39196574904416	0.000436156495633068\\
10.5052843607197	0.000504925665230462\\
11.750575166952	0.000580568200772688\\
14.7015034385365	0.0007537829280088\\
18.3934999165971	0.000958419793975735\\
23.0126694590316	0.00119709791530783\\
28.7918535369543	0.00147243640863942\\
36.0223672255499	0.00178705439060482\\
45.0686837117619	0.00214357097783839\\
56.3868065303093	0.00254460528697445\\
70.5472556292287	0.00299277643464732\\
88.2638259384407	0.00349070353749135\\
110.42956809313	0.00404100571214088\\
138.161805015572	0.00464630207523021\\
193.349002441632	0.0056630900408265\\
270.580112506207	0.00681834746148049\\
378.660331107598	0.00812091385583309\\
529.911991781088	0.00957962874252515\\
741.579447237127	0.0112033316401975\\
1037.79511521549	0.0130008620674912\\
1452.33083950443	0.0149810595430469\\
2273.37358348436	0.0179207497658958\\
3558.57447181116	0.0212218490407579\\
5570.33492577888	0.024905310300708\\
8719.39913893663	0.0289920864788207\\
13648.7163441897	0.0335031305081706\\
23897.2743924192	0.0397704826946413\\
41841.2771564182	0.0467744160540108\\
73259.0857573098	0.0545558542836906\\
128267.921314482	0.0631557210810921\\
251203.641819084	0.0746135075133219\\
393216	0.0829744351687061\\
};
\addplot [color=black, dashed, forget plot]
  table[row sep=crcr]{%
6	3.10537846753452e-05\\
6.71123680053168	3.95790818472734e-05\\
7.50678323213512	4.97498226282944e-05\\
8.39663331351989	6.17652340991831e-05\\
9.39196574904416	7.58359313533958e-05\\
10.5052843607197	9.21839174970691e-05\\
11.750575166952	0.000111042583649021\\
13.1434820813104	0.00013265670894075\\
16.4442118166415	0.000185187393532941\\
20.5738555884738	0.000251961964579249\\
25.7405790253207	0.000335348623588181\\
32.204824502121	0.000437897780271133\\
40.2924394277295	0.000562342052542401\\
50.4111014463166	0.000711596266519174\\
63.0708685084453	0.000888757456521537\\
78.909889692566	0.00109710486507246\\
98.7265727989643	0.00134009994289783\\
123.519830208906	0.00162138634892641\\
154.539431708065	0.00194478995028987\\
193.349002441632	0.00231431882232276\\
241.904841579816	0.00273416324856254\\
302.654534757277	0.00320869572074955\\
423.54652483847	0.00403304774386247\\
592.727476714046	0.00500687945344094\\
829.485879469291	0.00614725408167652\\
1160.81479477437	0.00747215728978795\\
1624.48936277154	0.00900049716802108\\
2273.37358348436	0.010752104235649\\
3181.44739419349	0.0127477314409718\\
4452.24119588268	0.015009054161317\\
6969.21613636892	0.0184766460533132\\
10909.1065417438	0.0225129817791119\\
17076.3258321219	0.0271770927554179\\
26730.0445558012	0.0325309257301824\\
41841.2771564182	0.0386393427826031\\
65495.3069915573	0.0455701213231238\\
114674.47070649	0.0554980138978931\\
200781.320614451	0.0669659947501943\\
351544.144562608	0.0801256589061995\\
393216	0.0829744351687061\\
};
\end{axis}
\end{tikzpicture}
\caption{\label{fig:vvter}Computation times in seconds per degree of freedom
	for the product of the boundary integral operator matrices
using $\varepsilon$-rank truncation with corresponding asymptotics $N\log^2N$,
$N\log^3N$, and $N\log^4N$.}
\end{figure}
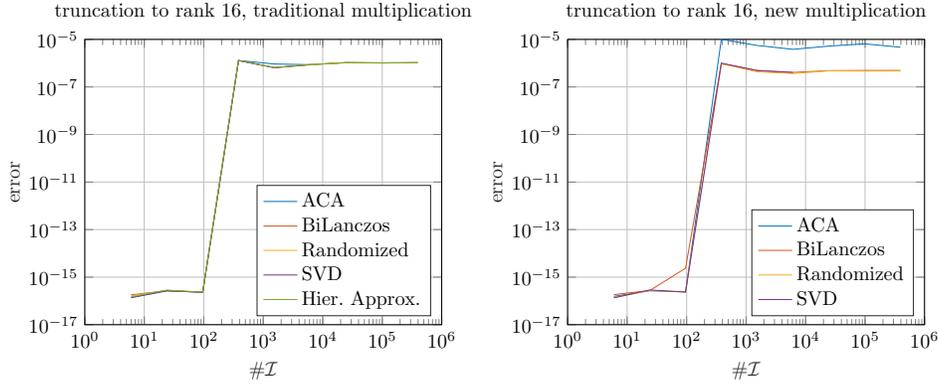
\begin{figure}
\scalefig{
%
%
\definecolor{mycolor1}{rgb}{0.00000,0.44700,0.74100}%
\definecolor{mycolor2}{rgb}{0.85000,0.32500,0.09800}%
\definecolor{mycolor3}{rgb}{0.92900,0.69400,0.12500}%
\definecolor{mycolor4}{rgb}{0.49400,0.18400,0.55600}%
\definecolor{mycolor5}{rgb}{0.46600,0.67400,0.18800}%
\begin{tikzpicture}

\begin{axis}[%
width=  0.5\textwidth,
height=       0.4\textwidth,
at={(         0\textwidth,         0\textwidth)},
scale only axis,
xmode=log,
xmin=1,
xmax=1000000,
xminorticks=true,
xlabel style={font=\color{white!15!black}},
xlabel={$\#\mathcal{I}$},
ymode=log,
ymin=1e-17,
ymax=1e-05,
ytick={1e-5,1e-7,1e-9,1e-11,1e-13,1e-15,1e-17},
yminorticks=true,
ylabel style={font=\color{white!15!black}},
ylabel={error},
axis background/.style={fill=white},
title={truncation to rank 16, traditional multiplication},
xmajorgrids,
ymajorgrids,
legend style={at={(      0.97,      0.03)}, anchor=south east, legend cell align=left, align=left, draw=white!15!black}
]
\addplot [color=mycolor1]
  table[row sep=crcr]{%
6	1.35682e-16\\
24	2.62659e-16\\
96.0000000000001	2.31492e-16\\
384.000000000001	1.29858e-06\\
1536	9.22875000000001e-07\\
6144.00000000001	8.63126e-07\\
24576.0000000001	1.05431e-06\\
98304.0000000002	1.02099e-06\\
393216.000000001	1.05088e-06\\
};
\addlegendentry{ACA}

\addplot [color=mycolor2]
  table[row sep=crcr]{%
6	1.80007e-16\\
24	2.67249e-16\\
96.0000000000001	2.30306e-16\\
384.000000000001	1.28732e-06\\
1536	6.46073e-07\\
6144.00000000001	8.64728999999998e-07\\
24576.0000000001	1.05109e-06\\
98304.0000000002	1.0393e-06\\
393216.000000001	1.04462e-06\\
};
\addlegendentry{BiLanczos}

\addplot [color=mycolor3]
  table[row sep=crcr]{%
6	1.63921e-16\\
24	2.7863e-16\\
96.0000000000001	2.34415e-16\\
384.000000000001	1.2874e-06\\
1536	6.46136999999999e-07\\
6144.00000000001	8.61528e-07\\
24576.0000000001	1.051e-06\\
98304.0000000002	1.03233e-06\\
393216.000000001	1.04428e-06\\
};
\addlegendentry{Randomized}

\addplot [color=mycolor4]
  table[row sep=crcr]{%
6	1.40388e-16\\
24	2.69977e-16\\
96.0000000000001	2.31894e-16\\
384.000000000001	1.28779e-06\\
1536	6.46064999999999e-07\\
6144.00000000001	8.63163999999998e-07\\
};
\addlegendentry{SVD}

\addplot [color=mycolor5]
  table[row sep=crcr]{%
6	1.46999e-16\\
24	2.85084e-16\\
96.0000000000001	2.31204e-16\\
384.000000000001	1.29641e-06\\
1536	6.55112e-07\\
6144.00000000001	8.76726999999998e-07\\
24576.0000000001	1.05384e-06\\
98304.0000000002	1.01298e-06\\
393216.000000001	1.04472e-06\\
};
\addlegendentry{Hier.~Approx.}

\end{axis}
\end{tikzpicture}
\scalefig{
%
%
\definecolor{mycolor1}{rgb}{0.00000,0.44700,0.74100}%
\definecolor{mycolor2}{rgb}{0.85000,0.32500,0.09800}%
\definecolor{mycolor3}{rgb}{0.92900,0.69400,0.12500}%
\definecolor{mycolor4}{rgb}{0.49400,0.18400,0.55600}%
\begin{tikzpicture}

\begin{axis}[%
width=  0.5\textwidth,
height=       0.4\textwidth,
at={(         0\textwidth,         0\textwidth)},
scale only axis,
xmode=log,
xmin=1,
xmax=1000000,
xminorticks=true,
xlabel style={font=\color{white!15!black}},
xlabel={$\#\mathcal{I}$},
ymode=log,
ymin=1e-17,
ymax=1e-05,
ytick={1e-5,1e-7,1e-9,1e-11,1e-13,1e-15,1e-17},
yminorticks=true,
ylabel style={font=\color{white!15!black}},
ylabel={error},
axis background/.style={fill=white},
title={truncation to rank 16, new multiplication},
xmajorgrids,
ymajorgrids,
legend style={at={(      0.97,      0.03)}, anchor=south east, legend cell align=left, align=left, draw=white!15!black}
]
\addplot [color=mycolor1]
  table[row sep=crcr]{%
6	1.55332e-16\\
24	2.66501e-16\\
96.0000000000001	2.38905e-16\\
384.000000000001	1.0089e-05\\
1536	5.56477e-06\\
6144.00000000001	3.81941999999999e-06\\
24576.0000000001	5.22047e-06\\
98304.0000000002	6.50406999999999e-06\\
393216.000000001	4.67848e-06\\
};
\addlegendentry{ACA}

\addplot [color=mycolor2]
  table[row sep=crcr]{%
6	1.84201e-16\\
24	2.72473e-16\\
96.0000000000001	2.37206e-15\\
384.000000000001	9.45520000000001e-07\\
1536	4.90354999999999e-07\\
6144.00000000001	4.08197999999999e-07\\
24576.0000000001	4.83696e-07\\
98304.0000000002	4.93467e-07\\
393216.000000001	4.93854e-07\\
};
\addlegendentry{BiLanczos}

\addplot [color=mycolor3]
  table[row sep=crcr]{%
6	1.35336e-16\\
24	2.74101e-16\\
96.0000000000001	2.40365e-16\\
384.000000000001	9.44006000000001e-07\\
1536	4.28209e-07\\
6144.00000000001	3.65138999999999e-07\\
24576.0000000001	4.74733999999999e-07\\
98304.0000000002	4.66216999999999e-07\\
393216.000000001	4.70131999999999e-07\\
};
\addlegendentry{Randomized}

\addplot [color=mycolor4]
  table[row sep=crcr]{%
6	1.35138e-16\\
24	2.89015e-16\\
96.0000000000001	2.35462e-16\\
384.000000000001	1.0066e-06\\
1536	4.90878999999999e-07\\
6144.00000000001	4.08268e-07\\
};
\addlegendentry{SVD}

\end{axis}
\end{tikzpicture}
\caption{\label{fig:vvefr}Error using fixed rank truncation
	for the product of the boundary integral operator matrices.}
\end{figure}
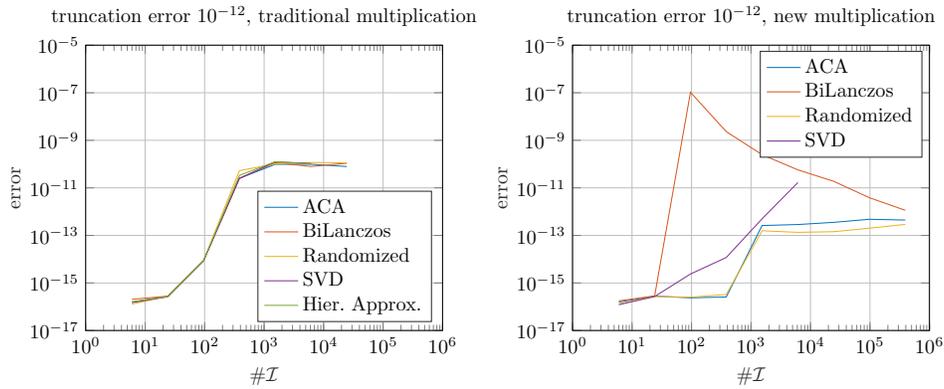
\begin{figure}
\scalefig{
%
%
\definecolor{mycolor1}{rgb}{0.00000,0.44700,0.74100}%
\definecolor{mycolor2}{rgb}{0.85000,0.32500,0.09800}%
\definecolor{mycolor3}{rgb}{0.92900,0.69400,0.12500}%
\definecolor{mycolor4}{rgb}{0.49400,0.18400,0.55600}%
\definecolor{mycolor5}{rgb}{0.46600,0.67400,0.18800}%
\begin{tikzpicture}

\begin{axis}[%
width=  0.5\textwidth,
height=       0.4\textwidth,
at={(         0\textwidth,         0\textwidth)},
scale only axis,
xmode=log,
xmin=1,
xmax=1000000,
xminorticks=true,
xlabel style={font=\color{white!15!black}},
xlabel={$\#\mathcal{I}$},
ymode=log,
ymin=1e-17,
ymax=1e-05,
ytick={1e-5,1e-7,1e-9,1e-11,1e-13,1e-15,1e-17},
yminorticks=true,
ylabel style={font=\color{white!15!black}},
ylabel={error},
axis background/.style={fill=white},
title={truncation error $10^{-12}$, traditional multiplication},
xmajorgrids,
ymajorgrids,
legend style={at={(      0.97,      0.03)}, anchor=south east, legend cell align=left, align=left, draw=white!15!black}
]
\addplot [color=mycolor1]
  table[row sep=crcr]{%
6	1.54901e-16\\
24	2.76458000000001e-16\\
96.0000000000001	8.69513999999999e-15\\
384.000000000001	2.49956e-11\\
1536	9.73781000000001e-11\\
6144.00000000001	9.83599999999999e-11\\
24576.0000000001	7.90079e-11\\
};
\addlegendentry{ACA}

\addplot [color=mycolor2]
  table[row sep=crcr]{%
6	2.05036e-16\\
24	2.78111e-16\\
96.0000000000001	8.70022000000002e-15\\
384.000000000001	2.54506e-11\\
1536	1.13784e-10\\
6144.00000000001	7.92805e-11\\
24576.0000000001	1.08748e-10\\
};
\addlegendentry{BiLanczos}

\addplot [color=mycolor3]
  table[row sep=crcr]{%
6	1.29315e-16\\
24	2.75209e-16\\
96.0000000000001	8.69583e-15\\
384.000000000001	5.30019e-11\\
1536	1.01341e-10\\
6144.00000000001	1.16845e-10\\
24576.0000000001	1.12772e-10\\
};
\addlegendentry{Randomized}

\addplot [color=mycolor4]
  table[row sep=crcr]{%
6	1.49823e-16\\
24	2.62035e-16\\
96.0000000000001	8.69635000000001e-15\\
384.000000000001	2.48418e-11\\
1536	1.24758e-10\\
6144.00000000001	1.05736e-10\\
};
\addlegendentry{SVD}

\addplot [color=mycolor5]
  table[row sep=crcr]{%
6	1.62787e-16\\
24	2.64489e-16\\
96.0000000000001	8.69315000000001e-15\\
384.000000000001	3.32707999999999e-11\\
1536	1.26079e-10\\
6144.00000000001	1.00748e-10\\
};
\addlegendentry{Hier.~Approx.}

\end{axis}
\end{tikzpicture}
\scalefig{
%
%
\definecolor{mycolor1}{rgb}{0.00000,0.44700,0.74100}%
\definecolor{mycolor2}{rgb}{0.85000,0.32500,0.09800}%
\definecolor{mycolor3}{rgb}{0.92900,0.69400,0.12500}%
\definecolor{mycolor4}{rgb}{0.49400,0.18400,0.55600}%
\begin{tikzpicture}

\begin{axis}[%
width=  0.5\textwidth,
height=       0.4\textwidth,
at={(         0\textwidth,         0\textwidth)},
scale only axis,
xmode=log,
xmin=1,
xmax=1000000,
xminorticks=true,
xlabel style={font=\color{white!15!black}},
xlabel={$\#\mathcal{I}$},
ymode=log,
ymin=1e-17,
ymax=1e-05,
ytick={1e-5,1e-7,1e-9,1e-11,1e-13,1e-15,1e-17},
yminorticks=true,
ylabel style={font=\color{white!15!black}},
ylabel={error},
axis background/.style={fill=white},
title={truncation error $10^{-12}$, new multiplication},
xmajorgrids,
ymajorgrids,
legend style={legend cell align=left, align=left, draw=white!15!black}
]
\addplot [color=mycolor1]
  table[row sep=crcr]{%
6	1.56625e-16\\
24	2.81015e-16\\
96.0000000000001	2.36584000000001e-16\\
384.000000000001	2.57695e-16\\
1536	2.60614e-13\\
6144.00000000001	2.86716e-13\\
24576.0000000001	3.54386e-13\\
98304.0000000002	4.77121e-13\\
393216.000000001	4.41998000000001e-13\\
};
\addlegendentry{ACA}

\addplot [color=mycolor2]
  table[row sep=crcr]{%
6	1.76429e-16\\
24	2.83583000000001e-16\\
96.0000000000001	1.04978e-07\\
384.000000000001	2.31287e-09\\
1536	2.49396e-10\\
6144.00000000001	5.71738000000001e-11\\
24576.0000000001	1.88509e-11\\
98304.0000000002	3.83590999999999e-12\\
393216.000000001	1.13462e-12\\
};
\addlegendentry{BiLanczos}

\addplot [color=mycolor3]
  table[row sep=crcr]{%
6	1.31978e-16\\
24	2.69053e-16\\
96.0000000000001	2.52348e-16\\
384.000000000001	3.28076000000001e-16\\
1536	1.56857e-13\\
6144.00000000001	1.34304e-13\\
24576.0000000001	1.43547e-13\\
98304.0000000002	2.01016e-13\\
393216.000000001	2.91043e-13\\
};
\addlegendentry{Randomized}

\addplot [color=mycolor4]
  table[row sep=crcr]{%
6	1.2173e-16\\
24	2.68056e-16\\
96.0000000000001	2.37787e-15\\
384.000000000001	1.1766e-14\\
1536	4.99659e-13\\
6144.00000000001	1.67449e-11\\
};
\addlegendentry{SVD}

\end{axis}
\end{tikzpicture}
\caption{\label{fig:vveer}Error using $\varepsilon$-rank truncation
	for the product of the boundary integral operator matrices.}
\end{figure}

\section{Conclusion}\label{sec:conclusion}
The multiplication of hierarchical matrices is a widely used algorithm in
engineering. The recursive scheme of the original implementation
and the required upper threshold for the rank make an a-priori error
analysis of the algorithm difficult. Although several approaches to reduce
the number of error-introducing operations have been made in the literature,
an algorithm providing a fast multiplication of $\mathcal{H}$-matrices with
a-priori error-bounds is still not available nowadays. By introducing
\sumexpr-expressions, which can be seen as a queuing system of low-rank
matrices and $\mathcal{H}$-matrix products, we can postpone the
error-introducing low-rank approximation until the last stage of the
algorithm. We have discussed several adaptive low-rank approximation methods
based on matrix-vector multiplications which make an a-priori error analysis
of the $\mathcal{H}$-matrix multiplication possible. The cost analysis
shows that the cost of our new $\mathcal{H}$-matrix multiplication
algorithm is almost linear with respect to the size of the matrix. In particular,
the numerical experiments show that the new approach can compute the best
approximation of the $\mathcal{H}$-matrix product while being
computationally more efficient than the traditional product.

Parallelization is an important topic on modern computer architectures.
Therefore, we remark that the computation of the low-rank approximations
for the target blocks is easily parallelizeable, once the necessary
\sumexpr-expression is available. We also remark that the computation
of the \sumexpr-expressions does not require concurrent write access,
such that the parallelization on a shared memory machine should be
comparably easy.

\section*{Acknowledgements}
The authors would like to thank Daniel Kressner for many fruitful
discussions on iterative and randomized eigensolvers.

The work of J\"urgen D\"olz is supported by the Swiss National 
Science Foundation (SNSF) through the project 174987 and 
by the Excellence Initiative of the German Federal and State 
Governments and the Graduate School of Computational 
Engineering at TU Darmstadt.

\bibliographystyle{plain}
\bibliography{bibl}
\end{document}